\newtheorem{thm}{Theorem}[section]
\newtheorem{lemma}[thm]{Lemma}
\newtheorem{pro}[thm]{Proposition}
\newtheorem{cor}[thm]{Corollary}
\theoremstyle{definition}
\newtheorem{defn}[thm]{Definition}
\newtheorem{hyp}[thm]{Hypothesis}
\newtheorem{remark}[thm]{Remark}
\theoremstyle{remark}
\newtheorem*{ack}{Acknowledgements}
\numberwithin{equation}{section}
\newcommand{\ol}[1]{\overline{#1}}
\renewcommand{\hat}[1]{\widehat{#1}}
\renewcommand{\tilde}[1]{\widetilde{#1}}
\newcommand{\set}[1]{{\left\{#1\right\}}}
\newcommand{\pa}[1]{{\left(#1\right)}}
\newcommand{\sq}[1]{{\left[#1\right]}}
\newcommand{\gen}[1]{{\left\langle #1\right\rangle}}
\newcommand{\abs}[1]{{\left|#1\right|}}
\newcommand{\norm}[1]{{\left\|#1\right\|}}
\newcommand{\ssm}{\smallsetminus}
\newcommand{\ra}{\rightarrow}
\newcommand{\longra}{\longrightarrow}
\newcommand{\N}{\mathbb{N}}
\newcommand{\R}{\mathbb{R}}
\newcommand{\esssup}{\operatorname{ess\,sup}}
\newcommand{\eqsys}[1]{{\left\{\begin{array}{ll}#1\end{array}\right.}}
\newcommand{\elle}{\operatorname{L}}
\newcommand{\tc}{\, \middle |\,}                                                
\newcommand{\con}{\operatorname{\mathscr{C}}}
\newcommand{\eps}{\varepsilon}
\newcommand{\ball}{\operatorname{\mathscr{B}}}
\newcommand{\sfera}{\operatorname{\mathcal{S}}}
\newcommand{\fcon}{\operatorname{\mathscr{FC}}}
\DeclareMathOperator{\linspan}{\operatorname{span}}
\DeclareMathOperator{\lip}{\operatorname{Lip}}
\DeclareMathOperator{\diver}{\operatorname{div}}
\DeclareMathOperator{\trace}{\operatorname{Tr}}
\DeclareMathOperator{\dom}{\operatorname{dom}}
\begin{document}

\frenchspacing

\title[On the domain of elliptic operators defined in subsets of Wiener spaces]{On the domain of elliptic operators defined in subsets of Wiener spaces}

\author[D. Addona]{{D. Addona}}

\address[D. Addona]{Dipartimento di Matematica e Informatica, Universit\`a di Ferrara, via Machiavelli, 35, 44121 Ferrara, Italy}
\email{\textcolor[rgb]{0.00,0.00,0.84}{d.addona@campus.unimib.it}}

\author[G. Cappa]{{G. Cappa}}

\address[G. Cappa]{Dipartimento di Scienze Matematiche, Fisiche e Informatiche, Universit\`a degli Studi di Parma, Parco Area delle Scienze 53/A, 43124 Parma, Italy}
\email{\textcolor[rgb]{0.00,0.00,0.84}{gianluca.cappa@nemo.unipr.it}}

\author[S. Ferrari]{{S. Ferrari}$^*$}

\address[S. Ferrari]{Dipartimento di Matematica e Fisica ``Ennio de Giorgi'', Universit\`a del Salento, Via per Arnesano snc, 73100 Lecce, Italy.}
\email{\textcolor[rgb]{0.00,0.00,0.84}{simone.ferrari@unisalento.it}}


\begin{abstract}
Let $X$ be a separable Banach space endowed with a non-degenerate centered Gaussian measure $\mu$. The associated Cameron--Martin space is denoted by $H$. Consider two sufficiently regular convex functions $U:X\ra\R$ and $G:X\ra \R$. We let $\nu=e^{-U}\mu$ and $\Omega=G^{-1}(-\infty,0]$. In this paper we study the domain of the the self-adjoint operator associated with the quadratic form
\begin{gather}\label{inabstract}
(\psi,\varphi)\mapsto \int_\Omega\gen{\nabla_H\psi,\nabla_H\varphi}_Hd\nu\qquad\psi,\varphi\in W^{1,2}(\Omega,\nu),
\end{gather}
and we give sharp embedding results for it. In particular we obtain a characterization of the domain of the Ornstein--Uhlenbeck operator on half-spaces, namely if $U\equiv 0$ and $G$ is an affine function, then the domain of the operator defined via \eqref{inabstract} is the space
\[\set{u\in W^{2,2}(\Omega,\mu)\tc \gen{\nabla_H u(x),\nabla_H G(x)}_H=0\text{ for }\rho\text{-a.e. }x\in G^{-1}(0)},\]
where $\rho$ is the Feyel--de La Pradelle Hausdorff--Gauss surface measure.
\end{abstract}

\subjclass[2010]{28C20, 35J15, 46G12, 47A07, 47A30}

\keywords{Domain of operator, elliptic operator, Wiener space, weighted Gaussian measure, maximal regularity, divergence operator.}

\date{\today
\newline \indent $^{*}$ Corresponding author}

\maketitle

\linenumbers

\section{Introduction}
Let $X$ be a separable Banach space with norm $\norm{\cdot}_X$, endowed with a non-degenerate centered Gaussian measure $\mu$. The associated Cameron--Martin space is denoted by $H$, its inner product by $\gen{\cdot,\cdot}_H$ and its norm by $\abs{\cdot}_H$.
The spaces $W^{k,p}(X,\mu)$ for $p\geq 1$ and $k\in\N$ are the classical Sobolev spaces of the Malliavin calculus (see \cite[Chapter 5]{Bog98}).

The aim of this paper is to study the domain of the self-adjoint operator $L_{\nu,\Omega}$ associated with the quadratic form
\[(\psi,\varphi)\mapsto \int_\Omega\gen{\nabla_H\psi,\nabla_H\varphi}_Hd\nu\qquad\psi,\varphi\in W^{1,2}(\Omega,\nu),\]
where $\Omega$ is a convex subset of $X$, $\nu:=e^{-U}\mu$ and $U:X\ra\R$ is a convex function, $\nabla_H \psi$ is the gradient along $H$ of $\psi$ and $W^{1,2}(\Omega,\nu)$ is the Sobolev space on $\Omega$ associated to the measure $\nu$ (see Section \ref{Notations and preliminaries}). These operators arise in Kolmogorov equations in Hilbert spaces corresponding to stochastic variational inequalities with reflection, such as
\[\eqsys{dY(t,x)-Y(t,x)dt-\nabla_H U(Y(t,x))dt+N_\Omega(Y(t,x))dt\ni dW(t,x);\\ Y(0,x)=x,}\]
where $N_\Omega$ is the normal cone to $\Omega$ and $W(t,\cdot)$ is a $X$-valued cylindrical Wiener process (here $X$ is a Hilbert space). This is because, at least formally, the transition semigroup $T(t)f(x):=\mathbb E[f(Y(t,x))]$ is generated by $L_{\nu,\Omega}$.

In the case of the standard Gaussian measure in a convex subset $\Omega\subseteq\R^n$ with sufficiently regular boundary, the operator $L_{\nu,\Omega}$ reads as
\[L_{\nu,\Omega} u(\xi)=\Delta u(\xi)-\gen{\nabla U(\xi)+\xi,\nabla u(\xi)}\qquad u\in\con^2_b(\Omega),\]
so that, if $U$ is sufficiently regular, $L_{\nu,\Omega}$ is an elliptic operator with possibly unbounded coefficients, and its domain in $\elle^2(\Omega,\nu)$ is
\begin{gather*}
D(L_{\nu,\Omega})=\eqsys{\set{u\in W^{2,2}(\R^n,\nu)\tc \gen{\nabla U+\xi,\nabla u}\in \elle^2(\R^n,\nu)}, & \Omega=\R^n;\\ \set{u\in W^{2,2}(\Omega,\nu)\tc \gen{\nabla U+\xi,\nabla u}\in \elle^2(\Omega,\nu),\ \partial u/\partial n=0\text{ at } \partial\Omega}, & \Omega\neq\R^n,}
\end{gather*}
where $\partial/\partial n$ is the exterior normal derivative at the boundary of $\partial\Omega$ (see \cite{DPL04} and \cite{LL06}). In the infinite dimensional case there is a characterization for the Ornstein--Uhlenbeck operator, when $\Omega$ is the whole space and $U\equiv 0$ (see \cite[Section 5.6]{Bog98}). In this case the operator $L_{\mu,X}$ is the infinitesimal generator of the Ornstein--Uhlenbeck semigroup
\[T_t f(x)=\int_Xf\pa{e^{-t}x+\sqrt{1-e^{-2t}}y}d\mu(y),\]
in $\elle^2(X,\mu)$ and its domain is $D(L_{\mu,X})=W^{2,2}(X,\mu)$. Further results were obtained in \cite{CF16}, assuming $U$ has $H$-Lipschitz gradient, and $\Omega$ is the whole space. In this case too the domain is $D(L_{\nu,X})=W^{2,2}(X,\nu)$. We want to point out that in \cite{MV08} the authors study in detail the case of non-symmetric Ornstein--Uhlenbeck operators on the whole space.

This paper is a first attempt to give a characterization of the domain of $L_{\nu,\Omega}$ in a more general setting. In order to state the main results of this paper we need some hypotheses on the set $\Omega$ and on the weighted measure $\nu$.

Throughout the paper we take $\Omega=G^{-1}(-\infty,0]$, where $G$ satisfies the following assumptions.
\begin{hyp}\label{ipotesi dominio}
Let $G:X\ra\R$ be a version of a function belonging to $W^{3,q}(X,\mu)$ for every $q>1$. We fix a version of $\nabla_H G$ and a version of $\nabla_H^2 G$ such that
\begin{enumerate}
\item $G$ is convex and, for every $q>1$, the functions $G$ is $(3,q)$-precise (see Section \ref{Special classes of functions});\label{ipo dominio convessita}

\item for every $q>1$, the functions $\nabla_H G$ and $\nabla_H^2 G$ are $(2,q)$-precise and $(1,q)$-precise, respectively (see Section \ref{Special classes of functions});\label{ipo dominio precisione}

\item $\mu(G^{-1}(-\infty,0])>0$ and $G^{-1}(-\infty,0]$ is closed;\label{ipo dominio convessita e chiusura}

\item $\abs{\nabla_H G}_H^{-1}\in\elle^q(G^{-1}(-\infty,0],\mu)$ for every $q>1$;\label{ipo dominio non degeneratezza}

\item\label{ipo dominio per dini} {for $\rho$-a.e. $x\in G^{-1}(0)$, $G$ is twice differentiable along $H$ at $x$, i.e., for $\rho$-a.e. $x\in G^{-1}(0)$ there exists $\nabla_HG(x)\in H$ and a Hilbert-Schmidt operator $\nabla_H^2G(x)$ such that
\begin{align}
\lim_{t\rightarrow0}& \frac{G(x+th)-G(x)}{t}=\langle \nabla_HG(x),h\rangle_H, \quad \textrm{uniformly with respect to $h\in H$ of norm $1$,} \label{nutella}\\
\lim_{t\rightarrow0}& \frac{\nabla_HG(x+th)-\nabla_HG(x)}{t}=\nabla_H^2G(x)h, \quad \textrm{uniformly with respect to $h\in H$ of norm $1$.} \label{bombolone}
\end{align}
Here $\rho$ is the Feyel--de La Pradelle Hausdorff--Gauss surface measure (see \cite{FP91})};

\item  $\abs{\nabla_H G(x)}_H\neq 0$ for $\rho$-a.e $x\in G^{-1}(0)$.\label{ipo dominio non degeneratezza 2}
\end{enumerate}
\end{hyp}
\noindent Hypotheses \ref{ipotesi dominio}\eqref{ipo dominio convessita}-\eqref{ipo dominio non degeneratezza} are taken from \cite{CL14} and \cite{CF16convex} in order to define traces of Sobolev functions on level sets of $G$ and to get maximal Sobolev regularity estimates for elliptic equations associated to the operator $L_{\nu,\Omega}$. In particular, Hypothesis \ref{ipotesi dominio}\eqref{ipo dominio convessita e chiusura} implies that the distance function $d_H(\cdot,\Omega)$ introduced in Section \ref{Maximal Sobolev regularity} enjoys good properties. Hypotheses \ref{ipotesi dominio}\eqref{ipo dominio per dini}-\eqref{ipo dominio non degeneratezza 2} allow us to prove Lemma \ref{Lemma derivata seconda G} which is generalization of a classical result in differential geometry (see \cite{Lan99}, \cite{BF04} and \cite{Cap16}).

\begin{hyp}\label{ipotesi peso}
$U:X\ra\R\cup\set{+\infty}$ is a proper, convex, lower semicontinuous and twice continuously differentiable along $H$ function belonging to $W^{2,t}(X,\mu)$ for some $t>3$  (see Section \ref{Notations and preliminaries} for the definition of differentiability along $H$). We set
\begin{align*}
\label{pasta}
\nu:=e^{-U}\mu.
\end{align*}
\end{hyp}
\noindent
The assumption $t>3$ may sound strange, but it is helpful to define the weighted Sobolev spaces $W^{1,2}(X,\nu)$. Indeed, let us observe that, by \cite[Lemma 7.5]{AB06}, $e^{-U}$ belongs to $W^{1,r}(X,\mu)$ for every $r<t$. Thus if $U$ satisfies Hypothesis \ref{ipotesi peso}, then it satisfies \cite[Hypothesis 1.1]{Fer15}; namely $e^{-U}\in W^{1,s}(X,\mu)$ for some $s> 1$ and $U\in W^{1,r}(X,\mu)$ for some $r>s'$.
Then following \cite{Fer15} it is possible to define the space $W^{1,2}(X,\nu)$ as the domain of the closure of the gradient operator along $H$ (see Section \ref{Notations and preliminaries} for an in-depth discussion).

From here on, we will denote by $\trace$ the trace operator acting on Sobolev functions (see Section \ref{Traces of Sobolev functions}), by $\rho$ the Feyel--de La Pradelle Hausdorff--Gauss surface measure (see \cite{Fey01}) and by $\fcon^2_b(\Omega)$ the space of the restriction to $\Omega$ of cylindrical twice differentiable functions on $X$ with bounded derivatives (see Section \ref{Special classes of functions}). We remark that, by \cite[Theorem 3.1(2)]{FU00}, $\langle\nabla^2_H U (x)h,h\rangle_H\geq 0$, for $\mu$-a.e. $x\in X$ and every $h\in H$. An important space in our investigation is
\begin{gather*}
W_U^{2,2}(\Omega,\nu)=\bigg\{u\in W^{2,2}(\Omega,\nu)\,\bigg|\,\int_\Omega\gen{\nabla_H^2 U\nabla_H u,\nabla_H u}d\nu<+\infty\bigg\},
\end{gather*}
endowed with the norm
\begin{gather}\label{norma}
\norm{u}_{W^{2,2}_U(\Omega,\nu)}^2=\norm{u}_{W^{2,2}(\Omega,\nu)}^2+\int_\Omega\gen{\nabla_H^2 U\nabla_H u,\nabla_H u}_Hd\nu.
\end{gather}
\noindent We remark that $W_U^{2,2}(\Omega,\nu)$ is a Hilbert space. We will also study the following subspace of $W^{2,2}_U(\Omega,\nu)$ 
\begin{gather*}
W_{U,N}^{2,2}(\Omega,\nu)=\bigg\{u\in W^{2,2}(\Omega,\nu)\,\bigg|\,\int_\Omega\gen{\nabla_H^2 U\nabla_H u,\nabla_H u}d\nu<+\infty,\phantom{aaaaaaaaaaaaaaaaaaaaaaa}\\
\phantom{aaaaaaaaaaaaaaaaaa}\gen{\trace(\nabla_H u),\trace(\nabla_H G)}_H=0\text{ $\rho$-a.e. in }G^{-1}(0)\bigg\}
\end{gather*}
endowed with the norm \eqref{norma}.

Our main results are the following characterizations of the  domain of the self-adjoint operator $L_{\nu,\Omega}$ 
when $\Omega$ is the whole space or a half-space. We recall that by $\norm{\cdot}_{D(L_{\nu,\Omega})}$ we denote the graph norm, i.e. for $u\in D(L_{\nu,\Omega})$
\begin{gather*}
\norm{u}_{D(L_{\nu,\Omega})}^2:=\norm{u}_{\elle^2(X,\nu)}^2+\norm{L_{\nu,\Omega} u}^2_{\elle^2(X,\nu)}.
\end{gather*}

\begin{thm}\label{cor whole space}
Assume that Hypothesis \ref{ipotesi peso} holds and that $\fcon_b^2(X)$ is dense in $W^{2,2}_U(X,\nu)$. Then $D(L_{\nu,X})= W^{2,2}_U(X,\nu)$. Moreover, for every $u\in D(L_{\nu,X})$, it holds
\begin{gather*}
\norm{u}_{D(L_{\nu,X})}\leq\norm{u}_{W_U^{2,2}(X,\nu)}\leq 2\sqrt{2}\norm{u}_{D(L_{\nu,X})}.
\end{gather*}
and fixed any orthornomal basis $\{h_n\,|\,n\in\N\}$ of $H$
\begin{gather*}
L_{\nu,X} u=\sum_{n=1}^{+\infty}\pa{\partial_{nn}u-\partial_nu\partial_nU-\partial_nu\hat{h}_n},
\end{gather*}
where the series converges in $\elle^2(X,\nu)$ (See Section \ref{Notations and preliminaries} for the definition of the $\hat{\cdot}$ operator).
\end{thm}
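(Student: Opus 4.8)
\emph{Sketch.} The plan is to prove the two inclusions separately. The inclusion $W^{2,2}_U(X,\nu)\subseteq D(L_{\nu,X})$, the series formula for $L_{\nu,X}$ and the first inequality will come from an a~priori identity on a core together with the density hypothesis; the reverse inclusion $D(L_{\nu,X})\subseteq W^{2,2}_U(X,\nu)$ and the second inequality will come from the maximal Sobolev regularity estimate of Section~\ref{Maximal Sobolev regularity}. First I would identify $L_{\nu,X}$ on cylindrical functions. Fix an orthonormal basis $\set{h_n\tc n\in\N}$ of $H$ and, for $u\in\fcon^2_b(X)$, put $\Lambda u:=\sum_n\pa{\partial_{nn}u-\partial_nu\,\partial_nU-\partial_nu\,\hat{h}_n}$, a finite sum. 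Since $U\in W^{2,t}(X,\mu)$ with $t>3$ and $e^{-U}\in W^{1,r}(X,\mu)$ for every $r<t$ (by \cite[Lemma~7.5]{AB06}), Hölder's inequality with exponents $t/2$ and $t/(t-2)<t$ gives $\partial_nU,\partial_{nm}U\in\elle^2(X,\nu)$, so $\Lambda u\in\elle^2(X,\nu)$. Using the weighted integration by parts formula $\int_X\partial_n\psi\di\nu=\int_X(\hat{h}_n+\partial_nU)\psi\di\nu$ (which follows from $\partial_n(e^{-U})=-\partial_nU\,e^{-U}$ and the Gaussian formula $\int_X\partial_n(\psi e^{-U})\di\mu=\int_X\hat{h}_n\psi e^{-U}\di\mu$) with $\psi=\varphi\,\partial_nu$ and summing, one gets $\int_X\gen{\nabla_Hu,\nabla_H\varphi}_H\di\nu=-\int_X(\Lambda u)\varphi\di\nu$ for every smooth cylindrical $\varphi$, hence for every $\varphi\in W^{1,2}(X,\nu)$ by density. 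Therefore $u\in D(L_{\nu,X})$ and $L_{\nu,X}u=\Lambda u$ for all $u\in\fcon^2_b(X)$.

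The core step is the a~priori identity
\begin{equation}\label{apriori-sketch}
\int_X\abs{L_{\nu,X}u}^2\di\nu=\int_X\norm{\nabla_H^2u}_{\mathcal H_2}^2\di\nu+\int_X\abs{\nabla_Hu}_H^2\di\nu+\int_X\gen{\nabla_H^2U\,\nabla_Hu,\nabla_Hu}_H\di\nu\qquad(u\in\fcon^2_b(X)),
\end{equation}
with $\norm{\cdot}_{\mathcal H_2}$ the Hilbert--Schmidt norm (I would argue it first for smooth cylindrical $u$, so that the componentwise differentiations below are licit, and then pass to $\fcon^2_b(X)$). It follows from the commutation relation $\partial_j(L_{\nu,X}u)=L_{\nu,X}(\partial_ju)-\partial_ju-(\nabla_H^2U\,\nabla_Hu)_j$, obtained from $\partial_j\hat{h}_k=\delta_{jk}$ and $\partial_j\gen{\nabla_HU,\nabla_Hu}_H=(\nabla_H^2U\,\nabla_Hu)_j+\gen{\nabla_HU,\nabla_H\partial_ju}_H$: integrating by parts against $\nu$,
\[\int_X\abs{L_{\nu,X}u}^2\di\nu=-\int_X\gen{\nabla_Hu,\nabla_H(L_{\nu,X}u)}_H\di\nu=-\sum_j\int_X\partial_ju\,\partial_j(L_{\nu,X}u)\di\nu,\]
and substituting the commutation relation, using once more $-\int_X\partial_ju\,L_{\nu,X}(\partial_ju)\di\nu=\int_X\abs{\nabla_H\partial_ju}_H^2\di\nu$, gives \eqref{apriori-sketch}. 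The hypothesis $U\in W^{2,t}(X,\mu)$, $t>3$, is exactly what keeps all the integrals finite and the manipulations rigorous, and $\gen{\nabla_H^2U\,\nabla_Hu,\nabla_Hu}_H\ge0$ by \cite[Theorem~3.1(2)]{FU00}.

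Adding $\norm{u}_{\elle^2(X,\nu)}^2$ to both sides of \eqref{apriori-sketch} shows $\norm{u}_{W^{2,2}_U(X,\nu)}=\norm{u}_{D(L_{\nu,X})}$ on $\fcon^2_b(X)$. Now let $u\in W^{2,2}_U(X,\nu)$. By the density hypothesis pick $u_k\in\fcon^2_b(X)$ with $u_k\to u$ in $W^{2,2}_U(X,\nu)$; by \eqref{apriori-sketch} applied to $u_k-u_j$ the sequence $L_{\nu,X}u_k=\Lambda u_k$ is Cauchy in $\elle^2(X,\nu)$, so it converges, and closedness of $L_{\nu,X}$ gives $u\in D(L_{\nu,X})$ with $L_{\nu,X}u=\lim_k\Lambda u_k$. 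Passing to the limit extends \eqref{apriori-sketch} and the norm identity to $u$ and shows that $\sum_n\pa{\partial_{nn}u-\partial_nu\,\partial_nU-\partial_nu\,\hat{h}_n}$ converges in $\elle^2(X,\nu)$ to $L_{\nu,X}u$. Thus $W^{2,2}_U(X,\nu)\subseteq D(L_{\nu,X})$, with $\norm{u}_{D(L_{\nu,X})}\le\norm{u}_{W^{2,2}_U(X,\nu)}$ and with the stated series representation.

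Finally, $W^{2,2}_U(X,\nu)$ is complete and, by the previous step, its norm is equivalent on it to $\norm{\cdot}_{D(L_{\nu,X})}$, so it is a closed subspace of $\pa{D(L_{\nu,X}),\norm{\cdot}_{D(L_{\nu,X})}}$. Since $L_{\nu,X}$ is nonpositive, $I-L_{\nu,X}\colon D(L_{\nu,X})\to\elle^2(X,\nu)$ is an isomorphism; hence $D(L_{\nu,X})=W^{2,2}_U(X,\nu)$ is equivalent to the statement that for every $f\in\elle^2(X,\nu)$ the solution $u\in D(L_{\nu,X})$ of $u-L_{\nu,X}u=f$ lies in $W^{2,2}_U(X,\nu)$. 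This is precisely the maximal Sobolev regularity estimate for $L_{\nu,X}$ proved in Section~\ref{Maximal Sobolev regularity}; keeping track of its constant, together with $\norm{f}_{\elle^2(X,\nu)}\le\sqrt2\,\norm{u}_{D(L_{\nu,X})}$, yields $\norm{u}_{W^{2,2}_U(X,\nu)}\le2\sqrt2\,\norm{u}_{D(L_{\nu,X})}$. Combining the two inclusions and the bounds proves the theorem. I expect the \emph{main obstacle} to be the a~priori identity \eqref{apriori-sketch}: one must legitimise the infinite-dimensional integrations by parts and the convergence of the defining series against the weighted measure $\nu$, which is exactly where the integrability assumptions on $U$ ($U\in W^{2,t}(X,\mu)$ with $t>3$, hence $e^{-U}$ Sobolev and $\partial_nU,\partial_{nm}U\in\elle^2(X,\nu)$) are used; the reason the reverse inclusion needs the separate maximal regularity theorem is that $\fcon^2_b(X)$ is in general not a core for $L_{\nu,X}$.
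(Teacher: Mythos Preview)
Your proof is correct and follows the same two-step strategy as the paper (Theorem~\ref{Main theorem} specialized to $\Omega=X$): the a~priori identity on cylindrical functions together with the density hypothesis gives the inclusion $W^{2,2}_U(X,\nu)\subseteq D(L_{\nu,X})$ and the first norm bound, while the maximal Sobolev regularity of Theorem~\ref{Stime tutto spazio} yields the reverse inclusion and the $2\sqrt2$ constant. The only cosmetic difference is that the paper packages the a~priori identity through the divergence framework (Lemma~\ref{Conti per divergenza con Neumann} and Theorem~\ref{divergence for W12}), obtaining $\norm{\diver_{\nu,X}\nabla_H u}_{\elle^2}\le\norm{\nabla_H u}_{Z^{1,2}_U}$, rather than via your Bochner-type commutation relation $\partial_j(L_{\nu,X}u)=L_{\nu,X}(\partial_j u)-\partial_j u-(\nabla_H^2U\,\nabla_H u)_j$, and records it only as an inequality rather than the equality you (correctly) observe.
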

We remark that if the weight $U$ is such that $\nabla_H U$ is Lipschitz continuous, or more generally $H$-Lipschitz (see Section \ref{Notations and preliminaries}), then $\fcon^2_b(X)$ is dense in $W^{2,2}_U(X,\nu)$, so that the assumption of Theorem \ref{cor whole space} is satisfied (see Corollary \ref{corollario lip}). 

When $G=x^*-r$ where $x^*\in X^*\ssm\set{0}$ and $r\in\R$, i.e. if $\Omega$ is a half-space, we want to remark that the Neumann boundary condition: $\gen{\trace(\nabla_H u),\trace(\nabla_H G)}_H=0$ 
for $\rho$-a.e. $x\in G^{-1}(0)$, read 
\[x^*\pa{\trace(\nabla_H u)(x)}=\gen{\trace(\nabla_H u)(x),h_{x^*}}_H=0\] 
for $\rho$-a.e. $x\in G^{-1}(0)$, where $h_{x^*}$ is the unique vector of $H$ such that 
\begin{gather}\label{mora}
x^*(h)=\gen{h_{x^*},h}_H\text{ for every }h\in H. 
\end{gather}
Such an element exists since $x^*$ is a continuous linear functional on $H$.

\begin{thm}\label{cor halfspaces}
Assume that Hypothesis \ref{ipotesi peso} holds and $G$ is an affine function, namely $G=x^*-r$ where $x^*\in X^*\ssm\set{0}$ and $r\in\R$. If the space
\[\mathcal{Z}(\Omega)=\set{u\in\fcon_b^2(\Omega)\tc x^*\pa{\trace(\nabla_H u)(x)}=0\text{ for }\rho\text{-a.e. }x\in G^{-1}(0)},\]
where $h_{x^*}$ is defined in \eqref{mora}, is dense in the space of $W^{2,2}_{U,N}(\Omega,\nu)$, then $D(L_{\nu,\Omega})= W^{2,2}_{U,N}(\Omega,\nu)$. 
Moreover, for every $u\in D(L_{\nu,\Omega})$, it holds
\begin{gather*}
\norm{u}_{D(L_{\nu,\Omega})}\leq\norm{u}_{W_U^{2,2}(\Omega,\nu)}\leq 2\sqrt{2}\norm{u}_{D(L_{\nu,\Omega})}.
\end{gather*}
and fixed any orthornomal basis $\{h_n\,|\, n\in\N\}$ of $H$
\begin{gather*}
L_{\nu,\Omega} u=\sum_{n=1}^{+\infty}\pa{\partial_{nn}u-\partial_nu\partial_nU-\partial_nu\hat{h}_n},
\end{gather*}
where the series converges in $\elle^2(\Omega,\nu)$ (See Section \ref{Notations and preliminaries} for the definition of the $\hat{\cdot}$ operator).
\end{thm}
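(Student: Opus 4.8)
The statement is the case of affine $G$, so the first move is to observe that $G=x^*-r$ satisfies Hypothesis~\ref{ipotesi dominio}: it lies in every $W^{3,q}(X,\mu)$ and is as precise as one wants because it is linear up to a constant, it is convex (affine), $\mu(\{x^*\le r\})=\mu(G^{-1}(-\infty,0])>0$ and $\{x^*\le r\}$ is closed, while $\nabla_H G\equiv h_{x^*}$ and $\nabla_H^2G\equiv 0$ make items \eqref{ipo dominio non degeneratezza}, \eqref{ipo dominio per dini}, \eqref{ipo dominio non degeneratezza 2} immediate. Hence the theorem is the affine instance of the general characterization (the same result that, in the degenerate form $\Omega=X$, gives Theorem~\ref{cor whole space}), and the only genuinely new point is the reformulation of the boundary condition: since $\nabla_H G$ is the constant $h_{x^*}$, one has $\trace(\nabla_H G)\equiv h_{x^*}$ on $G^{-1}(0)$, so $\gen{\trace(\nabla_H u),\trace(\nabla_H G)}_H=\gen{\trace(\nabla_H u),h_{x^*}}_H=x^*(\trace(\nabla_H u))$ and the Neumann condition defining $W^{2,2}_{U,N}(\Omega,\nu)$ is exactly the one defining $\mathcal Z(\Omega)$ and appearing in the statement. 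Below I sketch the two inclusions directly, as they run in parallel to the whole-space case.

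\emph{Step 1: $W^{2,2}_{U,N}(\Omega,\nu)\subseteq D(L_{\nu,\Omega})$ and the formula.} For $u\in\mathcal Z(\Omega)$ and $\varphi\in\fcon^2_b(\Omega)$ I would apply the integration-by-parts (divergence) formula for $\nu=e^{-U}\mu$ on the convex set $\Omega$ together with the Feyel--de La Pradelle surface measure to write
\[
\int_\Omega\gen{\nabla_H u,\nabla_H\varphi}_H\,d\nu=-\int_\Omega\varphi\Big(\sum_{n=1}^{+\infty}\big(\partial_{nn}u-\partial_nu\,\partial_nU-\partial_nu\,\hat h_n\big)\Big)d\nu+\int_{G^{-1}(0)}c\,\varphi\,\gen{\trace(\nabla_H u),\trace(\nabla_H G)}_H\,d\rho,
\]
with $c>0$ a $\rho$-a.e. finite density. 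Because $u\in\mathcal Z(\Omega)$ and $\trace(\nabla_H G)\equiv h_{x^*}$, the boundary integral vanishes, and since $\fcon^2_b(\Omega)$ is dense in $W^{1,2}(\Omega,\nu)$ and $\mathcal Lu:=\sum_n(\partial_{nn}u-\partial_nu\,\partial_nU-\partial_nu\,\hat h_n)\in\elle^2(\Omega,\nu)$ (using $\nabla_HU\in\elle^t(X,\mu)$ with $t>3$, $e^{-U}\in\elle^{(t/2)'}(X,\mu)$ since $(t/2)'<t$, and boundedness of the derivatives of $u$) the identity $\int_\Omega\gen{\nabla_H u,\nabla_H\varphi}_H\,d\nu=-\int_\Omega\varphi\,\mathcal Lu\,d\nu$ extends to all $\varphi\in W^{1,2}(\Omega,\nu)$; thus $u\in D(L_{\nu,\Omega})$ with $L_{\nu,\Omega}u=\mathcal Lu$. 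The same computations show $\mathcal Z(\Omega)\subseteq W^{2,2}_{U,N}(\Omega,\nu)$. Now invoke the maximal Sobolev regularity estimate proved earlier: on $\mathcal Z(\Omega)$ the graph norm of $L_{\nu,\Omega}$ and $\norm{\cdot}_{W^{2,2}_U(\Omega,\nu)}$ are equivalent with the stated constants $1$ and $2\sqrt2$. Since $D(L_{\nu,\Omega})$ is complete for the graph norm, $W^{2,2}_{U,N}(\Omega,\nu)$ is complete for $\norm{\cdot}_{W^{2,2}_U(\Omega,\nu)}$, and $\mathcal Z(\Omega)$ is dense in the latter by hypothesis, passing to the closure yields $W^{2,2}_{U,N}(\Omega,\nu)\subseteq D(L_{\nu,\Omega})$, the two norm inequalities, and — through the continuity of $u\mapsto\mathcal Lu$ from $W^{2,2}_U(\Omega,\nu)$ into $\elle^2(\Omega,\nu)$ (the part $\sum_n(\partial_{nn}u-\partial_nu\,\hat h_n)$ being the Gaussian divergence of $\nabla_H u$, and $\gen{\nabla_H u,\nabla_H U}_H\in\elle^2(\nu)$ being precisely what the extra term in $\norm{\cdot}_{W^{2,2}_U}$ controls) — the series representation of $L_{\nu,\Omega}u$ with convergence in $\elle^2(\Omega,\nu)$.

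\emph{Step 2: $D(L_{\nu,\Omega})\subseteq W^{2,2}_{U,N}(\Omega,\nu)$ by a range argument.} Since $\mathcal E$ is non-negative, $-L_{\nu,\Omega}\ge0$, so $I-L_{\nu,\Omega}\colon D(L_{\nu,\Omega})\to\elle^2(\Omega,\nu)$ is a bijection with $\norm{(I-L_{\nu,\Omega})u}_{\elle^2}\ge\norm{u}_{D(L_{\nu,\Omega})}$. By Step 1, $(I-L_{\nu,\Omega})\big(W^{2,2}_{U,N}(\Omega,\nu)\big)\subseteq\elle^2(\Omega,\nu)$, and combining $\norm{u}_{W^{2,2}_U(\Omega,\nu)}\le2\sqrt2\,\norm{u}_{D(L_{\nu,\Omega})}\le2\sqrt2\,\norm{(I-L_{\nu,\Omega})u}_{\elle^2}$ with completeness of $W^{2,2}_{U,N}(\Omega,\nu)$ and the continuity of $u\mapsto\trace(\nabla_H u)$ from $W^{2,2}_U(\Omega,\nu)$ into $\elle^2(G^{-1}(0),\rho;H)$, this image is a closed subspace of $\elle^2(\Omega,\nu)$. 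It is also dense: the maximal regularity and trace results of the paper give, for $f$ in a dense subclass (e.g.\ cylindrical polynomials restricted to $\Omega$, or $\fcon^2_b(\Omega)$), that the unique $u\in D(L_{\nu,\Omega})$ with $u-L_{\nu,\Omega}u=f$ belongs to $W^{2,2}_U(\Omega,\nu)$ and satisfies $x^*(\trace(\nabla_H u))=0$ $\rho$-a.e.\ on $G^{-1}(0)$, hence $f\in(I-L_{\nu,\Omega})\big(W^{2,2}_{U,N}(\Omega,\nu)\big)$. Therefore $(I-L_{\nu,\Omega})\big(W^{2,2}_{U,N}(\Omega,\nu)\big)=\elle^2(\Omega,\nu)=(I-L_{\nu,\Omega})\big(D(L_{\nu,\Omega})\big)$, and injectivity of $I-L_{\nu,\Omega}$ on $D(L_{\nu,\Omega})$ forces $D(L_{\nu,\Omega})=W^{2,2}_{U,N}(\Omega,\nu)$.

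\emph{Main obstacle.} The hard analytic input is the maximal Sobolev regularity inequality $\norm{u}_{W^{2,2}_U(\Omega,\nu)}\le 2\sqrt2\,\norm{u}_{D(L_{\nu,\Omega})}$, which rests on a Bochner-type identity with a carefully signed treatment of the boundary term — here convexity of $U$ and the flatness of $\partial\Omega$ (second fundamental form $\equiv0$ because $\nabla_H^2 G\equiv0$) are what close the estimate; but this is carried out in the preceding sections, so within the present proof the delicate points are (i) obtaining the correct boundary integrand in the integration by parts and checking it vanishes under the reformulated Neumann condition, which hinges on $\nabla_H G$ being the constant $h_{x^*}$ and on the identification of $\trace(\nabla_H G)$ with $h_{x^*}$ on the hyperplane $G^{-1}(0)$, and (ii) making sure the Neumann trace condition is preserved under all the limits used in Steps 1--2, i.e.\ the continuity of $\nabla_H u\mapsto\trace(\nabla_H u)$ as a map from $W^{2,2}(\Omega,\nu)$ into $\elle^2(G^{-1}(0),\rho;H)$, together with the $\elle^2$-convergence of the partial sums of the series defining $L_{\nu,\Omega}u$.
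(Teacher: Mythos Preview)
Your overall strategy matches the paper's: the result is the affine instance of Theorem~\ref{Main theorem}, which sandwiches $Z^{2,2}_{U,N}(\Omega,\nu)\subseteq D(L_{\nu,\Omega})\subseteq W^{2,2}_{U,N}(\Omega,\nu)$, and the density hypothesis collapses the sandwich. Your verification that affine $G$ satisfies Hypothesis~\ref{ipotesi dominio} and that the Neumann condition reduces to $x^*(\trace(\nabla_Hu))=0$ is correct and is exactly the only affine-specific content.

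Two points deserve correction. First, your Step~2 range argument is unnecessary. Theorem~\ref{Main theorem 1} already gives, for \emph{every} $f\in\elle^2(\Omega,\nu)$ (not just a dense subclass), that the resolvent lands in $W^{2,2}_{U,N}(\Omega,\nu)$ with the stated bound; applying this to $f=\lambda u-L_{\nu,\Omega}u$ for arbitrary $u\in D(L_{\nu,\Omega})$ yields $D(L_{\nu,\Omega})\subseteq W^{2,2}_{U,N}(\Omega,\nu)$ and the inequality $\norm{u}_{W^{2,2}_U}\le 2\sqrt2\norm{u}_{D(L_{\nu,\Omega})}$ directly, without closedness/density of images. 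This is how the paper proceeds in Theorem~\ref{Main theorem}.

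Second, and more substantively, your parenthetical justification for the continuity of $u\mapsto\mathcal Lu$ from $W^{2,2}_U(\Omega,\nu)$ to $\elle^2(\Omega,\nu)$ is wrong on both counts. The extra term in $\norm{\cdot}_{W^{2,2}_U}$ is $\int_\Omega\gen{\nabla_H^2U\,\nabla_Hu,\nabla_Hu}_H\,d\nu$, which involves the \emph{Hessian} of $U$ and does not control $\norm{\gen{\nabla_Hu,\nabla_HU}_H}_{\elle^2(\nu)}$; and the $\mu$-divergence piece is not a priori in $\elle^2(\Omega,\nu)$ since $e^{-U}$ need not be bounded. The correct argument---the one in Theorem~\ref{divergence for W12}---does not split $\mathcal Lu$: one computes $\int_\Omega(\diver_{\nu,\Omega}\nabla_Hu)^2\,d\nu$ via Lemma~\ref{Conti per divergenza con Neumann}, and the cross terms reorganize into $\norm{\nabla_Hu}^2_{\elle^2}+\int\gen{\nabla_H^2U\nabla_Hu,\nabla_Hu}+\int\mathrm{trace}_H((\nabla_H^2u)^2)$ plus a boundary integral; the latter vanishes here because $\nabla_H^2G\equiv0$ (this is precisely where affinity enters, via Lemma~\ref{Lemma derivata seconda G}). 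That identity is what yields $\norm{L_{\nu,\Omega}u}_{\elle^2}\le\norm{u}_{W^{2,2}_U}$ on $\mathcal Z(\Omega)$. Finally, your claim of $\elle^2$-continuity of the trace map is stronger than what the paper establishes (cf.\ Proposition~\ref{trace continuity}); only $\elle^q$ for $q<2$ is available in general, but this already suffices to show $W^{2,2}_{U,N}(\Omega,\nu)$ is closed in $W^{2,2}_U(\Omega,\nu)$.
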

We remark that showing the density of $\mathcal{Z}(\Omega)$ in $W^{2,2}_{U,N}(\Omega)$ is not an easy task. This difficulty can be overcome if $\Omega$ belongs to the class
of \emph{Neumann extension domains}.
\begin{defn}
Let $Z_U^{2,2}(X,\nu)$ be the completion of the space $\fcon^2_b(X)$ with respect to the norm defined in \eqref{norma}. We say that $\Omega$ is a Neumann extension domain if there exists a linear operator $E^{\mathcal N}$ from $W^{2,2}_{U,N}(\Omega,\nu)$ into $Z_U^{2,2}(X,\nu)$ such that for every $\phi\in W^{2,2}_{U,N}(\Omega,\nu)$
\begin{enumerate}
\item $E^{\mathcal N} \phi(x)=\phi(x)$
for $\nu$-a.e $x\in \Omega$;

\item there is $K>0$, independent of $\phi$, such that $\|E^{\mathcal N}\phi\|_{Z^{2,2}_U(X,\nu)}\leq K\|\phi\|_{W^{2,2}_U(\Omega,\nu)}$.
\end{enumerate}
The operator $E^{\mathcal{N}}$ is called \emph{Neumann extension operator}.
\end{defn}

\begin{thm}\label{thm extension}
Assume that Hypothesis \ref{ipotesi peso} holds and that $\Omega$ is a Neumann extension domain satisfying Hypothesis \ref{ipotesi dominio}. Then $D(L_{\nu,\Omega})= W^{2,2}_{U,N}(\Omega,\nu)$. Moreover, for every $u\in D(L_{\nu,\Omega})$, it holds
\begin{gather*}
\norm{u}_{D(L_{\nu,\Omega})}\leq\norm{u}_{W_U^{2,2}(\Omega,\nu)}\leq 2\sqrt{2}\norm{u}_{D(L_{\nu,\Omega})}.
\end{gather*}
and fixed any orthornomal basis $\{h_n\,|\,n\in\N\}$ of $H$
\begin{gather*}
L_{\nu,\Omega} u=\sum_{n=1}^{+\infty}\pa{\partial_{nn}u-\partial_nu\partial_nU-\partial_nu\hat{h}_n},
\end{gather*}
where the series converges in $\elle^2(\Omega,\nu)$ (See Section \ref{Notations and preliminaries} for the definition of the $\hat{\cdot}$ operator).
\end{thm}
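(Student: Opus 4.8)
The plan is to reduce Theorem~\ref{thm extension} to the circle of ideas behind Theorems~\ref{cor whole space} and~\ref{cor halfspaces}: under Hypotheses~\ref{ipotesi dominio} and~\ref{ipotesi peso} the only ingredient in the characterization of $D(L_{\nu,\Omega})$ that is not automatic is a density statement, and the Neumann extension property supplies exactly the approximating sequences it needs. Since the norm in~\eqref{norma} is the norm of $W_U^{2,2}(X,\nu)$, the space $Z_U^{2,2}(X,\nu)$ is the closure of $\fcon^2_b(X)$ inside $W_U^{2,2}(X,\nu)$; hence, given $\phi\in W_{U,N}^{2,2}(\Omega,\nu)$, its extension $E^{\mathcal N}\phi\in Z_U^{2,2}(X,\nu)$ is the $W_U^{2,2}(X,\nu)$-limit of a sequence $(\psi_k)_{k\in\N}\subseteq\fcon^2_b(X)$. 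Since restriction to $\Omega$ is a contraction $W_U^{2,2}(X,\nu)\to W_U^{2,2}(\Omega,\nu)$ and $E^{\mathcal N}\phi=\phi$ $\nu$-a.e.\ on $\Omega$, we get $\psi_k|_\Omega\to\phi$ in $W_U^{2,2}(\Omega,\nu)$, with $\psi_k|_\Omega\in\fcon^2_b(\Omega)$. This takes the place of the density of $\mathcal Z(\Omega)$ assumed in Theorem~\ref{cor halfspaces}; note the $\psi_k$ need not satisfy any boundary condition, the Neumann condition being used only on the limit $\phi$.

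With this sequence at hand I would first prove $W_{U,N}^{2,2}(\Omega,\nu)\subseteq D(L_{\nu,\Omega})$ together with the series representation. For $\psi,\varphi\in\fcon^2_b(\Omega)$ the Gauss--Green formula on $\Omega=G^{-1}(-\infty,0]$ relative to $\nu=e^{-U}\mu$ (available under Hypothesis~\ref{ipotesi dominio}; see Section~\ref{Traces of Sobolev functions}) takes the form
\begin{gather*}
\int_\Omega\gen{\nabla_H\psi,\nabla_H\varphi}_H\di\nu=-\int_\Omega\varphi\sum_{n=1}^{+\infty}\pa{\partial_{nn}\psi-\partial_n\psi\partial_nU-\partial_n\psi\hat{h}_n}\di\nu+B(\psi,\varphi),
\end{gather*}
where $B(\psi,\varphi)$ is an integral over $G^{-1}(0)$ against $\rho$ whose integrand is a multiple of $\gen{\trace(\nabla_H\psi),\trace(\nabla_HG)}_H$, the weight involving $\abs{\trace(\nabla_HG)}_H^{-1}$, $e^{-\trace(U)}$ and $\trace(\varphi)$. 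Writing this for $\psi=\psi_k|_\Omega$ and letting $k\to+\infty$, the left-hand side converges to $\int_\Omega\gen{\nabla_H\phi,\nabla_H\varphi}_H\di\nu$; the interior term converges to $-\int_\Omega\varphi\,(L\phi)\,\di\nu$, where $L\phi:=\sum_n(\partial_{nn}\phi-\partial_n\phi\partial_nU-\partial_n\phi\hat{h}_n)$ lies in $\elle^2(\Omega,\nu)$ because $\phi\in W_U^{2,2}(\Omega,\nu)$ --- here one uses $\psi_k|_\Omega\to\phi$ in $W^{2,2}(\Omega,\nu)$ for the second-order and $\hat{h}_n$-terms, and Hypothesis~\ref{ipotesi peso} (which gives $\nabla_HU\in\elle^t(X,\mu)$ with $t>3$) together with the Gaussian Sobolev embedding for the term $\gen{\nabla_H\psi_k,\nabla_HU}_H$; and $B(\psi_k|_\Omega,\varphi)\to B(\phi,\varphi)$ by continuity of $\trace$ on $W^{1,2}(\Omega,\nu;H)$ (Section~\ref{Traces of Sobolev functions}), the $\rho$-integrability to every power of $\abs{\trace(\nabla_HG)}_H^{-1}$ from Hypothesis~\ref{ipotesi dominio}\eqref{ipo dominio non degeneratezza}, and boundedness of $\varphi$, while $B(\phi,\varphi)=0$ because $\phi\in W_{U,N}^{2,2}(\Omega,\nu)$ means $\gen{\trace(\nabla_H\phi),\trace(\nabla_HG)}_H=0$ $\rho$-a.e.\ on $G^{-1}(0)$. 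By density of $\fcon^2_b(\Omega)$ in $W^{1,2}(\Omega,\nu)$, the resulting identity $\int_\Omega\gen{\nabla_H\phi,\nabla_H\varphi}_H\di\nu=-\int_\Omega(L\phi)\varphi\,\di\nu$ extends to every $\varphi\in W^{1,2}(\Omega,\nu)$, which says exactly that $\phi\in D(L_{\nu,\Omega})$ and $L_{\nu,\Omega}\phi=L\phi$; the elementary estimate $\norm{L\phi}_{\elle^2(\Omega,\nu)}\lesssim\norm{\phi}_{W_U^{2,2}(\Omega,\nu)}$ gives the lower bound in the asserted norm equivalence.

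For the reverse inclusion $D(L_{\nu,\Omega})\subseteq W_{U,N}^{2,2}(\Omega,\nu)$ and the upper bound $\norm{u}_{W_U^{2,2}(\Omega,\nu)}\leq 2\sqrt2\,\norm{u}_{D(L_{\nu,\Omega})}$ I would invoke the maximal Sobolev regularity of Section~\ref{Maximal Sobolev regularity}, valid under Hypothesis~\ref{ipotesi dominio}: for $\lambda>0$ and $f$ in a dense subset of $\elle^2(\Omega,\nu)$, the variational solution of $\lambda u-L_{\nu,\Omega}u=f$ lies in $W_{U,N}^{2,2}(\Omega,\nu)$ with $\norm{u}_{W_U^{2,2}(\Omega,\nu)}$ controlled by $\norm{f}_{\elle^2(\Omega,\nu)}$; by density and completeness of $W_{U,N}^{2,2}(\Omega,\nu)$, $(\lambda-L_{\nu,\Omega})^{-1}$ maps $\elle^2(\Omega,\nu)$ boundedly into $W_{U,N}^{2,2}(\Omega,\nu)$, whence $D(L_{\nu,\Omega})=(\lambda-L_{\nu,\Omega})^{-1}\elle^2(\Omega,\nu)\subseteq W_{U,N}^{2,2}(\Omega,\nu)$ with the asserted constant. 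Combined with the previous step this yields $D(L_{\nu,\Omega})=W_{U,N}^{2,2}(\Omega,\nu)$, the two-sided norm estimate, and the $\elle^2(\Omega,\nu)$-convergent series for $L_{\nu,\Omega}u$. I expect the main obstacle to be the passage to the limit in the boundary term $B(\psi_k|_\Omega,\varphi)$: it rests on the trace theory of Section~\ref{Traces of Sobolev functions} and on Hypothesis~\ref{ipotesi dominio}\eqref{ipo dominio non degeneratezza} to absorb $\abs{\nabla_HG}_H^{-1}$, while the accompanying convergence of $\gen{\nabla_H\psi_k,\nabla_HU}_H$ in $\elle^2(\Omega,\nu)$ is precisely what forces $U\in W^{2,t}(X,\mu)$ with $t>3$ in Hypothesis~\ref{ipotesi peso}. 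Apart from these points the argument is inherited from the theory developed for Theorems~\ref{cor whole space} and~\ref{cor halfspaces}, the one genuinely new ingredient being the extraction of the cylindrical approximants from the Neumann extension operator.
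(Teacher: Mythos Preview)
Your overall architecture is correct and is essentially the paper's: the inclusion $D(L_{\nu,\Omega})\subseteq W^{2,2}_{U,N}(\Omega,\nu)$ with constant $2\sqrt 2$ comes straight from the maximal regularity of Theorem~\ref{Main theorem 1}, and for the reverse inclusion one exploits the Neumann extension to produce cylindrical approximants of any $\phi\in W^{2,2}_{U,N}(\Omega,\nu)$ and then passes to the limit in an integration-by-parts identity. In the paper this second step is packaged as Corollary~\ref{divext} (applied with $S=\{\nabla_H u:u\in W^{2,2}_{U,N}(\Omega,\nu)\}$ and $E_S(\nabla_H u):=\nabla_H(E^{\mathcal N}u)$), which in turn rests on Theorem~\ref{divergence for W12}.

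The gap in your write-up is the justification of the convergence of the ``interior term''. You propose to split $L\psi_k=\sum_n\partial_{nn}\psi_k-\sum_n\partial_n\psi_k\,\hat h_n-\langle\nabla_H\psi_k,\nabla_HU\rangle_H$ and control the first two pieces by $\|\psi_k\|_{W^{2,2}(\Omega,\nu)}$ and the last by a ``Gaussian Sobolev embedding''. Neither works as stated. The Ornstein--Uhlenbeck part $\sum_n(\partial_{nn}\psi_k-\hat h_n\partial_n\psi_k)$ is bounded in $\elle^2(X,\mu)$ by $\|\psi_k\|_{W^{2,2}(X,\mu)}$, but there is no reason for it to be bounded in $\elle^2(X,\nu)$ by $\|\psi_k\|_{W^{2,2}(X,\nu)}$ unless $e^{-U}$ is bounded above, which is not assumed (convex $U$ need not be bounded below). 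And there is no $\elle^p\to\elle^q$ Sobolev embedding in infinite-dimensional Wiener space that would let you estimate $\langle\nabla_H\psi_k,\nabla_HU\rangle_H$ in $\elle^2(\nu)$ from $\nabla_HU\in\elle^t(\mu)$ and $\nabla_H\psi_k\in\elle^2(\nu;H)$. For the same reason, the sentence ``the elementary estimate $\|L\phi\|_{\elle^2(\Omega,\nu)}\lesssim\|\phi\|_{W^{2,2}_U(\Omega,\nu)}$'' is not elementary at all: it is precisely the content of Theorem~\ref{divergence for W12}, whose proof does \emph{not} split into those pieces but instead uses the identity of Lemma~\ref{Conti per divergenza con Neumann} to compute $\|\diver_{\nu}\Phi\|_{\elle^2(\nu)}^2$ globally and bound it by $\|\Phi\|_{Z^{1,2}_U}^2$.

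The fix is simple and keeps your outline intact: since $\psi_k\to E^{\mathcal N}\phi$ in $W^{2,2}_U(X,\nu)$, the gradients $\nabla_H\psi_k$ form a Cauchy sequence in $Z^{1,2}_U(X,\nu;H)$, and Theorem~\ref{divergence for W12} (on the whole space, no boundary term) gives directly that $L_{\nu,X}\psi_k=\diver_{\nu,X}\nabla_H\psi_k$ is Cauchy in $\elle^2(X,\nu)$, hence in $\elle^2(\Omega,\nu)$; its limit restricted to $\Omega$ is the series $\sum_n(\partial_{nn}\phi-\partial_n\phi\,\partial_nU-\partial_n\phi\,\hat h_n)$ by formula~\eqref{Formula divergenza} and the fact that $E^{\mathcal N}\phi=\phi$ on $\Omega$ forces equality of Sobolev derivatives there. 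Your treatment of the boundary term $B(\psi_k|_\Omega,\varphi)\to 0$ via trace continuity and Hypothesis~\ref{ipotesi dominio}\eqref{ipo dominio non degeneratezza} is fine (this is exactly how Corollary~\ref{divext} handles it, with dominated convergence), and with the corrected interior estimate you recover both the inclusion and the lower bound $\|u\|_{D(L_{\nu,\Omega})}\leq\|u\|_{W^{2,2}_U(\Omega,\nu)}$.
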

The characterization of Neumann extension domains is an open problem in Wiener space theory. The only known results are mainly negative (see \cite{BPS14}), but if $\Omega$ is a half-space and $U\equiv 0$, it is known that an extension operator can be constructed (see \cite{BPS14}). Since we were unable to find explicit computations in the literature, we made them in Lemma \ref{Extension}. Applying Theorems \ref{cor halfspaces}, \ref{thm extension} and Lemma \ref{Extension} we get the following characterization of the domain of the Ornstein--Uhlenbeck operator on half-spaces, i.e. $U\equiv 0$ and $G$ is an affine function.

\begin{thm}\label{thm halfspaces}
Assume that Hypothesis \ref{ipotesi peso} holds and $G$ is an affine function, namely $G(x)=x^*(x)-r$ with $x^*\in X^*\ssm\set{0}$ and $r\in\R$. Then
\[D(L_{\mu,\Omega})= \set{u\in W^{2,2}(\Omega,\mu)\tc x^*\pa{\trace(\nabla_H u)(x)}=0\text{ for }\rho\text{-a.e. }x\in G^{-1}(0)},\]
where $h_{x^*}$ is defined in \eqref{mora}. Moreover, for every $u\in D(L_{\mu,\Omega})$, it holds
\begin{gather*}
\norm{u}_{D(L_{\mu,\Omega})}\leq\norm{u}_{W^{2,2}(\Omega,\mu)}\leq 2\sqrt{2}\norm{u}_{D(L_{\mu,\Omega})}.
\end{gather*}
and fixed any orthornomal basis $\{h_n\,|\,n\in\N\}$ of $H$
\begin{gather*}
L_{\mu,\Omega} u=\sum_{n=1}^{+\infty}\pa{\partial_{nn}u-\partial_nu\hat{h}_n},
\end{gather*}
where the series converges in $\elle^2(\Omega,\mu)$ (See Section \ref{Notations and preliminaries} for the definition of the $\hat{\cdot}$ operator). In addition the space
\[\mathcal{Z}(\Omega)=\set{u\in\fcon_b^2(\Omega)\tc x^*\pa{\trace(\nabla_H u)(x)}=0\text{ for }\rho\text{-a.e. }x\in G^{-1}(0)}\]
is dense in $D(L_{\mu,\Omega})$ with respect to the graph norm.
\end{thm}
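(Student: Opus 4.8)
The plan is to specialise Theorems \ref{cor halfspaces} and \ref{thm extension} to the half-space, using Lemma \ref{Extension} as the geometric input, after recording the simplifications forced by $U\equiv 0$. First I would note that $U\equiv 0$ satisfies Hypothesis \ref{ipotesi peso} (it is smooth along $H$ and lies in every $W^{2,t}(X,\mu)$), that then $\nu=\mu$ and $\nabla_H^2U\equiv 0$, so $\norm{\cdot}_{W^{2,2}_U(\cdot,\mu)}=\norm{\cdot}_{W^{2,2}(\cdot,\mu)}$ and in particular $W^{2,2}_U(X,\mu)=Z^{2,2}_U(X,\mu)=W^{2,2}(X,\mu)$, since $\fcon^2_b(X)$ is dense in $W^{2,2}(X,\mu)$ by definition; and that $G=x^*-r$ satisfies Hypothesis \ref{ipotesi dominio}, being affine --- hence convex and smooth along $H$ with $\nabla_H^2G\equiv 0$, so every precision requirement and conditions \eqref{ipo dominio per dini}--\eqref{ipo dominio non degeneratezza 2} hold trivially --- with $\Omega=\set{x^*\le r}$ closed, $\mu(\Omega)\in(0,1)$ because $x^*$ is a non-degenerate Gaussian under $\mu$, and $\abs{\nabla_HG}_H^{-1}=\abs{h_{x^*}}_H^{-1}$ a nonzero constant. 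Since $\nabla_HG\equiv h_{x^*}$ one has $\trace(\nabla_HG)=h_{x^*}$ $\rho$-a.e., so the constraint $\gen{\trace(\nabla_Hu),\trace(\nabla_HG)}_H=0$ reads $x^*(\trace(\nabla_Hu)(x))=0$ for $\rho$-a.e.\ $x\in G^{-1}(0)$; thus $W^{2,2}_{U,N}(\Omega,\mu)$ is exactly the space on the right-hand side of the claimed identity for $D(L_{\mu,\Omega})$.

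Next I would invoke Lemma \ref{Extension}: a half-space with $U\equiv 0$ is a Neumann extension domain. Normalising $\abs{h_{x^*}}_H=1$ and writing $R(x)=x-2(x^*(x)-r)h_{x^*}$ for the affine reflection across $G^{-1}(0)$, the extension $E^{\mathcal N}\phi$ is obtained by gluing $\phi$ on $\Omega$ to $w\cdot(\phi\circ R)$ on $X\ssm\Omega$, where $w=w(x^*(\cdot)-r)$ is an explicit even Gaussian cut-off chosen so that the glued function is $\con^1$ along $h_{x^*}$ across $G^{-1}(0)$ --- which uses precisely the Neumann condition on $\phi$ --- and so that $\phi\mapsto E^{\mathcal N}\phi$ is bounded from $W^{2,2}_{U,N}(\Omega,\mu)$ into $Z^{2,2}_U(X,\mu)=W^{2,2}(X,\mu)$. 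With the first paragraph this verifies all hypotheses of Theorem \ref{thm extension}, giving $D(L_{\mu,\Omega})=W^{2,2}_{U,N}(\Omega,\mu)$, the bound $\norm{u}_{D(L_{\mu,\Omega})}\le\norm{u}_{W^{2,2}(\Omega,\mu)}\le 2\sqrt2\,\norm{u}_{D(L_{\mu,\Omega})}$, and the series representation with the drift term $\partial_nu\,\partial_nU$ absent, i.e.\ $L_{\mu,\Omega}u=\sum_n(\partial_{nn}u-\partial_nu\,\hat h_n)$ in $\elle^2(\Omega,\mu)$. Everything but the density of $\mathcal Z(\Omega)$ is then proved.

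For the density, the norm equivalence lets me replace the graph norm by $\norm{\cdot}_{W^{2,2}(\Omega,\mu)}$, and I would show $\mathcal Z(\Omega)$ dense in $W^{2,2}_{U,N}(\Omega,\mu)$ for that norm --- which is also the hypothesis of Theorem \ref{cor halfspaces}, so this argument simultaneously re-derives the previous paragraph through that theorem. Given $\phi\in W^{2,2}_{U,N}(\Omega,\mu)$, take $E^{\mathcal N}\phi\in W^{2,2}(X,\mu)$ and cylindrical $f_k\in\fcon^2_b(X)$ with $f_k\to E^{\mathcal N}\phi$ in $W^{2,2}(X,\mu)$, and set on $\Omega$
\[
g_k:=(1+w)^{-1}\,\pa{f_k+f_k\circ R}.
\]
Since $R$ is affine and cylinder-preserving, $w$ is a cylindrical function of $x^*$, and $1+w\ge 1$ with bounded $H$-derivatives, each $g_k$ is the restriction to $\Omega$ of a cylindrical $\con^2_b$ function; evaluating at $G^{-1}(0)$, where $R=\mathrm{id}$, $w=1$ and $\partial_{h_{x^*}}(x^*\circ R)=-1$, one gets $\gen{\nabla_Hg_k,h_{x^*}}_H=0$ there, so $g_k\in\mathcal Z(\Omega)$; and since $(E^{\mathcal N}\phi)\circ R=w\cdot\phi$ on $\Omega$ (from the gluing, $R^2=\mathrm{id}$, and $w$ even), $g_k\to(1+w)^{-1}(\phi+w\phi)=\phi$ in $W^{2,2}(\Omega,\mu)$. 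Hence $\mathcal Z(\Omega)$ is dense in $W^{2,2}_{U,N}(\Omega,\mu)=D(L_{\mu,\Omega})$ for the graph norm, which finishes the proof. The main obstacle is concentrated in Lemma \ref{Extension} and in the boundedness claims above: the translation part of $R$ does not preserve $\mu$ --- its Cameron--Martin density is $e^{-2rx^*-2r^2}$ --- so the cut-off $w$ must be tuned to absorb this and keep both $E^{\mathcal N}\phi$ and the modified approximants $g_k$ in $W^{2,2}$ with controlled norms, and one must check that $R$ and $w$ interact with the Feyel--de La Pradelle surface measure $\rho$ on $G^{-1}(0)$ so that the a.e.\ identities for the $g_k$ descend to their traces $\rho$-a.e.
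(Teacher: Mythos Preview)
Your reduction to $U\equiv0$ and the verification that the affine $G$ satisfies Hypothesis \ref{ipotesi dominio} are correct, and the appeal to Lemma \ref{Extension} together with Theorem \ref{thm extension} for the domain characterisation, the norm equivalence and the series formula is exactly the paper's route. One inaccuracy: your description of Lemma \ref{Extension} as a single reflection $w\cdot(\phi\circ R)$ is not what the paper does. The paper builds the extension on \emph{all} of $\fcon^2_b(\Omega)$ (not only on Neumann functions) via a seven-term higher-order reflection $\sum_{j=1}^7 a_j f(T_j(x))A_j(x)$ with Gaussian weights $A_j$; the seven linear constraints on $(a_j)$ simultaneously force $C^2$-matching at the interface and absorb the Cameron--Martin Jacobians so that the operator is $W^{2,2}$-bounded. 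A single even reflection can be made to work for Neumann $\phi$, but that is a different (and weaker) lemma than the one you are citing.

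Where you genuinely diverge from the paper is the density of $\mathcal Z(\Omega)$, and here your argument has a gap you acknowledge but do not close. The paper does not symmetrise: in Proposition \ref{approx in halfspaces} it extends $u$ to $Eu\in W^{2,2}(X,\mu)$, then takes the cylindrical conditional expectations $v_n(x)=\int_X Eu(P_nx+S_ny)\,d\mu(y)$ along a basis whose first vector is $h_{x^*}/|h_{x^*}|_H$, observes that $P_nx+S_ny\in G^{-1}(0)$ whenever $x\in G^{-1}(0)$ so the Neumann condition is inherited automatically, and finally smooths $v_n$ by a finite-dimensional Ornstein--Uhlenbeck resolvent on $\{\xi_1\le r\}$. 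This is measure-preserving at every step, so no Jacobian appears. Your symmetrisation $g_k=(1+w)^{-1}(f_k+f_k\circ R)$ runs into exactly the Jacobian: one computes $\|h\circ R\|_{L^2(\Omega,\mu)}^2=\int_{X\setminus\Omega}|h|^2 e^{2rx^*-2r^2}\,d\mu$, so $(f_k-E^{\mathcal N}\phi)\circ R\to 0$ in $L^2(\Omega,\mu)$ would require $f_k\to E^{\mathcal N}\phi$ in the weighted space $L^2(X\setminus\Omega,e^{2rx^*}\mu)$, which for $r>0$ is strictly stronger than $W^{2,2}(X,\mu)$-convergence. The factor $(1+w)^{-1}$ is bounded by $1$ and cannot absorb this; forcing $w$ large enough to do so would make it unbounded and destroy $g_k\in\fcon^2_b(\Omega)$. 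The paper's conditional-expectation approach is designed precisely to avoid this obstruction.
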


The paper is organized as follows: in Section \ref{Notations and preliminaries} we recall some basic definitions and we fix the notations. Section \ref{Second-order analysis of the Moreau--Yosida approximations along $H$} is dedicated to the study of the second order analysis of the Moreau--Yosida approximations along $H$, that are used to prove Theorems \ref{cor whole space}.
In section \ref{The divergence operator} we will introduce the divergence operator $\diver_{\nu,\Omega}$ as minus the formal adjoint of the gradient operator along $H$ and investigate its properties. Namely, consider the space
\begin{gather}\label{definizione Z(Omega, H)}
\mathcal{Z}(\Omega,H):=\set{\Phi:\Omega\ra H \tc\begin{array}{c}
\text{there exists $n\in\N$ and $\set{h_1,\ldots, h_n}\subseteq H$}\\
\text{such that $\Phi=\sum_{i=1}^{n}\varphi_i h_i$ for some $n\in\N$,}\\
\text{and $\varphi_i\in\fcon_b^2(\Omega)$ for $i=1,\ldots,n$.}\\
\text{In addition $\gen{\Phi,\trace(\nabla_H G)}_H=0$ $\rho$-a.e. in $G^{-1}(0)$.}
\end{array}}.
\end{gather}
For every $\Phi\in\mathcal{Z}(\Omega,H)$ put
\begin{gather}\label{norma divergenza su dominio}
\norm{\Phi}_{Z_U^{1,2}(\Omega,\nu;H)}^2:=\norm{\Phi}^2_{W^{1,2}(\Omega,\nu;H)}+\int_\Omega\gen{\nabla_H^2U\Phi,\Phi}_Hd\nu+\int_{G^{-1}(0)}\gen{\trace(\nabla^2_H G)\Phi,\Phi}_H\frac{e^{-\trace(U)}}{\abs{\trace(\nabla_H G)}_H}d\rho.
\end{gather}
Let $Z_U^{1,2}(\Omega,\nu;H)$ be the completion of the space $\mathcal{Z}(\Omega,H)$ with respect to the norm defined in \eqref{norma divergenza su dominio}. As usual the elements of $Z_U^{1,2}(\Omega,\nu;H)$ can be identified as equivalence classes of vector fields with respect to the $\nu$-a.e. equivalence relation. It is easy to see that $Z_U^{1,2}(\Omega,\nu;H)$ is a Hilbert space. In Proposition \ref{divergence for W12} we will prove that the space $Z_U^{1,2}(\Omega,\nu;H)$ is contained in the domain of the divergence operator $\diver_{\nu,\Omega}$ in $\elle^2$ and $\diver_{\nu,\Omega} \Phi\in\elle^2(\Omega,\nu)$ for every $\Phi\in Z_U^{1,2}(X,\nu;H)$. Furthermore an explicit formula for the calculation of $\diver_{\nu,\Omega}$ is given by (\ref{Formula divergenza}).

We remark that without loss of generality we can assume that the sequence $\set{h_1,\ldots,h_n}$ in \eqref{definizione Z(Omega, H)} is a sequence of orthonormal elements of $H$ (indeed, it is enough to apply the Gram-Schmidt procedure).
Moreover, we stress that the boundary integral in \eqref{norma divergenza su dominio} in general cannot be estimated by the $W^{1,2}$-norm of $\Phi$. This fact depends not only from the presence of the second order derivatives of $G$, but also from the trace theory in infinite dimensions. Indeed, as shown in \cite{Fer15} the trace of $f\in W^{1,p}(\Omega)$ belongs to $\elle^q(G^{-1}(0),e^{-U}\rho)$ for any $q\in [1,p(t-2)(t-1)^{-1}]$, where $t$ is the number fixed in Hypothesis \ref{ipotesi peso}. In particular if $p=2$ then we do not know if the trace operator is continuous in $\elle^2(G^{-1}(0), e^{-U}\rho)$.

In Section \ref{Maximal Sobolev regularity} we obtain maximal Sobolev regularity estimates for the weak solution of the problem
\begin{gather}\label{Problema 1}
\lambda u(x)-L_{\nu,\Omega}u(x)=f(x)\qquad \mu\text{-a.e. }x\in \Omega,
\end{gather}
where $\lambda >0$, and $f\in\elle^2(\Omega,\nu)$. We say that $u\in W^{1,2}(\Omega,\nu)$ is a \emph{weak solution} of problem \eqref{Problema 1} if
\[\lambda\int_\Omega u\varphi d\nu+\int_\Omega\gen{\nabla_H u,\nabla_H\varphi}_Hd\nu=\int_\Omega f\varphi d\nu\qquad\text{for every }\varphi\in W^{1,2}(\Omega,\nu).\]
Notice that the unique weak solution $u$ of problem \eqref{Problema 1} satisfies $u=R(\lambda,L_{\nu,\Omega})f$, where $R(\lambda,L_{\nu,\Omega})$ is the resolvent of $L_{\nu,\Omega}$. We recall that results about existence, uniqueness and regularity of the weak solution of problem \eqref{Problema}, in domains with sufficiently regular boundary, are known in the finite dimensional case (see the classical books \cite{GT01} and \cite{LU68} for a bounded $\Omega$ and \cite{BF04}, \cite{DPL04}, \cite{LMP05}, \cite{DPL07} and \cite{DPL08} for an unbounded $\Omega$).
If $X$ is infinite dimensional maximal Sobolev regularity results are known when $X$ is a separable Hilbert space. See for example \cite{BDPT09} and \cite{BDPT11} where $U\equiv 0$ and \cite{DPL15} where $U$ is bounded from below. When $\Omega=X$ more results are known, see for example \cite{DPG01}, \cite{MPRS02} and \cite{BL07} if $X$ is finite dimensional, \cite{DPL14} if $X$ is a Hilbert space and \cite{CF16} if $X$ is a separable Banach space. If $X$ is general separable Banach space and $\Omega\varsubsetneq X$, then the only results regarding maximal Sobolev regularity are the one contained in \cite{Cap16}, where the second named author studied problem \eqref{Problema} when $U\equiv 0$, namely when $L_{\nu,\Omega}$ is the Ornstein--Uhlenbeck operator on $\Omega$, and in \cite{CF16convex}, where the second and third named authors studied the general case.

In Section \ref{Proof of the main result and some observations} we prove Theorems \ref{cor whole space}, \ref{cor halfspaces} and \ref{thm extension} and some related corollaries.
Finally, in Section \ref{Examples} we provide some examples to which our results can be applied. In particular we study the case when $\Omega$ is the unit ball of a Hilbert space and we prove Theorem \ref{thm halfspaces}.

\section{Notation and preliminaries} \label{Notations and preliminaries}

We will denote by $X^*$ the topological dual of $X$. We recall that $X^*\subseteq\elle^2(X,\mu)$. The linear operator $R_\mu:X^*\ra (X^*)'$
\begin{gather}\label{operatore di covariaza}
R_\mu x^*(y^*)=\int_X x^*(x)y^*(x)d\mu(x)
\end{gather}
is called the covariance operator of $\mu$. Since $X$ is separable, then it is actually possible to prove that $R_\mu:X^*\ra X$ (see \cite[Theorem 3.2.3]{Bog98}). We denote by $X^*_\mu$ the closure of $X^*$ in $\elle^2(X,\mu)$. The covariance operator $R_\mu$ can be extended by continuity to the space $X^*_\mu$, still by formula \eqref{operatore di covariaza}. By \cite[Lemma 2.4.1]{Bog98} for every $h\in H$ there exists a unique $g\in X^*_\mu$ with $h= R_\mu g$, in this case we set
\begin{gather}\label{definizione hat}
\hat{h}:=g.
\end{gather}

Throughout the paper we fix an orthonormal basis $\set{e_i}_{i\in\N}$ of $H$ such that $\hat{e}_i$ belongs to $X^*$, for every $i\in\N$. Such basis exists by \cite[Corollary 3.2.8(ii)]{Bog98}.

\subsection{Differentiability along $H$}

We say that a function $f:X\ra\R$ is \emph{differentiable along $H$ at $x$} if there exists $v\in H$ such that
\[\lim_{t\ra 0}\frac{f(x+th)-f(x)}{t}=\gen{v,h}_H,\]
uniformly with respect to $h\in H$, with $\abs{h}_H=1$. In this case, the vector $v\in H$ is unique and we set $\nabla_H f(x):=v$. Moreover, for every $k\in\N$ the derivative of $f$ in the direction of $e_k$ exists and it is given by
\begin{gather*}
\partial_k f(x):=\lim_{t\ra 0}\frac{f(x+te_k)-f(x)}{t}=\gen{\nabla_H f(x),e_k}_H.
\end{gather*}

We denote by $\mathcal{H}_2$ the space of the Hilbert--Schmidt operators in $H$, that is the space of the bounded linear operators $A:H\ra H$ such that $\norm{A}_{\mathcal{H}_2}^2=\sum_{i}\abs{Ae_i}^2_H$ is finite (see \cite{DU77}).
We say that a function $f:X\ra\R$ is \emph{twice differentiable along $H$ at $x$} if it is differentiable along $H$ at $x$ and there exists $A\in\mathcal{H}_2$ such that
\[H\text{-}\lim_{t\ra 0}\frac{\nabla_Hf(x+th)-\nabla_Hf(x)}{t}=A h,\]
uniformly with respect to $h\in H$, with $\abs{h}_H=1$. In this case the operator $A$ is unique and we set $\nabla_H^2 f(x):=A$. Moreover, for every $i,j\in\N$ we set
\begin{gather*}
\partial_{ij} f(x):=\lim_{t\ra 0}\frac{\partial_jf(x+te_i)-\partial_jf(x)}{t}=\langle\nabla_H^2 f(x)e_j,e_i\rangle_H.
\end{gather*}

\subsection{Special classes of functions}\label{Special classes of functions}

For $k\in\N\cup\set{\infty}$, we denote by $\fcon^k(X)$ ($\fcon_b^k(X)$ respectively) the space of the cylindrical function of the type
\(f(x)=\varphi(x^*_1(x),\ldots,x^*_n(x))\)
where $\varphi\in\con^{k}(\R^n)$ ($\varphi\in\con^{k}_b(\R^n)$, respectively) and $x^*_1,\ldots,x^*_n\in X^*$, for some $n\in\N$. We remark that $\fcon^\infty_b(X)$ is dense in $\elle^p(X,\nu)$ for all $p\geq 1$ (see \cite[Proposition 3.6]{Fer15}). We recall that if $f\in \fcon^2(X)$, then $\partial_{ij}f(x)=\partial_{ji}f(x)$ for every $i,j\in\N$ and $x\in X$.

If $Y$ is a Banach space, a function $F:X\ra Y$ is said to be $H$-Lipschitz if there exists a positive constant $C$ such that
\begin{gather}\label{costante di H-lip}
\norm{F(x+h)-F(x)}_Y\leq C\abs{h}_H,
\end{gather}
for every $h\in H$ and $\mu$-a.e. $x\in X$ (see \cite[Section 4.5 and Section 5.11]{Bog98}). We denote with $[F]_{H\text{-Lip}}$ the best constant $C$ appearing in \eqref{costante di H-lip}.

A function $F:X\ra\R$ is said to be $H$-continuous, if \(\lim_{ \abs{h}_H\ra 0}F(x+h)=F(x)\), for $\mu$-a.e. $x\in X$.

\subsection{Sobolev spaces}\label{sobolev_spaces}
The Gaussian Sobolev spaces $W^{1,p}(X,\mu)$ and $W^{2,p}(X,\mu)$, with $p\geq 1$, are the completions of the smooth cylindrical functions $\fcon_b^\infty(X)$ in the norms
\begin{gather*}
\norm{f}_{W^{1,p}(X,\mu)}:=\norm{f}_{\elle^p(X,\mu)}+\pa{\int_X\abs{\nabla_H f(x)}_H^pd\mu(x)}^{\frac{1}{p}};\\
\norm{f}_{W^{2,p}(X,\mu)}:=\norm{f}_{W^{1,p}(X,\mu)}+\pa{\int_X\norm{\nabla_H^2 f(x)}^p_{\mathcal{H}_2}d\mu(x)}^{\frac{1}{p}}.
\end{gather*}
Such spaces can be identified with subspaces of $\elle^p(X,\mu)$ and the (generalized) gradient and Hessian along $H$, $\nabla_H f$ and $\nabla_H^2 f$, are well defined and belong to $\elle^p(X,\mu;H)$ and $\elle^p(X,\mu;\mathcal{H}_2)$, respectively. The spaces $W^{1,p}(X,\mu;H)$ are defined in a similar way, replacing smooth cylindrical functions with $H$-valued smooth cylindrical functions (i.e. the linear span of the functions $x\mapsto f(x)h$, where $f$ is a smooth cylindrical function and $h\in H$). For more information see \cite[Section 5.2]{Bog98}.

Now we consider $\nabla_H:\fcon^\infty_b(X)\ra\elle^p(X,\nu;H)$. This operator is closable in $\elle^p(X,\nu)$ whenever $p>\frac{t-1}{t-2}$ (see \cite[Definition 4.3]{Fer15}). For such $p$ we denote by $W^{1,p}(X,\nu)$ the domain of its closure in $\elle^p(X,\nu)$. In the same way the operator $(\nabla_H,\nabla^2_H):\fcon^\infty_b(X)\ra \elle^p(X,\nu;H)\times\elle^p(X,\nu;\mathcal{H}_2)$ is closable in $\elle^p(X,\nu)$, whenever $p>\frac{t-1}{t-2}$ (see \cite[Proposition 2.1]{CF16}). For such $p$ we denote by $W^{2,p}(X,\nu)$ the domain of its closure in $\elle^p(X,\nu)$. The spaces $W^{1,p}(X,\nu;H)$ are defined in a similar way, replacing smooth cylindrical functions with $H$-valued smooth cylindrical functions.

We want to point out that if Hypothesis \ref{ipotesi peso} holds, then $\frac{t-1}{t-2}<2$. In particular the above arguments allows us to define the Sobolev spaces $W^{1,2}(X,\nu)$ and $W^{2,2}(X,\nu)$.

We shall use the integration by parts formula (see \cite[Lemma 4.1]{Fer15}) for $\varphi\in W^{1,p}(X,\nu)$ with $p>\frac{t-1}{t-2}$:
\begin{gather*}
\int_X\partial_k\varphi d\nu=\int_X\varphi(\partial_kU+\hat{e}_k)d\nu\qquad\text{ for every }k\in\N,
\end{gather*}
where $\hat{e}_k$ is defined in formula \eqref{definizione hat}.
Finally, we recall that if $U$ satisfies Hypothesis \ref{ipotesi peso} then for every $u\in\fcon^2_b(X)$
\begin{gather}\label{formula Lnu}
L_{\nu,X} u=\sum_{i=1}^{+\infty}\pa{\partial_{ii}u-\pa{\partial_i U+\hat{e}_i}\partial_iu},
\end{gather}
where the series converges in $\elle^2(X,\nu)$ (see \cite[Proposition 5.3]{Fer15}).

\subsection{Capacity}

Let $L_p$ be the infinitesimal generator of the \emph{Ornstein--Uhlenbeck semigroup} $T(t)$ in $\elle^p(X,\mu)$, where
\[T(t)f(x):=\int_Xf\pa{e^{-t}x+(1-e^{-2t})^{\frac{1}{2}}y}d\mu(y)\qquad\text{ for }t>0.\]
For $k=1,2,3$, we define the \emph{$C_{k,p}$-capacity} of an open set $A\subseteq X$ as
\[C_{k,p}(A):=\inf\set{\norm{f}_{\elle^p(X,\mu)}\tc (I-L_p)^{-\frac{k}{2}}f\geq 1\ \mu\text{-a.e. in }A}.\]
For a general Borel set $B\subseteq X$ we let $C_{k,p}(B)=\inf\set{C_{k,p}(A)\tc B\subseteq A\text{ open}}$. By $f\in W^{k,p}(X,\mu)$ we mean an equivalence class of functions and we call every element ``version''. For any $f\in W^{k,p}(X,\mu)$ there exists a version $\ol f$ of $f$ which is Borel measurable and \emph{$C_{k,p}$-quasicontinuous}, i.e. for every $\eps>0$ there exists an open set $A\subseteq X$ such that $C_{k,p}(A)\leq \eps$ and $\ol{f}_{|_{X\ssm A}}$ is continuous. Furthermore, for every $r>0$
\[C_{k,p}\pa{\set{x\in X\tc \abs{\ol{f}(x)}>r}}\leq\frac{1}{r}\norm{(I-L_p)^{-\frac{k}{2}}\ol{f}}_{\elle^p(X,\mu)}.\]
See \cite[Theorem 5.9.6]{Bog98}. Such a version is called a \emph{$(k,p)$-precise version of $f$}. Two precise versions of the same $f$ coincide outside sets with null $C_{k,p}$-capacity. All our results will be independent on our choice of a precise version of $G$ in Hypothesis \ref{ipotesi dominio}. With obvious modification the same definition can be adapted to functions belonging to $W^{k,p}(X,\mu;H)$ and $W^{k,p}(X,\mu;\mathcal{H}_2)$.

\subsection{Sobolev spaces on sublevel sets}

The proof of the results stated in this subsection can be found in \cite{CL14} and \cite{Fer15}. Let $G$ be a function satisfying Hypothesis \ref{ipotesi dominio}. We are interested in Sobolev spaces on sublevel sets of $G$.

For $k\in \N\cup\set{\infty}$, we denote by $\fcon^k_b(\Omega)$ the space of the restriction to $\Omega$ of functions in $\fcon^k_b(X)$. For any $p\geq1$, the spaces $W^{1,p}(\Omega,\mu)$ and $W^{2,p}(\Omega,\mu)$ are defined as the domain of the closure of the operators $\nabla_H:\fcon_b^\infty(\Omega)\ra \elle^p(\Omega,\mu;H)$ and $(\nabla_H,\nabla_H^2):\fcon^\infty_b(\Omega)\ra\elle^p(\Omega,\mu;H)\times\elle^p(\Omega,\mu;\mathcal{H}_2)$, respectively. See \cite[Lemma 2.2]{CL14} and \cite[Proposition 1]{Cap16}.

We recall that $\nabla_H:\fcon^\infty_b(\Omega)\ra \elle^p(\Omega,\nu; H)$ and $(\nabla_H,\nabla^2_H):\fcon_b^\infty(\Omega)\ra \elle^p(\Omega,\nu;H)\times\elle^p(\Omega,\nu;\mathcal{H}_2)$ are closable operators in $\elle^p(\Omega,\nu)$, whenever $p>\frac{t-1}{t-2}$ (see \cite[Proposition 6.1]{Fer15} and \cite[Proposition 2.2]{CF16convex}). For such values of $p$ we denote by $W^{1,p}(\Omega,\nu)$ the domain of its closure in $\elle^p(\Omega,\nu)$ and we will still denote by $\nabla_H$ the closure operator. The space $W^{2,p}(\Omega,\nu)$ is defined in the same way.

Finally we want to remark that if Hypotheses \ref{ipotesi dominio} and \ref{ipotesi peso} hold, then $\frac{t-1}{t-2}<2$. In particular the Sobolev spaces $W^{1,2}(\Omega,\nu)$ and $W^{2,2}(\Omega,\nu)$ are well defined.

\subsection{Traces of Sobolev functions}\label{Traces of Sobolev functions}
By $\rho$ we indicate the Feyel--de La Pradelle Hausdorff--Gauss surface measure. For a comprehensive treatment of surface measures in infinite dimensional Banach spaces with Gaussian measures we refer to \cite{FP91}, \cite{Fey01} and \cite{CL14}.

Traces of Sobolev functions in infinite dimensional Banach spaces have been studied in \cite{CL14} in the Gaussian case and in \cite{Fer15} in the weighted Gaussian case. We stress that in \cite{CL14} the definition of Sobolev Spaces is different with respect to the our one, but these two definitions coincide in the case of Gaussian measure.
Assume that Hypotheses \ref{ipotesi dominio} and \ref{ipotesi peso} hold and let $p>\frac{t-1}{t-2}$. If $\varphi\in W^{1,p}(\Omega,\nu)$ we define the trace of $\varphi$ on $G^{-1}(0)$ as follows:
\[\trace\varphi=\lim_{n\ra+\infty}\varphi_{n_{|_{G^{-1}(0)}}}\qquad\text{in }\elle^{1}(G^{-1}(0),e^{-U}\rho),\]
and it is possible to prove that ${\rm Tr}\varphi\in \elle^{q}(G^{-1}(0),e^{-U}\rho)$ for any $q\in[1,p(t-2)(t-1)^{-1}]$, where $t$ is the real number fixed in Hypothesis \ref{ipotesi peso}.
Here, $(\varphi_{n})_{n\in\N}$ is any sequence in $\lip_b(\Omega)$, the space of bounded and Lipschitz functions on $\Omega$, which converges in $W^{1,p}(\Omega,\nu)$ to $\varphi$. The definition does not depend on the choice of the sequence $(\varphi_n)_{n\in\N}$ in $\lip_b(\Omega)$ approximating $\varphi$ in $W^{1,p}(\Omega,\nu)$ (see \cite[Proposition 7.1]{Fer15}). In addition the following result holds.

\begin{pro}\label{trace continuity}
Assume that Hypotheses \ref{ipotesi dominio} and \ref{ipotesi peso} hold. Then the operator \(\trace:W^{1,p}(\Omega,\nu)\ra\elle^q(G^{-1}(0),e^{-U}\rho)\) is continuous for every $p>\frac{t-1}{t-2}$ and $q\in \sq{1,p\frac{t-2}{t-1}}$. Moreover, if $U\equiv 0$, then the trace operator is continuous from $W^{1,p}(\Omega,\mu)$ to $\elle^q(G^{-1}(0),\rho)$ for every $p>1$ and $q\in[1,p)$ (see \cite[Corollary 4.2]{CL14} and \cite[Corollary 7.3]{Fer15}).
\end{pro}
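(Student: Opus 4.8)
The plan is to prove the continuity of the trace operator stated in Proposition \ref{trace continuity} by reducing the problem to a uniform estimate on the approximating Lipschitz functions and then invoking the density of $\lip_b(\Omega)$ in $W^{1,p}(\Omega,\nu)$. Since the trace of $\varphi\in W^{1,p}(\Omega,\nu)$ is defined as the $\elle^1(G^{-1}(0),e^{-U}\rho)$-limit of the restrictions $\varphi_{n|_{G^{-1}(0)}}$ for any approximating sequence $(\varphi_n)\subseteq\lip_b(\Omega)$, it suffices to establish a single inequality of the form
\begin{gather}\label{trace-estimate}
\norm{\trace\varphi}_{\elle^q(G^{-1}(0),e^{-U}\rho)}\leq C\,\norm{\varphi}_{W^{1,p}(\Omega,\nu)},
\end{gather}
valid for every $\varphi\in\lip_b(\Omega)$, with a constant $C$ independent of $\varphi$; continuity then follows by passing to the limit along an approximating sequence and using the closability of the gradient. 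The two regimes $U\not\equiv 0$ and $U\equiv 0$ are handled by the same scheme but with the different exponent ranges dictated by Hypothesis \ref{ipotesi peso}.

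The core analytic step is to obtain \eqref{trace-estimate} for Lipschitz $\varphi$. First I would recall the geometric divergence/integration-by-parts identity that links the boundary integral on $G^{-1}(0)$ to a bulk integral over $\Omega$ against $\nabla_H G$. Concretely, for a smooth enough vector field one writes the surface integral of $\abs{\varphi}^q$ (times an appropriate density involving $\abs{\trace(\nabla_H G)}_H$) as an integral over $\Omega$ of $\diver_{\nu,\Omega}$ applied to the field $\abs{\varphi}^q\,\nabla_H G/\abs{\nabla_H G}_H^2$, using the nondegeneracy $\abs{\nabla_H G}_H^{-1}\in\elle^r(\Omega,\mu)$ guaranteed by Hypothesis \ref{ipotesi dominio}\eqref{ipo dominio non degeneratezza}. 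Expanding the divergence produces terms containing $\nabla_H\varphi$, the weight derivatives $\nabla_H U$, and the second-order quantity $\nabla_H^2 G$; each of these is controlled in $\elle$-norms through the precision and integrability assumptions in Hypothesis \ref{ipotesi dominio}. The exponent $q\in[1,p(t-2)/(t-1)]$ arises precisely from balancing a Hölder inequality: one splits the product $\abs{\varphi}^{q-1}\abs{\nabla_H\varphi}$ and the weight factors so that $\abs{\varphi}$ lands in its $\elle^p$-space, $\nabla_H\varphi$ in $\elle^p(\Omega,\nu;H)$, and the remaining factors ($e^{-U}$, $\abs{\nabla_H G}_H^{-1}$, $\nabla_H^2 G$) in the conjugate exponents, which is feasible because $U\in W^{2,t}(X,\mu)$ and all the ingredients are $\elle^r$ for every finite $r$.

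The main obstacle will be the careful bookkeeping of exponents to land exactly in the claimed range $q\in[1,p(t-2)/(t-1)]$ and to verify that every factor produced by the divergence expansion is genuinely integrable. In particular, the appearance of $\nabla_H U$ forces the use of the restriction $t>3$ from Hypothesis \ref{ipotesi peso}: since $e^{-U}\in W^{1,r}(X,\mu)$ only for $r<t$, the conjugate-exponent budget available to absorb the weight term is limited, and it is this constraint that degrades the optimal exponent from $p$ (the Gaussian case) down to $p(t-2)/(t-1)$. When $U\equiv 0$ the weight term disappears, the density $e^{-U}\equiv 1$ is harmless, and the Hölder balance is governed only by $\abs{\nabla_H G}_H^{-1}$ and $\nabla_H^2 G$, which belong to every $\elle^r$; this frees the exponent up to any $q<p$, giving the sharper statement. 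Throughout, one must also justify the surface integration-by-parts identity itself at the level of Lipschitz (rather than $\fcon^2_b$) functions, which I would do by first proving \eqref{trace-estimate} on $\fcon^2_b(\Omega)$ using the explicit divergence formula and the surface-measure results of \cite{CL14} and \cite{Fer15}, and then extending to $\lip_b(\Omega)$ by a routine approximation; for the precise constants and the verification that the limit is independent of the approximating sequence I would cite \cite[Proposition 7.1]{Fer15}.
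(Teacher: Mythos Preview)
The paper does not prove Proposition~\ref{trace continuity}; it is stated as a quotation of known results, with the proof delegated entirely to \cite[Corollary~4.2]{CL14} for the unweighted case and \cite[Corollary~7.3]{Fer15} for the weighted case. So there is no ``paper's own proof'' to compare against.

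That said, your outline is essentially the argument carried out in those references: one represents the surface integral via the integration-by-parts formula (Theorem~\ref{divergence theorem with traces} here, or its precursors in \cite{CL14,Fer15}) applied to a field of the form $\psi\,\nabla_H G/|\nabla_H G|_H$, expands the resulting bulk integral, and balances exponents with H\"older. Your identification of the mechanism that produces the threshold $q\le p(t-2)/(t-1)$---namely that $\nabla_H U$ and $e^{-U}$ only live in $\elle^r$ for $r$ constrained by $t$---is correct, as is the observation that this constraint disappears when $U\equiv 0$.

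One point to tighten: the field you propose, $|\varphi|^q\,\nabla_H G/|\nabla_H G|_H^2$, is not obviously in the right Sobolev class when $q$ is non-integer, since $s\mapsto|s|^q$ is not $C^1$ at the origin for $q<1$ and only $C^1$ (not $C^2$) for $1<q<2$. The references handle this either by first proving the estimate for $q=1$ and then interpolating/bootstrapping, or by regularising $|\cdot|^q$; you should make explicit which route you take. Apart from this technicality, your sketch matches the standard proof.
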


We will still denote by $\trace\Psi=\sum_{n=1}^{+\infty}(\trace\psi_n)e_n$ if $\Psi\in W^{1,p}(\Omega,\nu;H)$, for $p>\frac{t-1}{t-2}$, and $\psi_n=\gen{\Psi,e_n}_H$. The main result of \cite{Fer15} is the following integration by parts formula.

\begin{thm}\label{divergence theorem with traces}
Assume that Hypotheses \ref{ipotesi dominio} and \ref{ipotesi peso} hold and let $p>\frac{t-1}{t-2}$. For every $\varphi\in W^{1,p}(\Omega,\nu)$ and $k\in\N$ we have
\[\int_{\Omega}\pa{\partial_k\varphi-\varphi\partial_k U-\varphi\hat{e}_k}d\nu=\int_{G^{-1}(0)}\trace(\varphi) \trace\pa{\frac{\partial_k G}{\abs{\nabla_H G}_H}}e^{-\trace(U)}d\rho.\]
\end{thm}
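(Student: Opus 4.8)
The plan is to deduce the formula from the whole-space integration by parts formula $\int_X\partial_k\psi\,d\nu=\int_X\psi(\partial_k U+\hat{e}_k)\,d\nu$ recalled above, by approximating the indicator of $\Omega$ from the right by smooth cutoffs of $G$; the boundary integral on $G^{-1}(0)$ will then emerge as the weak limit of the derivatives of these cutoffs, which is precisely the point at which the Feyel--de La Pradelle surface measure enters. Throughout, the recurring soft ingredients are that $e^{-U}\in\elle^r(X,\mu)$ for every $r<t$ (by \cite[Lemma 7.5]{AB06}), that $\partial_k U\in W^{1,t}(X,\mu)$, that $\hat{e}_k\in\elle^q(X,\mu)$ and $\nabla_H G\in\elle^q(X,\mu;H)$ for every $q\geq1$, and the numerical fact that $p>\frac{t-1}{t-2}$ is equivalent to $p'<t-1$; together these make all the H\"older estimates below routine.

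First I would prove the identity for $\varphi=\Phi|_\Omega$ with $\Phi\in\fcon^\infty_b(X)$. Fix a decreasing $\eta_\delta\in\con^\infty_b(\R)$ with $\eta_\delta\equiv1$ on $(-\infty,0]$ and $\eta_\delta\equiv0$ on $[\delta,+\infty)$, and apply the whole-space formula to $\psi:=\Phi\,(\eta_\delta\circ G)$, which belongs to $W^{1,p}(X,\nu)$ by the chain and product rules together with the $\nu$-integrability of $\nabla_H G$. Using $\nabla_H(\eta_\delta\circ G)=(\eta_\delta'\circ G)\,\nabla_H G$ and rearranging, one obtains
\[\int_X\pa{\partial_k\Phi-\Phi\,\partial_k U-\Phi\,\hat{e}_k}(\eta_\delta\circ G)\,d\nu=\int_X\Phi\,(-\eta_\delta'\circ G)\,\partial_k G\,d\nu.\]
As $\delta\to0^+$, the left-hand side converges to $\int_\Omega\pa{\partial_k\Phi-\Phi\,\partial_k U-\Phi\,\hat{e}_k}\,d\nu$ by dominated convergence, since $\eta_\delta\circ G\to\mathbf 1_{\{G\leq0\}}$ pointwise with $0\leq\eta_\delta\circ G\leq1$ and the bracket lies in $\elle^1(X,\nu)$.

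For the right-hand side --- the crux of the proof --- I would insert the factor $\abs{\nabla_H G}_H$ and invoke the coarea formula underlying the construction of the Feyel--de La Pradelle surface measures $(\rho_s)_{s\in\R}$ on the level sets of $G$ (with $\rho=\rho_0$), rewriting it as
\[\int_\R\bigl(-\eta_\delta'(s)\bigr)\pa{\int_{G^{-1}(s)}\frac{\Phi\,e^{-U}\,\partial_k G}{\abs{\nabla_H G}_H}\,d\rho_s}ds.\]
Since $\bigl(-\eta_\delta'(s)\bigr)\,ds$ is a probability measure supported on $[0,\delta]$ converging weakly to $\delta_0$, this tends as $\delta\to0^+$ to $\int_{G^{-1}(0)}\Phi\,e^{-U}\,\abs{\nabla_H G}_H^{-1}\,\partial_k G\,d\rho$ provided the inner integral is right-continuous at $s=0$; on $G^{-1}(0)$ the restrictions of $\Phi$, $e^{-U}$ and $\partial_k G/\abs{\nabla_H G}_H$ coincide $\rho$-a.e.\ with $\trace(\Phi)$, $e^{-\trace(U)}$ and $\trace\pa{\partial_k G/\abs{\nabla_H G}_H}$, which gives the formula for $\varphi\in\fcon^\infty_b(\Omega)$. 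To reach a general $\varphi\in W^{1,p}(\Omega,\nu)$ I would choose $\Phi_n\in\fcon^\infty_b(X)$ with $\Phi_n|_\Omega\to\varphi$ in $W^{1,p}(\Omega,\nu)$: the left-hand side passes to the limit by H\"older's inequality between $\elle^p(\Omega,\nu)$ and $\elle^{p'}(\Omega,\nu)$ (using $\partial_k U,\hat{e}_k\in\elle^{p'}(\Omega,\nu)$, which holds since $p'<t-1$), while on the right $\trace(\Phi_n|_\Omega)\to\trace(\varphi)$ in $\elle^q(G^{-1}(0),e^{-U}\rho)$ by Proposition \ref{trace continuity} and $\abs{\partial_k G/\abs{\nabla_H G}_H}\leq1$ (because $\partial_k G=\gen{\nabla_H G,e_k}_H$), so a final H\"older estimate closes the argument.

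\textbf{Main obstacle.} The genuinely hard step is the convergence of the boundary term, i.e.\ the right-continuity at $s=0$ of $s\mapsto\int_{G^{-1}(s)}\Phi\,e^{-U}\,\abs{\nabla_H G}_H^{-1}\,\partial_k G\,d\rho_s$ and, underneath it, the coarea formula and the very construction of the Feyel--de La Pradelle measure under the precision and non-degeneracy conditions of Hypothesis \ref{ipotesi dominio}\eqref{ipo dominio convessita}--\eqref{ipo dominio non degeneratezza}; this is the part genuinely imported from \cite{FP91,Fey01,CL14}. The only new work compared with the purely Gaussian case is the bookkeeping forced by the weight $e^{-U}$: verifying $\Phi(\eta_\delta\circ G)\in W^{1,p}(X,\nu)$, the $\nu$-integrability of the integrals involving $\partial_k U$, $\hat{e}_k$ and $\nabla_H G$, and the factorization $\trace(\varphi\,e^{-U})=\trace(\varphi)\,e^{-\trace(U)}$ --- all of which reduce, thanks to $t>3$ and hence $\frac{t-1}{t-2}<2$, to the integrability of $e^{-U}$ noted at the outset.
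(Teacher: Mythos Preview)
The paper does not prove this statement: it is quoted from \cite{Fer15} (see the sentence ``The main result of \cite{Fer15} is the following integration by parts formula'' immediately preceding the theorem), so there is no in-paper proof to compare your proposal against. Your sketch is in fact the standard route taken in \cite{CL14} for $U\equiv 0$ and in \cite{Fer15} for the weighted case: approximate $\mathbf 1_\Omega$ by $\eta_\delta\circ G$, apply the whole-space integration by parts, and identify the derivative term via the Feyel--de~La~Pradelle coarea formula.

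Your identification of the ``main obstacle'' is accurate: the substantive content is entirely in the coarea formula $\int_X f\,|\nabla_H G|_H\,d\mu=\int_\R\int_{G^{-1}(s)}f\,d\rho_s\,ds$ and in the continuity of $s\mapsto\int_{G^{-1}(s)}(\cdot)\,d\rho_s$ at $s=0$, both of which are established in the cited works under Hypothesis~\ref{ipotesi dominio}. The weighted bookkeeping you outline (integrability of $\partial_k U$, $\hat e_k$, $\nabla_H G$ against $e^{-U}$, and the factorisation of traces) is exactly what \cite{Fer15} adds on top of \cite{CL14}. One small point: for the density step you need $\partial_k U\in\elle^{p'}(\Omega,\nu)$; since $U\in W^{2,t}(X,\mu)$ gives $\partial_k U\in\elle^t(X,\mu)$ and $e^{-U}\in\elle^r(X,\mu)$ for $r<t$, H\"older yields $\partial_k U\in\elle^{p'}(X,\nu)$ precisely when $p'<t-1$, which is your hypothesis --- but you should spell this out rather than just assert it.
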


Another important result, that we will use in this paper, is the following (see \cite[Proposition 4.8]{CL14} and \cite[Proposition 7.5]{Fer15}).

\begin{pro}\label{coincidenza traccia e versione precisa}
Assume that Hypotheses \ref{ipotesi dominio} and \ref{ipotesi peso} hold and let $p>\frac{t-1}{t-2}$. Then for every $\varphi\in W^{1,p}(\Omega, \nu)$, the trace of $\trace(\varphi)$ at $G^{-1}(0)$ coincides $\rho$-a.e. with the restriction to $G^{-1}(0)$ of any precise version $\tilde{\varphi}$ of $\varphi$.
\end{pro}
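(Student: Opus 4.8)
\emph{Proof strategy.} This is quoted from \cite[Proposition 4.8]{CL14} and \cite[Proposition 7.5]{Fer15}; the argument I would give runs as follows. The plan is to prove the identity by approximation — reduce to smooth cylindrical functions, then pass to the limit simultaneously on the trace side and on the side of precise versions — gluing the two together through the fact that the Feyel--de La Pradelle measure $\rho$ assigns no mass to subsets of $G^{-1}(0)$ having zero Gaussian $C_{1,p}$-capacity.

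First I would dispose of the smooth case: if $\varphi\in\lip_b(\Omega)$, in particular if $\varphi\in\fcon^1_b(\Omega)$, then $\varphi$ is continuous on $\Omega$, hence is itself a precise version, and choosing the constant approximating sequence in the definition of $\trace$ gives $\trace(\varphi)=\varphi|_{G^{-1}(0)}=\tilde\varphi|_{G^{-1}(0)}$ pointwise on $G^{-1}(0)$. For a general $\varphi\in W^{1,p}(\Omega,\nu)$ I would fix $(\varphi_n)_n\subseteq\fcon^\infty_b(\Omega)$ with $\varphi_n\to\varphi$ in $W^{1,p}(\Omega,\nu)$ and argue along two parallel lines. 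On the trace side, Proposition \ref{trace continuity} (applicable with exponent $q=1$ because $p>\frac{t-1}{t-2}$) gives $\trace(\varphi_n)\to\trace(\varphi)$ in $\elle^1(G^{-1}(0),e^{-U}\rho)$, so after passing to a subsequence $\varphi_n|_{G^{-1}(0)}=\trace(\varphi_n)\to\trace(\varphi)$ $\rho$-a.e. On the side of precise versions, I would pass to a subsequence $(\varphi_{n_k})_k$ converging fast enough in $W^{1,p}$ and apply the capacity estimate recalled in the Capacity subsection to the telescoping differences $\varphi_{n_{k+1}}-\varphi_{n_k}$, arranging that the bounds for $C_{1,p}(\{|\ol{\varphi_{n_{k+1}}}-\ol{\varphi_{n_k}}|>2^{-k}\})$ are summable; a Borel--Cantelli argument in capacity then shows that $\varphi_{n_k}$ converges outside a set of zero $C_{1,p}$-capacity to a $C_{1,p}$-quasicontinuous representative of $\varphi$, i.e.\ to a precise version, which by uniqueness of precise versions up to capacity-null sets coincides with $\tilde\varphi$ off a $C_{1,p}$-null set.

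I would then close the argument with the key fact that $\rho(A)=0$ whenever $A\subseteq G^{-1}(0)$ and $C_{1,p}(A)=0$: this is a regularity property of the surface measure resting on its finite-dimensional construction and on the good behaviour of $G$ and of $\Omega=G^{-1}(-\infty,0]$ furnished by Hypothesis \ref{ipotesi dominio} — notably that $\Omega$ is closed and that the $H$-distance to $\Omega$ is well behaved — see \cite{FP91} and \cite{CL14}. Granting it, the exceptional set of the previous step is $\rho$-negligible in $G^{-1}(0)$, so $\varphi_{n_k}|_{G^{-1}(0)}\to\tilde\varphi|_{G^{-1}(0)}$ $\rho$-a.e.; comparing this with the trace-side convergence along a common subsequence forces $\trace(\varphi)=\tilde\varphi|_{G^{-1}(0)}$ $\rho$-a.e. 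Independence of the chosen precise version follows immediately, since any two precise versions of $\varphi$ differ only on a $C_{1,p}$-null, hence $\rho$-null, subset of $G^{-1}(0)$.

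I expect the genuine obstacle to be precisely the comparison ``$C_{1,p}(A)=0\Rightarrow\rho(A)=0$ on $G^{-1}(0)$'', which is where the structure of the surface measure and of $\Omega$ really enters; a related and technically delicate preliminary is to pin down what ``a precise version of $\varphi\in W^{1,p}(\Omega,\nu)$'' should mean when $\Omega\subsetneq X$, for which I would exploit Hypothesis \ref{ipotesi dominio} together with the integrability of $e^{-U}$ coming from Hypothesis \ref{ipotesi peso} to realize the relevant functions inside an (unweighted) global Gaussian Sobolev class, where the capacity theory of the Capacity subsection applies unchanged. The Borel--Cantelli step in capacity, by contrast, is entirely routine.
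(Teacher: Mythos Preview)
The paper does not supply its own proof of this proposition: it is quoted verbatim from \cite[Proposition 4.8]{CL14} and \cite[Proposition 7.5]{Fer15}, with no argument given in the present text. So there is nothing in the paper to compare against beyond the bare citation.

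That said, your outline is the standard one underlying the cited references, and the structure is sound: settle the Lipschitz case trivially, approximate in $W^{1,p}(\Omega,\nu)$, pass to the limit on the trace side via Proposition \ref{trace continuity}, pass to the limit on the precise-version side via a Borel--Cantelli argument in capacity, and reconcile the two using the absolute continuity of $\rho$ on $G^{-1}(0)$ with respect to $C_{1,p}$-capacity. You have also correctly isolated the two genuine difficulties. The implication ``$C_{1,p}(A)=0\Rightarrow\rho(A)=0$'' is indeed the substantive point; in \cite{FP91} and \cite{CL14} it comes from the very construction of $\rho$ through finite-dimensional disintegration and the coarea-type formula, which gives a quantitative bound of $\rho$ by a capacity. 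The second issue --- what ``precise version'' means for $\varphi\in W^{1,p}(\Omega,\nu)$ when the capacity theory of the paper is set up only on $W^{k,p}(X,\mu)$ --- is real, and your proposed fix (embed into an unweighted global Sobolev class) is exactly what is done in \cite{Fer15}: one uses that $e^{-U}\in W^{1,s}(X,\mu)$ together with the product rule to view $\varphi e^{-U}$ (or a suitable truncation) as an element of an unweighted space, where the Capacity subsection applies verbatim. Once that embedding is in place, the rest of your argument goes through without change.
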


\subsection{The spaces $W^{2,2}_{U,N}$ and $Z^{2,2}_{U,N}$}

We recall the definition of the space $W^{2,2}_U$ and $W^{2,2}_{U,N}$.
\begin{gather*}
W_U^{2,2}(\Omega,\nu)=\bigg\{u\in W^{2,2}(\Omega,\nu)\,\bigg|\,\int_\Omega\gen{\nabla_H^2 U\nabla_H u,\nabla_H u}d\nu<+\infty\bigg\},
\end{gather*}
endowed with the norm
\begin{gather}\label{norma1}
\norm{u}_{W^{2,2}_U(\Omega,\nu)}^2=\norm{u}_{W^{2,2}(\Omega,\nu)}^2+\int_\Omega\gen{\nabla_H^2 U\nabla_H u,\nabla_H u}_Hd\nu.
\end{gather}
\noindent We consider the space $W^{2,2}_U(\Omega,\nu)$ 
\begin{gather*}
W_{U,N}^{2,2}(\Omega,\nu)=\bigg\{u\in W^{2,2}(\Omega,\nu)\,\bigg|\,\int_\Omega\gen{\nabla_H^2 U\nabla_H u,\nabla_H u}d\nu<+\infty,\phantom{aaaaaaaaaaaaaaaaaaaaaaa}\\
\phantom{aaaaaaaaaaaaaaaaaa}\gen{\trace(\nabla_H u),\trace(\nabla_H G)}_H=0\text{ $\rho$-a.e. in }G^{-1}(0)\bigg\}
\end{gather*}
endowed with the norm \eqref{norma1}.

We denote by $Z_U^{2,2}(X,\nu)$ be the completion of the space $\fcon^2_b(X)$ with respect to the norm defined in \eqref{norma1} and by $Z^{2,2}_{U,N}(\Omega,\nu)$ the completion of the space
\[\mathcal{Z}(\Omega):=\set{f\in \fcon^2_b(\Omega)\tc \gen{\nabla_H f(x),\nabla_H G(x)}_H=0\text{ for $\rho$-a.e. }x\in G^{-1}(0)},\]
with respect to the norm \eqref{norma1}.

\section{Second-order analysis of the Moreau--Yosida approximations along $H$}\label{Second-order analysis of the Moreau--Yosida approximations along $H$}

We start this section by recalling the definition of the subdifferential of a convex semicontinuous function. If $f:X\ra\R$ is a proper, convex and lower semicontinuous function, we denote by $\dom(f)$ the domain of $f$, namely $\dom(f):=\set{x\in X\tc f(x)<+\infty}$,
and by $\partial f(x)$ the subdifferential of $f$ at the point $x$, i.e.
\begin{gather*}
\partial f(x):=\left\{\begin{array}{lr}
\set{x^*\in X^*\tc f(y)\geq f(x)+x^*(y-x)\text{ for every }y\in X} & x\in\dom(f);\\
\emptyset & x\notin\dom(f).
\end{array}\right.
\end{gather*}
For a classical treatment of subdifferentials of convex functions we refer to \cite{Phe93} and \cite{BP12}.

We recall that for $\alpha>0$ the \emph{Moreau--Yosida approximation along $H$} of a proper convex and lower semicontinuous function $f:X\ra\R\cup\set{+\infty}$ is
\begin{gather}\label{M env}
f_\alpha(x):=\inf\set{f(x+h)+\frac{1}{2\alpha}\abs{h}^2_H\tc h\in H}.
\end{gather}
See \cite[Section 3]{CF16} and \cite[Section 4]{CF16convex} for more details and \cite{Bre73} and \cite[Section 12.4]{BC11} for a treatment of the classical Moreau--Yosida approximations in Hilbert spaces, which are different from the ones defined in \eqref{M env}. Second-order analysis of the classical Moreau--Yosida approximations have been studied in various papers, e.g. \cite{Qi99}, \cite{OG07} and \cite{Ovc10}.

In the following proposition we recall some results contained in \cite[Section 3]{CF16} and in \cite[Section 4]{CF16convex}.

\begin{pro}\label{proprieta MY}
Let $x\in X$, $\alpha>0$ and $f:X\ra\R\cup\set{+\infty}$ be a proper convex and lower semicontinuous function. The following properties hold:
\begin{enumerate}
\item the function $g_{\alpha,x}:H\ra\R$ defined as \(g_{\alpha,x}(h):=f(x+h)+\frac{1}{2\alpha}\abs{h}^2_H\), has a unique global minimum point $P(x,\alpha)\in H$. Moreover $P(x,\alpha)\ra 0$ in $H$ as $\alpha$ goes to zero;\label{esistenza minimo}

\item $f_\alpha(x)\nearrow f(x)$ as $\alpha\ra 0^+$. In particular $f_\alpha(x)\leq f(x)$ for every $\alpha>0$ and $x\in X$;\label{convergenza MY}

\item for $p\in H$, we have $p=P(x,\alpha)$ if, and only if, \(f(x+p)\leq f(x+h)+\frac{1}{\alpha}\gen{p,h-p}_H\), for every $h\in H$;\label{caratterizzazione punto minimo MY}

\item the function $P_{x,\alpha}:H\ra H$ defined as $P_{x,\alpha}(h):=P(x+h,\alpha)$ is Lipschitz continuous, with Lipschitz constant less than or equal to $1$;

\item $f_\alpha$ is differentiable along $H$ at every point $x\in X$. In addition, for every $x\in X$, we have \(\nabla_H f_\alpha(x)=-\alpha^{-1}P(x,\alpha)\);\label{differenziabilita MY}

\item $f_\alpha$ belongs to $W^{2,p}(X,\mu)$, whenever $f\in\elle^p(X,\mu)$ for some $1\leq p<+\infty$;\label{Sobolev MY}

\item let $x\in \dom(f)$ and assume that $f$ belongs to $W^{1,p}(X,\mu)$ for some $p>1$. If we define $F:H\ra \R$ as $F(h):=f(x+h)$, then $F$ is proper convex and lower semicontinuous function. Moreover, $\nabla_H f(x)\in\partial F(0)$ and $\nabla_H f_\alpha(x)\in \partial F(P(x,\alpha))$;\label{gradiente MY}

\item let $x\in \dom(f)$ and assume that $f$ belongs to $W^{1,p}(X,\mu)$ for some $p>1$. Then $\nabla_H f_\alpha(x)$ converges to $\nabla_H f(x)$ as $\alpha$ goes to zero.\label{convergenza gradiente MY}
\end{enumerate}
\end{pro}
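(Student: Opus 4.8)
The statement to prove is Proposition~\ref{proprieta MY}, which collects a list of known results about the Moreau--Yosida approximation along $H$. Let me sketch how I would prove these.

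\medskip

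\textbf{Plan for the proof of Proposition~\ref{proprieta MY}.}

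The overall strategy is to reduce everything to the Hilbert-space theory of the classical Moreau--Yosida regularization applied, for each fixed $x$, to the function $F = F_x : H \to \R \cup \{+\infty\}$ defined by $F_x(h) := f(x+h)$. First I would check that $F_x$ is proper, convex and lower semicontinuous on $H$: convexity is immediate from convexity of $f$ and the affine structure of $h \mapsto x+h$; properness follows because $x \in \dom(f)$ (when that hypothesis is in force) gives $F_x(0) < +\infty$; lower semicontinuity on $H$ requires a small argument, since $f$ is only assumed l.s.c.\ on $X$ and the $H$-topology is finer than the subspace topology from $X$, so l.s.c.\ on $X$ restricted to the affine slice $x+H$ transfers to $H$-l.s.c.\ after noting that $H \hookrightarrow X$ is continuous. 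With $F_x$ in hand, observe that $f_\alpha(x) = \inf_{h \in H}\{F_x(h) + \tfrac{1}{2\alpha}|h|_H^2\}$ is exactly the classical Moreau envelope of $F_x$ at the origin of $H$, and $P(x,\alpha)$ is the corresponding proximal point $\operatorname{prox}_{\alpha F_x}(0)$.

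Then items \eqref{esistenza minimo}--\eqref{caratterizzazione punto minimo MY} and \eqref{differenziabilita MY} are the standard facts about proximal maps and Moreau envelopes in a Hilbert space: $g_{\alpha,x} = F_x + \tfrac{1}{2\alpha}|\cdot|_H^2$ is proper, $\tfrac1\alpha$-strongly convex and l.s.c., hence has a unique minimizer; the minimizer converges to $\operatorname{prox}_{0}(0)=0$ as $\alpha \downarrow 0$ because $g_{\alpha,x}(P(x,\alpha)) \le g_{\alpha,x}(0) = F_x(0)$ forces $|P(x,\alpha)|_H^2 \le 2\alpha(F_x(0) - f_\alpha(x)) \le 2\alpha(F_x(0) - \inf F_x + \text{l.b.})$, which is controlled; monotone convergence $f_\alpha(x) \nearrow f(x)$ is the classical monotonicity of the Moreau envelope in $\alpha$; the variational inequality characterization in \eqref{caratterizzazione punto minimo MY} is the first-order optimality condition $0 \in \partial F_x(p) + \tfrac1\alpha p$ rewritten; the Lipschitz-1 bound on $P_{x,\alpha}$ follows from firm nonexpansiveness of the resolvent $(I + \alpha \partial F)^{-1}$, using that $P(x+h,\alpha) = \operatorname{prox}_{\alpha F_x}(h)$; and differentiability of the envelope with $\nabla_H f_\alpha(x) = -\alpha^{-1}P(x,\alpha)$ is the standard gradient formula for Moreau envelopes, modulo checking that the $H$-differentiability in the sense defined in Section~\ref{Notations and preliminaries} (uniform difference quotients over the unit sphere of $H$) matches Fréchet differentiability on $H$, which it does. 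For items \eqref{gradiente MY} and \eqref{convergenza gradiente MY} I would use $\nabla_H f_\alpha(x) = -\alpha^{-1}P(x,\alpha) \in \partial F_x(P(x,\alpha))$ directly from the optimality condition, note $\nabla_H f(x) \in \partial F_x(0)$ when $f$ is $H$-differentiable (the gradient along $H$ is the unique subgradient when it exists), and invoke the Hilbert-space fact that $\nabla(F_x)_\alpha(0) \to$ the minimal-norm element of $\partial F_x(0)$; since $\partial F_x(0) = \{\nabla_H f(x)\}$ is a singleton under the $W^{1,p}$ hypothesis, convergence is automatic. References \cite{Bre73}, \cite{BC11} cover all of these Hilbert-space ingredients.

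\medskip

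\textbf{Main obstacle.} The one genuinely non-routine item is \eqref{Sobolev MY}: $f_\alpha \in W^{2,p}(X,\mu)$ whenever $f \in \elle^p(X,\mu)$. This is not a pointwise-in-$x$ Hilbert statement but a global Gaussian-Sobolev statement on $X$, and the difficulty is twofold. First, one must show $f_\alpha$ is twice differentiable along $H$ at every point with Hessian bounded in Hilbert--Schmidt norm; the first derivative is $-\alpha^{-1}P(\cdot,\alpha)$ which is $H$-Lipschitz with constant $\alpha^{-1}$ by the Lipschitz-1 property of $P_{x,\alpha}$, but passing to a genuine second derivative requires Rademacher-type differentiability of the $H$-Lipschitz map $x \mapsto P(x,\alpha)$ along $H$ — i.e.\ one invokes that $H$-Lipschitz functions on Wiener space are Gateaux differentiable along $H$ $\mu$-a.e.\ with derivative in the right $\elle^p$ space (Bogachev \cite[Section 5.11]{Bog98}). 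Second, one must control the $\elle^p(X,\mu)$-norms of $f_\alpha$, $\nabla_H f_\alpha$ and $\nabla_H^2 f_\alpha$: from $0 \le f(x) - f_\alpha(x)$ and $f_\alpha \le f$ together with a lower bound of the form $f_\alpha(x) \ge -a\|x\|_X - b$ coming from an affine minorant of the convex l.s.c.\ $f$ (which exists on all of $X$ by Hahn--Banach), one gets $f_\alpha \in \elle^p$; the gradient is bounded via $|\nabla_H f_\alpha(x)|_H = \alpha^{-1}|P(x,\alpha)|_H$ and the estimate $|P(x,\alpha)|_H^2 \le 2\alpha(f(x) - f_\alpha(x))$, so $|\nabla_H f_\alpha|_H^2 \le \tfrac2\alpha(f - f_\alpha) \in \elle^p$ after checking integrability; and $\|\nabla_H^2 f_\alpha\|_{\mathcal H_2}$ needs the a.e.\ bound coming from the Lipschitz constant of $P$ — here one must be careful that "Lipschitz constant $\le 1$ in operator norm" is what is available, while Hilbert--Schmidt integrability must be extracted from finer structure, essentially the fact that $\nabla_H^2 f_\alpha = -\alpha^{-1} D_H P(\cdot,\alpha)$ and $D_H P(\cdot,\alpha) = (I + \alpha \nabla_H^2 F)^{-1}$ in a suitable sense is a contraction that is additionally shown to be Hilbert--Schmidt-valued a.e.\ using the structure of the minimization problem — this is precisely the content worked out in \cite[Section 3]{CF16} and \cite[Section 4]{CF16convex}, which I would cite rather than redo. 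I expect the cleanest writeup simply to state that \eqref{esistenza minimo}--\eqref{differenziabilita MY} and \eqref{gradiente MY}--\eqref{convergenza gradiente MY} follow from the classical Hilbert-space theory applied to $F_x$, and that \eqref{Sobolev MY} is \cite[Proposition 3.x]{CF16} (resp.\ \cite[Proposition 4.x]{CF16convex}), with the $H$-Lipschitz estimate on $\nabla_H f_\alpha$ being the key quantitative input.
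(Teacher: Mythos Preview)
Your proposal is correct and aligns with the paper's treatment: the paper does not prove this proposition at all but simply states that it ``recall[s] some results contained in \cite[Section 3]{CF16} and in \cite[Section 4]{CF16convex}'', with no accompanying proof. Your sketch actually goes further than the paper by outlining the reduction to the classical Hilbert-space Moreau--Yosida theory for $F_x$ and correctly isolating item \eqref{Sobolev MY} as the only genuinely Wiener-space statement, deferring it to the same references the paper cites.
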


The last property we need is the convergence of the second-order derivative along $H$.

\begin{pro}\label{second order MY}
Let $f\in W^{2,p}(X,\mu)$ for some $p>1$ and $\alpha>0$. Assume that $f$ is twice differentiable along $H$ at every point $x\in\dom f$. Then for every $x\in dom(f)$ there exists $\nabla_H^2 f_\alpha(x)$, and $\nabla_H^2 f_\alpha(x)$ converges to $\nabla_H^2 f(x)$ as $\alpha$ goes to zero.
\end{pro}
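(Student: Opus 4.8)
The plan is to differentiate the identity $\nabla_H f_\alpha(x)=-\alpha^{-1}P(x,\alpha)$ from Proposition~\ref{proprieta MY}\eqref{differenziabilita MY}, so that twice $H$-differentiability of $f_\alpha$ at $x$ becomes a statement about the differentiability along $H$ of the minimizer map $h\mapsto P(x+h,\alpha)$. First I would record the Euler equation at the minimum. Fix $x\in\dom f$ and put $p_0:=P(x,\alpha)$; from $f(x+p_0)+\frac{1}{2\alpha}\abs{p_0}_H^2\le f(x)<+\infty$ we get $x+p_0\in\dom f$, hence $f$ is twice, a fortiori once, differentiable along $H$ there, and since $h\mapsto f(x+h)+\frac{1}{2\alpha}\abs{h}_H^2$ is convex and differentiable along $H$ at its minimum $p_0$, its $H$-gradient vanishes there:
\[
\nabla_H f(x+p_0)+\tfrac1\alpha p_0=0 .
\]
For $h\in H$ set $p_h:=P(x+h,\alpha)$. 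Using the definition~\eqref{M env} with the competitor $p_0-h\in H$ one gets $f_\alpha(x+h)\le f(x+p_0)+\frac{1}{2\alpha}\abs{p_0-h}_H^2<+\infty$, and since the infimum defining $f_\alpha(x+h)$ is attained at $p_h$ (Proposition~\ref{proprieta MY}\eqref{esistenza minimo}) this forces $f(x+h+p_h)<+\infty$, i.e. $x+h+p_h\in\dom f$; the same reasoning then yields $\nabla_H f(x+h+p_h)+\tfrac1\alpha p_h=0$.

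Next I would subtract the two Euler equations. Writing $w_h:=h+p_h-p_0$ and using the $1$-Lipschitz property of $h\mapsto P(x+h,\alpha)$ from Proposition~\ref{proprieta MY}, we have $\abs{p_h-p_0}_H\le\abs{h}_H$, hence $\abs{w_h}_H\le 2\abs{h}_H\to0$. The twice $H$-differentiability of $f$ at $x+p_0$, rewritten by multiplying the defining limit by $t$ and substituting $w=th$, is exactly the expansion
\[
\nabla_H f(x+p_0+w)-\nabla_H f(x+p_0)=Aw+R(w),\qquad \abs{R(w)}_H\le\eta(\abs{w}_H)\,\abs{w}_H,
\]
with $A:=\nabla_H^2 f(x+p_0)\in\mathcal{H}_2$ and $\eta(s)\to0$ as $s\to0^+$. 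Substituting $w=w_h=h+(p_h-p_0)$ into the difference of the Euler equations gives $(I+\alpha A)(p_h-p_0)=-\alpha Ah-\alpha R(w_h)$. Convexity of $f$ makes $h\mapsto f(x+p_0+h)$ convex, so $\gen{Ah,h}_H\ge0$ for every $h\in H$; thus $(u,v)\mapsto\gen{(I+\alpha A)u,v}_H$ is bounded and coercive on $H$, and by Lax--Milgram $I+\alpha A$ is invertible with $\norm{(I+\alpha A)^{-1}}_{\mathcal{L}(H)}\le1$. Hence
\[
p_h-p_0=-\alpha(I+\alpha A)^{-1}Ah+\alpha(I+\alpha A)^{-1}R(w_h),
\]
where the last term has $H$-norm at most $2\alpha\,\eta(2\abs{h}_H)\,\abs{h}_H=o(\abs{h}_H)$, uniformly in $h/\abs{h}_H$. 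Multiplying by $-\alpha^{-1}$ and using $\nabla_H f_\alpha(x+h)-\nabla_H f_\alpha(x)=-\alpha^{-1}(p_h-p_0)$, this says precisely that $f_\alpha$ is twice differentiable along $H$ at $x$ with
\[
\nabla_H^2 f_\alpha(x)=\bigl(I+\alpha\,\nabla_H^2 f(x+P(x,\alpha))\bigr)^{-1}\nabla_H^2 f(x+P(x,\alpha))\in\mathcal{H}_2 .
\]

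For the limit $\alpha\to0^+$ I would argue as follows. By Proposition~\ref{proprieta MY}\eqref{esistenza minimo}, $P(x,\alpha)\to0$ in $H$, so $A_\alpha:=\nabla_H^2 f(x+P(x,\alpha))\to\nabla_H^2 f(x)$ in $\mathcal{H}_2$ by continuity of $\nabla_H^2 f$ along $H$ (available in the situation of interest, where $f=U$ is twice \emph{continuously} differentiable along $H$). In particular $\sup_{\alpha}\norm{A_\alpha}_{\mathcal{H}_2}<+\infty$, so $\norm{\alpha A_\alpha}_{\mathcal{L}(H)}\to0$ and hence $(I+\alpha A_\alpha)^{-1}\to I$ in $\mathcal{L}(H)$; combining,
\[
\norm{(I+\alpha A_\alpha)^{-1}A_\alpha-\nabla_H^2 f(x)}_{\mathcal{H}_2}\le\norm{A_\alpha-\nabla_H^2 f(x)}_{\mathcal{H}_2}+\norm{(I+\alpha A_\alpha)^{-1}-I}_{\mathcal{L}(H)}\norm{\nabla_H^2 f(x)}_{\mathcal{H}_2}\longrightarrow0 ,
\]
which gives the assertion.

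The step I expect to be the main obstacle is the uniformity bookkeeping around the Taylor remainder: one must make sure the expansion of $\nabla_H f$ at $x+p_0$ is applied to increments $w_h\in H$ that are genuine $H$-perturbations of the \emph{fixed} point $x+p_0$, so that the ``$o(\cdot)$'' is uniform over directions and can be read back as the uniform-over-the-unit-sphere limit defining twice $H$-differentiability of $f_\alpha$; alongside this, the domain memberships $x+p_0,\,x+h+p_h\in\dom f$ that license the two Euler equations need to be checked carefully. The convergence as $\alpha\to0$ moreover uses continuity, not merely pointwise existence, of $\nabla_H^2 f$ along $H$, which is where the twice continuous differentiability of the weight along $H$ is really needed.
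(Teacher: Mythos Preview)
Your argument is correct and follows essentially the same route as the paper: both differentiate the first-order optimality condition $\nabla_H f_\alpha(x)=\nabla_H f(x+P(x,\alpha))$ (equivalently, your Euler equation) to obtain $\nabla_H^2 f_\alpha(x)=(I+\alpha A)^{-1}A$ with $A=\nabla_H^2 f(x+P(x,\alpha))$, and then let $\alpha\to0$. Your version is in fact more careful than the paper's terse proof---you justify pointwise $H$-differentiability of the minimizer map directly via difference quotients rather than invoking a formal chain rule from $H$-Lipschitzness, and you rightly flag that the convergence step needs continuity of $\nabla_H^2 f$ along $H$ (available since in the application $f=U$ is twice \emph{continuously} $H$-differentiable), a point the paper's proof uses implicitly.
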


\begin{proof}
By Proposition \ref{proprieta MY}\eqref{gradiente MY} we get $\nabla_H f_\alpha(x)=\nabla_H f(x+P(x,\alpha))$. We can differentiate along $H$ since $P(x,\alpha)$ admits a $H$-gradient (it is $H$-Lipschitz).
\begin{gather*}
\nabla_H^2f_\alpha(x)=\nabla_H^2f(x+P(x,\alpha))(I_H+\nabla_HP(x,\alpha))=\nabla_H^2f(x+P(x,\alpha))(I_H-\alpha\nabla_H^2f_\alpha(x)).
\end{gather*}
If we let $\alpha\ra0$ then, by \ref{proprieta MY}\eqref{convergenza gradiente MY}, we get $\lim_{\alpha\ra 0}\nabla_H^2f_\alpha(x)=\nabla_H^2f(x)$.
\end{proof}

\section{The divergence operator}\label{The divergence operator}

We start this section by recalling the definition of divergence, see \cite[Section 5.8]{Bog98} for the case $\Omega=X$. For every measurable map $\Phi:\Omega\ra X$ and for every $f\in\fcon^\infty_b(\Omega)$ we define
\begin{gather}\label{divergenza su dominio}
\partial_\Phi f(x)=\lim_{t\ra 0}\frac{f(x+t\Phi(x))-f(x)}{t},\qquad\qquad x\in\Omega.
\end{gather}

\begin{defn}
Let $\Phi\in \elle^1(\Omega,\nu;X)$ be a vector field. We say that $\Phi$ admits \emph{divergence} if there exists a function $g\in\elle^1(\Omega,\nu)$ such that
\begin{gather}\label{condizione di divergenza in Omega}
\int_\Omega\partial_{\Phi}fd\nu=-\int_\Omega fgd\nu,
\end{gather}
for every $f\in\fcon_b^\infty(\Omega)$, where $\partial_{\Phi}f$ has been defined in \eqref{divergenza su dominio}. If such a function $g$ exists, then we set $\diver_{\nu,\Omega} \Phi:=g$. Observe that, when $\diver_{\nu,\Omega} \Phi$ exists, it is unique by the density of $\fcon_b^\infty(\Omega)$ in $\elle^p(\Omega,\nu)$ (see \cite{Fer15}). We denote by $D(\diver_{\nu,\Omega})$ the domain of $\diver_{\nu,\Omega}$ in $\elle^1(\Omega,\nu;X)$. Lastly, we observe that if $\Phi\in\elle^1(\Omega,\nu;H)$, then $\partial_{\Phi}f(x)=\gen{\nabla_Hf(x),\Phi(x)}_H$ for $x\in\Omega$. In this case (\ref{condizione di divergenza in Omega}) becomes
\begin{gather}\label{condizione di divergenza in H}
\int_\Omega\gen{\nabla_Hf,\Phi}_Hd\nu=-\int_\Omega fgd\nu,\qquad \text{for every } f\in\fcon_b^\infty(\Omega).
\end{gather}
\end{defn}
\noindent We remark that in $\elle^2$-setting, the divergence operator $\diver_{\nu,\Omega}$ is $-\nabla_H^*$, the $\elle^2$-adjoint of the the gradient along $H$ operator. Indeed, for any $\Phi\in \elle^2(\Omega,\nu;H)$ and any $f\in W^{1,2}(\Omega,\nu)$ we get
\begin{align*}
\int_{\Omega}\langle \nabla_Hf,\Phi\rangle_Hd\nu=-\int_\Omega f\diver_{\nu,\Omega}\Phi d\nu.
\end{align*}

The following two technical lemmata are crucial to show Theorems \ref{cor whole space} and \ref{cor halfspaces}. In particular, the second one is a generalization of a well known result in differential geometry, see \cite{Lan99}, \cite{BF04} and \cite{Cap16}.

\begin{lemma}\label{Conti per divergenza con Neumann}
If Hypothesis \ref{ipotesi peso} holds, then
\begin{gather}\label{conti 2}
\int_X\pa{\partial_h f-f\partial_h U-f\hat{h}}\pa{\partial_k g-g\partial_k U-g\hat{k}}d\nu=\int_X fg\partial_h\partial_k Ud\nu+\gen{h,k}_H\int_X fgd\nu+\int_X \partial_k f\partial_h gd\nu.
\end{gather}
If $\Omega\subsetneq X$, let Hypotheses \ref{ipotesi dominio} and \ref{ipotesi peso} hold true, and let $f,g\in\fcon^2_b(\Omega)$ and $h,k\in H$. Then
\begin{gather}
\notag \int_\Omega\pa{\partial_h f-f\partial_h U-f\hat{h}}\pa{\partial_k g-g\partial_k U-g\hat{k}}d\nu=\\
\label{conti 1}=\int_{G^{-1}(0)}f\pa{\partial_k g-g\trace(\partial_k U)-g\hat{k}}\frac{\trace(\partial_h G)}{\abs{\trace(\nabla_H G)}_H}e^{-\trace(U)}d\rho-\int_{G^{-1}(0)}f\partial_h g\frac{\trace(\partial_k G)}{\abs{\trace(\nabla_H G)}_H}e^{-\trace(U)}d\rho+\\
\notag +\int_\Omega fg\partial_h\partial_k Ud\nu+\gen{h,k}_H\int_\Omega fgd\nu+\int_\Omega \partial_k f\partial_h gd\nu.
\end{gather}
\end{lemma}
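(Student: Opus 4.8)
The plan is to prove the two identities by reducing everything to the Gaussian integration-by-parts formulae already recorded in the excerpt: the classical one (Section \ref{sobolev_spaces}) for the case $\Omega=X$, and the divergence theorem with traces (Theorem \ref{divergence theorem with traces}) for the case $\Omega\subsetneq X$. The key observation is that the factor $\partial_h f - f\partial_h U - f\hat h$ is, up to sign, exactly the kind of expression that appears on the left-hand side of those formulae: writing $h=\sum_i h_i e_i$ and $k=\sum_j k_j e_j$, bilinearity lets us work one coordinate pair $(e_i,e_j)$ at a time and then resum, since all sums are finite for cylindrical $f,g$ (this is where Hypothesis \ref{ipotesi peso}, via $\hat e_k\in X^*$ and the Sobolev regularity of $U$, guarantees all integrands are genuinely in $L^1$).

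First I would treat the whole-space identity \eqref{conti 2}. Apply the integration by parts formula from Section \ref{sobolev_spaces} with test function $\varphi = (\partial_k g - g\partial_k U - g\hat k)\,f$, differentiating in direction $e_h$ (I use $h,k$ as coordinate indices here after the reduction). The left-hand side becomes $\int_X \partial_h\big[f(\partial_k g - g\partial_k U - g\hat k)\big]\,d\nu$ minus the lower-order terms; expanding the $H$-derivative by the product rule produces $\int_X \partial_h f\,(\partial_k g - g\partial_k U - g\hat k)\,d\nu$ — which is what we want minus $\int_X \partial_k f\,\partial_h g\,d\nu$ after a second, symmetric application — plus a term $\int_X f\,\partial_h(\partial_k g - g\partial_k U - g\hat k)\,d\nu$. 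In this last term, $\partial_h\partial_k g$ integrates against $f$ and, combined with the analogous piece from applying integration by parts to move $\partial_h$ off $g$ inside $\int_X \partial_k f\,\partial_h g$, yields the $\int_X \partial_k f\,\partial_h g\,d\nu$ on the right. The term $-\int_X f g\,\partial_h(\partial_k U)\,d\nu$ appears with the correct sign, and the crucial cross term is $\partial_h\hat k$: since $\hat e_k = g_k\in X^*_\mu$ with $R_\mu g_k = e_k$, one has $\partial_h \hat{e}_k = \langle e_k, e_h\rangle_H = \delta_{hk}$ (equivalently, after resumming, $\int_X fg\,\partial_h \hat k\,d\nu = \langle h,k\rangle_H \int_X fg\,d\nu$), which produces the $\langle h,k\rangle_H \int_X fg\,d\nu$ term. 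Bookkeeping the signs is the only real care needed; no deep input beyond the stated IBP formula is used.

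Next, the domain identity \eqref{conti 1} is obtained the same way, but using Theorem \ref{divergence theorem with traces} in place of the whole-space formula. Each application of that theorem now contributes, besides the $\Omega$-integrals just described, a boundary term $\int_{G^{-1}(0)} \trace(\varphi)\,\trace\!\big(\partial_h G/|\nabla_H G|_H\big)e^{-\trace(U)}\,d\rho$. Taking $\varphi = f(\partial_k g - g\partial_k U - g\hat k)$ when we differentiate in direction $e_h$ gives the first boundary integral in \eqref{conti 1} (using that the trace of a product is the product of traces for cylindrical functions, and that $\trace(\partial_k g) = \partial_k g|_{G^{-1}(0)}$ by Proposition \ref{coincidenza traccia e versione precisa}); taking $\varphi = f\,\partial_h g$ when we move $\partial_k$ off $g$ gives the second boundary integral, with the minus sign coming from the fact that this application of the theorem is used to rewrite $+\int_\Omega \partial_k f\,\partial_h g\,d\nu$. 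The interior terms combine exactly as in the whole-space case.

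The main obstacle, and the place to be most careful, is the combinatorics of how the four "first-order" terms $\partial_h f$, $f\partial_h U$, $f\hat h$ (times the three analogous ones in $k$) redistribute: one must apply the integration by parts / divergence theorem to precisely the right groupings so that (a) every term with a second derivative of $f$ or $g$ that is \emph{not} $\int_\Omega \partial_k f\,\partial_h g$ cancels, (b) the $\partial_h\partial_k U$ term survives with a single $fg$ weight and the correct sign, and (c) the $\hat h,\hat k$ terms collapse to $\langle h,k\rangle_H$ via $\partial_h\hat e_k=\delta_{hk}$ without generating spurious boundary contributions. I would organize this by writing $D_h\varphi := \partial_h\varphi - \varphi\partial_h U - \varphi\hat h$ so that the IBP formula reads $\int_\Omega D_h\varphi\,d\nu = \int_{G^{-1}(0)}\trace(\varphi)\,\trace(\partial_h G/|\nabla_H G|_H)e^{-\trace U}\,d\rho$, expand $\int_\Omega (D_h f)(D_k g)\,d\nu$ by noting $(D_h f)(D_k g) = D_h\big(f\,D_k g\big) - f\,D_h(D_k g) + \text{(correction from the $\hat h$ term acting on the product)}$, and track each piece; the identity $D_h(D_k g) = D_k(D_h g) + (\partial_h\hat k - \partial_k \hat h)g = D_k(D_h g)$ since $\partial_h\hat k = \partial_k \hat h = \langle e_h,e_k\rangle_H$, together with $\partial_h\partial_k g = \partial_k\partial_h g$ for $g\in\fcon^2_b$, is what makes the symmetric cancellation work.
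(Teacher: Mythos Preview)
Your proposal is correct and follows essentially the same route as the paper: apply the integration-by-parts formula (resp.\ Theorem \ref{divergence theorem with traces}) to $\varphi=f\,(\partial_k g-g\partial_k U-g\hat k)$ in direction $h$, expand $\partial_h(\partial_k g-g\partial_k U-g\hat k)$ using $\partial_h\hat k=\langle h,k\rangle_H$, then apply the formula a second time to $\varphi=f\partial_h g$ in direction $k$ to turn $-\int f\partial_h\partial_k g+\int f\partial_h g(\partial_k U+\hat k)$ into $\int\partial_k f\,\partial_h g$ plus the second boundary term. One small clean-up: with your notation $D_h\varphi=\partial_h\varphi-\varphi\partial_hU-\varphi\hat h$, the product rule gives exactly $(D_h f)\psi=D_h(f\psi)-f\partial_h\psi$, so no ``correction from the $\hat h$ term'' is needed, and the commutator identity $D_hD_k=D_kD_h$ is not actually used---only $\partial_h\partial_k g=\partial_k\partial_h g$.
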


\begin{proof}
We will only prove \eqref{conti 1}, since the proof of \eqref{conti 2} is essentially the same. We will use Theorem \ref{divergence theorem with traces} several times. We have
\begin{gather*}
\int_\Omega\pa{\partial_h f-f\partial_h U-f\hat{h}}\pa{\partial_k g-g\partial_k U-g\hat{k}}d\nu=\displaybreak[0]\\
=\int_\Omega\partial_h f\pa{\partial_k g-g\partial_k U-g\hat{k}}d\nu-\int_\Omega \pa{f\partial_h U+f\hat{h}}\pa{\partial_k g-g\partial_k U-g\hat{k}}d\nu=\displaybreak[0]\\
=\int_\Omega\partial_h \pa{f\pa{\partial_k g-g\partial_k U-g\hat{k}}}d\nu-\int_\Omega f\partial_h\pa{\partial_k g-g\partial_k U-g\hat{k}}d\nu+\displaybreak[0]\\
-\int_\Omega \pa{f\partial_h U+f\hat{h}}\pa{\partial_k g-g\partial_k U-g\hat{k}}d\nu=
\displaybreak[0]\\
=\int_\Omega\partial_h \pa{f\pa{\partial_k g-g\partial_k U-g\hat{k}}}-f\pa{\partial_k g-g\partial_k U-g\hat{k}}\partial_h U-f\pa{\partial_k g-g\partial_k U-g\hat{k}}\hat{h}d\nu+\displaybreak[0]\\
-\int_\Omega f\partial_h\pa{\partial_k g-g\partial_k U-g\hat{k}}d\nu=\displaybreak[0]\\
=\int_{G^{-1}(0)}f\pa{\partial_k g-g\trace(\partial_k U)-g\hat{k}}\frac{\trace(\partial_h G)}{\abs{\trace(\nabla_H G)}_H}e^{-\trace(U)}d\rho-\int_\Omega f\partial_h\pa{\partial_k g-g\partial_k U-g\hat{k}}d\nu=\displaybreak[0]\\
=\int_{G^{-1}(0)}f\pa{\partial_k g-g\trace(\partial_k U)-g\hat{k}}\frac{\trace(\partial_h G)}{\abs{\trace(\nabla_H G)}_H}e^{-\trace(U)}d\rho-\int_\Omega f\partial_k\partial_h gd\nu+\int_\Omega f\partial_h g\partial_k Ud\nu+
\displaybreak[0]\\
+\int_\Omega fg\partial_h\partial_k Ud\nu+\int_\Omega f\partial_h g\hat{k}d\nu+\gen{h,k}_H\int_\Omega fgd\nu=\displaybreak[0]\\
=\int_{G^{-1}(0)}f\pa{\partial_k g-g\trace(\partial_k U)-g\hat{k}}\frac{\trace(\partial_h G)}{\abs{\trace(\nabla_H G)}_H}e^{-\trace(U)}d\rho+\int_\Omega fg\partial_h\partial_k Ud\nu+\gen{h,k}_H\int_\Omega fgd\nu+
\displaybreak[0]\\
+\int_\Omega \partial_k f\partial_h gd\nu-\int_\Omega \partial_k\pa{f\partial_h g}-f\partial_h g\partial_k U-f\partial_h g\hat{k}d\nu=\displaybreak[0]\\
=\int_{G^{-1}(0)}f\pa{\partial_k g-g\trace(\partial_k U)-g\hat{k}}\frac{\trace(\partial_h G)}{\abs{\trace(\nabla_H G)}_H}e^{-\trace(U)}d\rho-\int_{G^{-1}(0)}f\partial_h g\frac{\trace(\partial_k G)}{\abs{\trace(\nabla_H G)}_H}e^{-\trace(U)}d\rho+
\displaybreak[0]\\
+\int_\Omega fg\partial_h\partial_k Ud\nu+\gen{h,k}_H\int_\Omega fgd\nu+\int_\Omega \partial_k f\partial_h gd\nu.
\end{gather*}
\end{proof}


\begin{lemma}\label{Lemma derivata seconda G}
Assume Hypotheses \ref{ipotesi dominio}. Let $\Phi\in \mathcal{Z}(\Omega,H)$ the space defined in \eqref{definizione Z(Omega, H)}. Then for $\rho$-a.e. $x\in G^{-1}(0)$
\begin{gather}\label{roba inutilissima}
\gen{\trace(\nabla_H^2G)(x)\Phi(x),\Phi(x)}_H=-\gen{(\nabla_H\Phi(x))\Phi(x),\trace(\nabla_H G)(x)}_H.
\end{gather}
\end{lemma}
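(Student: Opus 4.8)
Identity \eqref{roba inutilissima} is the Wiener--space version of the classical fact that, for a vector field $\Phi$ tangent to a level set $G^{-1}(0)$, the normal component of the ambient derivative $(\nabla_H\Phi)\Phi$ equals $-\gen{\nabla_H^2G\,\Phi,\Phi}_H\,\abs{\nabla_HG}_H^{-2}\nabla_HG$; the underlying mechanism is that $\gen{\Phi,\nabla_HG}_H$ vanishes along $G^{-1}(0)$ and one differentiates this constraint. The plan is: $(i)$ reduce \eqref{roba inutilissima} to the assertion that the tangential derivative of the scalar function $\psi:=\gen{\Phi,\nabla_HG}_H$ vanishes $\rho$-a.e.\ on $G^{-1}(0)$; $(ii)$ prove this pointwise at $\rho$-a.e.\ $x_0\in G^{-1}(0)$ by constructing, from Hypothesis \ref{ipotesi dominio}\eqref{ipo dominio per dini}, a chart of $G^{-1}(0)$ near $x_0$ along which the constraint can be differentiated.

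For step $(i)$, write $\Phi=\sum_{i=1}^n\varphi_ih_i$ with $\varphi_i\in\fcon_b^2(\Omega)$ and $h_i$ orthonormal, and set $\psi:=\sum_i\varphi_i\,\partial_{h_i}G$, using throughout the fixed precise versions of $\nabla_HG$ and $\nabla_H^2G$. By Hypothesis \ref{ipotesi dominio}\eqref{ipo dominio precisione} and \eqref{ipo dominio per dini} together with Proposition \ref{coincidenza traccia e versione precisa}, for $\rho$-a.e.\ $x\in G^{-1}(0)$ the function $\nabla_HG$ is differentiable along $H$ at $x$ with differential $\nabla_H^2G(x)$, and $\trace(\nabla_HG)(x)=\nabla_HG(x)$, $\trace(\nabla_H^2G)(x)=\nabla_H^2G(x)$. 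Applying the product rule to $\psi=\sum_i\varphi_i\,\partial_{h_i}G$ at such a point (legitimate since the $\varphi_i$ are twice differentiable along $H$ and each $\partial_{h_i}G$ is differentiable along $H$ there) yields
\[
\gen{\nabla_H\psi(x),\Phi(x)}_H=\gen{(\nabla_H\Phi(x))\Phi(x),\nabla_HG(x)}_H+\gen{\nabla_H^2G(x)\Phi(x),\Phi(x)}_H,
\]
so \eqref{roba inutilissima} is equivalent to $\gen{\nabla_H\psi(x),\Phi(x)}_H=0$ for $\rho$-a.e.\ $x\in G^{-1}(0)$; moreover, by the definition of $\mathcal{Z}(\Omega,H)$ in \eqref{definizione Z(Omega, H)} and the identification above, $\psi(x)=\gen{\Phi(x),\trace(\nabla_HG)(x)}_H=0$ for $\rho$-a.e.\ $x\in G^{-1}(0)$.

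For step $(ii)$ fix $x_0$ in the full-$\rho$-measure subset of $G^{-1}(0)$ where $n:=\nabla_HG(x_0)\neq0$ (Hypothesis \ref{ipotesi dominio}\eqref{ipo dominio non degeneratezza 2}), $G$ is twice differentiable along $H$ at $x_0$, $\psi(x_0)=0$ and $\gen{\Phi(x_0),n}_H=0$. Since $G$ is convex with closed sublevel sets (Hypothesis \ref{ipotesi dominio}\eqref{ipo dominio convessita},\eqref{ipo dominio convessita e chiusura}), $(v,t)\mapsto G(x_0+v+tn)$ is convex, lower semicontinuous and finite on $n^{\perp}\times\R$, hence continuous; for $v=0$ it has value $0$ and $t$-derivative $\abs{n}_H^2>0$ at $t=0$, hence changes sign, so for $v$ in the tangent hyperplane $n^{\perp}\subseteq H$ small enough there is a unique small $t(v)$ with $G(x_0+v+t(v)n)=0$, with $t$ continuous and $t(0)=0$. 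The differentiability of $G$ along $H$ at $x_0$ gives $0=\gen{n,v}_H+t(v)\abs{n}_H^2+o(\abs{v}_H+\abs{t(v)}_H)$, and since $\gen{n,v}_H=0$ on $n^{\perp}$ this forces $t(v)=o(\abs{v}_H)$. Hence $\Gamma(v):=x_0+v+t(v)n$ is a chart of $G^{-1}(0)$ near $x_0$, differentiable at $0$, with $\Gamma(0)=x_0$ and $D\Gamma(0)$ the inclusion $n^{\perp}\hookrightarrow H$. Then $\psi\circ\Gamma$ is differentiable at $0$ with $D(\psi\circ\Gamma)(0)=\gen{\nabla_H\psi(x_0),\,\cdot\,}_H|_{n^{\perp}}$, it is continuous at $0$ with value $\psi(x_0)=0$, and it vanishes $\Gamma^{*}\rho$-a.e.\ near $0$; since $\Gamma^{*}\rho$ has the same null sets as the Gaussian measure on $n^{\perp}$, the elementary fact that a function which is a.e.\ zero and differentiable at a point where it is continuous has zero differential there gives $D(\psi\circ\Gamma)(0)=0$, i.e.\ $\nabla_H\psi(x_0)\in\R\,n=\R\,\nabla_HG(x_0)$. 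As $\Phi(x_0)\in n^{\perp}$, we conclude $\gen{\nabla_H\psi(x_0),\Phi(x_0)}_H=0$, which is \eqref{roba inutilissima}.

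The main obstacle is step $(ii)$: the constraint $\psi=0$ holds only $\rho$-a.e.\ on $G^{-1}(0)$, a $\mu$-negligible set, so it can be differentiated only after pulling it back by an honest chart; producing such a chart from the merely pointwise (Dini-type) differentiability of Hypothesis \ref{ipotesi dominio}\eqref{ipo dominio per dini}, rather than from a $C^2$ or classical implicit-function hypothesis, is exactly what forces the use of that hypothesis together with the convexity of $G$. A second technical point is the compatibility of the Feyel--de La Pradelle surface measure with the chart: one must know that $\Gamma^{*}\rho$ is absolutely continuous, with positive density, with respect to the Gaussian measure on the tangent hyperplane, so that the pointwise vanishing of $D(\psi\circ\Gamma)(0)$ can be extracted. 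These two points constitute the technical core of the argument.
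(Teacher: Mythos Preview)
Your Step (i) --- reducing \eqref{roba inutilissima} via the product rule to the vanishing of the directional derivative of $\psi=\gen{\Phi,\nabla_HG}_H$ in the direction $\Phi(x_0)$ --- is correct and is exactly the content of the paper's final step.

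Step (ii), however, has a genuine gap that cannot be repaired as written. Your chart $\Gamma$ maps a ball in $n^\perp\subset H$ into $x_0+H$; in infinite dimensions the affine Cameron--Martin coset $x_0+H$ is $\mu$-null, and one sees from the one-dimensional disintegration that defines the Feyel--de~La~Pradelle measure that $(x_0+H)\cap G^{-1}(0)$ is $\rho$-null as well. Consequently $\Gamma^*\rho$ is the zero measure, the assertion ``$\psi\circ\Gamma$ vanishes $\Gamma^*\rho$-a.e.'' is vacuous, and nothing about $D(\psi\circ\Gamma)(0)$ can be concluded from it. You label this as a ``technical point'' at the end, but it is the heart of the argument, and the claimed absolute continuity of $\Gamma^*\rho$ with respect to a Gaussian on $n^\perp$ is simply false when $\dim X=\infty$.

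The paper circumvents this by constructing not a chart but a single $C^1$ curve in $G^{-1}(0)$ tangent to $\Phi(x_0)$, and by never invoking the surface measure in the pointwise argument. Concretely: it applies the implicit function theorem (using the $H$-differentiability of Hypothesis \ref{ipotesi dominio}\eqref{ipo dominio per dini}) to write $G^{-1}(0)$ near $x_0$ as a $C^1$ graph over $h_1^\perp$ for a direction $h_1$ with $\partial_{h_1}G(x_0)\neq0$; it then solves, via the Banach fixed-point theorem, the ODE $\gamma'(t)=\Phi_{x_0}(\gamma(t))$ in $h_1^\perp$, where $\Phi_{x_0}$ is the $h_1^\perp$-projection of $\Phi(x_0+\cdot)$; the lift $\sigma_{x_0}$ of $\gamma$ to the graph stays on $G^{-1}(0)$ and satisfies $\sigma_{x_0}'(0)=\Phi(x_0)$ (the computation of this last equality uses the tangency condition only at $x_0$). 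Differentiating $t\mapsto\gen{\Phi(x_0+\sigma_{x_0}(t)),\nabla_HG(x_0+\sigma_{x_0}(t))}_H$ at $t=0$ then yields \eqref{roba inutilissima}. The essential difference from your proposal is that the curve is an explicit $C^1$ object inside $G^{-1}(0)$ along which one differentiates directly, avoiding any attempt to transport $\rho$ through an $H$-valued parametrization.
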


\begin{proof}
The proof is rather long and it will be split into various steps. Let $\{h_i\}_{i\in\N}$ be the orthonormal basis of $H$ associated with $\Phi$ given by the definition of the space $\mathcal{Z}(\Omega,H)$.
By Hypothesis \ref{ipotesi dominio}, Proposition \ref{coincidenza traccia e versione precisa} and the very definition of $\mathcal{Z}(\Omega,H)$ the set
\begin{gather}\label{boh}
A=\set{x\in G^{-1}(0)\tc\begin{array}{c}
\text{$G$ is continuous and twice differentiable along $H$ at $x$,}\\
\abs{\nabla_H G(x)}_H\neq 0,\ G(x)=\trace G(x),\ \nabla_H G(x)=\trace(\nabla_H G)(x),\\
\nabla^2_H G(x)=\trace(\nabla^2_H G)(x),\ \gen{\Phi(x),\trace(\nabla_H G)(x)}_H=0.
\end{array}}
\end{gather}
has full $\rho$ measure. We will prove that \eqref{roba inutilissima} holds for every point $x_0$ belonging to $A$. By \eqref{boh} we have $\nabla_H G(x_0)\neq 0$, so there exists $n_0\in\N$ such that
\[\partial_{n_0}G(x_0)\neq 0.\]
Without loss of generality, we can assume that $n_0=1$. By the very definition of the space $\mathcal{Z}(\Omega,H)$ there exist $K(\Phi)>0$, $k\in\N$ and $(\varphi_i)_{i=1}^k\subseteq \fcon^2_b(\Omega)$ such that for every $k_1,k_2\in H$ it holds
\begin{align}
\abs{\Phi(x_0+k_1)-\Phi(x_0+k_2)}_H\leq K(\Phi)\abs{k_1-k_2}_H,
\label{cioccolato}
\end{align}
and $\Phi(x)=\sum_{i=1}^{k}\varphi_i(x)h_i$. For $i>k$ we set $\varphi_i(x)\equiv 0$.
\begin{enumerate}
\item[\textbf{Step 1:}] Let us consider the space
\[h_1^\perp=\set{\ol{h}\in H\tc \gen{\ol{h},h_1}_H=0},\]
endowed with the Hilbert space norm $\abs{\ol{h}}_{h_1^\perp}=\sum_{i=2}^{+\infty}\gen{\ol{h},h_i}_{H}^2$. We denote its inner product by $\gen{\cdot,\cdot}_{h_1^\perp}$ and recall that $\{h_i\}_{i\geq 2}$ is an orthonormal basis for $h_1^\perp$ and $H=h_1^\perp\oplus\linspan\set{h_1}$. We want to apply the implicit function theorem to a function defined on $h_1^\perp\oplus\R$. Let $G_{x_0}:h_1^\perp\oplus\R\ra\R$ be the function defined as
\[G_{x_0}((\ol{h},\alpha)):=G(x_0+\ol{h}+\alpha h_1).\]
Observe that $G_{x_0}((0,0))=G(x_0)=0$ and
\[D_2G_{x_0}((0,0))=\partial_1 G(x_0)\neq 0,\]
where $D_2$ is the derivative with respect the second variable. Since \eqref{nutella} implies that $G_{x_0}$ is Fr\'echet differentiable at $0$, applying the implicit function theorem, see \cite[Theorem 5.9]{Lan99}, we get an open neighborhood $U_0\subseteq h_1^\perp$ of the origin and a continuously Fr\'echet differentiable function $g_{x_0}: U_0\ra \R$ such that for every $\ol{h}\in U_0$ we have
\begin{gather}\label{roba inutile}
g_{x_0}(0)=0,\quad\quad G_{x_0}(\ol{h},g_{x_0}(\ol{h}))=0.
\end{gather}
Moreover, the function $g_{x_0}:U_0\ra\R$ satisfying \eqref{roba inutile} is uniquely determined. Without loss of generality we may assume that $U_0$ is an open ball centered at the origin of radius $R$. We remark that \eqref{roba inutile} implies that for every $\ol{h}\in h_1^\perp$
\begin{gather}\label{roba inutile 2}
G(x_0+\ol{h}+g_{x_0}(\ol{h})h_1)=G_{x_0}(\ol{h},g_{x_0}(\ol{h}))=0.
\end{gather}

\item[\textbf{Step 2:}] We denote by $D_{h_1^\perp}g_{x_0}(0)$ the Fr\'echet derivative of $g_{x_0}$ at the origin. For $t>0$ sufficiently small and by \eqref{roba inutile 2}, for any $i\geq 2$ we get
\begin{gather*}
0=G_{x_0}(th_i,g_{x_0}(th_i))-G_{x_0}(0,g_{x_0}(0))=\\
=G(x_0+th_i+g_{x_0}(th_i)h_1)-G(x_0+g_{x_0}(0)h_1)=\\
=G\pa{x_0+th_i+g_{x_0}(0)h_1+t\gen{D_{h_1^\perp}g_{x_0}(0),h_i}_{h_1^\perp}h_1+o(t)h_1}-G(x_0+g_{x_0}(0)h_1)=\\
=\gen{\nabla_HG(x_0),th_i+\gen{D_{h_1^\perp}g_{x_0}(0),th_i}_{h_1^\perp}+o(t)h_1}_H.
\end{gather*}
Letting $t$ go to zero, for any $i\geq 2$ we get
\begin{gather}\label{roba inutile 3}
\gen{D_{h_1^\perp}g_{x_0}(0),h_i}_{h_1^\perp}=-\frac{\partial_i G(x_0)}{\partial_1 G(x_0)}.
\end{gather}

\item[\textbf{Step 3:}] The vector field $\Phi_{x_0}(\ol{h})=\sum_{i=2}^{+\infty}\varphi_i(x_0+\ol{h})h_i$ is defined from $h_1^\perp$ to itself. Let $\delta$ be a positive real number
which satisfies
\[\delta\leq \frac{R}{2(K(\Phi)R+\abs{\Phi(x_0)}_H)},\]
where $K(\Phi)$ has been introduced in \eqref{cioccolato}.
We consider the complete metric space $\con_b([-\delta,\delta],\overline{U_0})$, i.e. the set
\[\con_b([-\delta,\delta],\ol{U_0}):=\set{f:\sq{-\delta,\delta}\ra \overline{U_0}\tc f\text{ is continuous}}  
,\]
endowed with the complete metric \(d(f,g):=\sup_{t\in[-\delta,\delta]}\abs{f(t)-g(t)}_{h_1^\perp}\). Let $\Gamma:\con_b([-\delta,\delta],\ol{U_0})\ra \con_b([-\delta,\delta],h_1^\perp)$ be the function defined as follows:
\begin{align}\label{roba strana}
\Gamma(\gamma)(t)=\int_0^t\Phi_{x_0}(\gamma(s))ds,
\end{align}
for any $t\in[-\delta,\delta]$. The integral in \eqref{roba strana} should be understood in the Bochner sense. We look for a fixed point of $\Gamma$ in $\con_b([-\delta,\delta],\ol{U_0})$.  We want to use Banach fixed-point theorem, so
\begin{gather*}
d(\Gamma(\gamma_1),\Gamma(\gamma_2))=\sup_{t\in[-\delta,\delta]}\abs{\Gamma(\gamma_1)(t)-\Gamma(\gamma_2)(t)}_{h_1^\perp}=\displaybreak[0]\\
=\sup_{t\in [-\delta,\delta]}\abs{\int_0^t\Phi_{x_0}(\gamma_1(s))ds-\int_0^t\Phi_{x_0}(\gamma_2(s))ds}_{h_1^\perp}
\leq \sup_{t\in [-\delta,\delta]}\int_0^t\abs{\Phi_{x_0}(\gamma_1(s))-\Phi_{x_0}(\gamma_2(s))}_{h_1^\perp}ds\leq \displaybreak[0]\\
\leq \sup_{t\in [-\delta,\delta]}\int_0^t\abs{\Phi(x_0+\gamma_1(s))-\Phi(x_0+\gamma_2(s))}_{H}ds\leq K(\Phi)\sup_{t\in[-\delta,\delta]}\int_0^t\abs{\gamma_1(s)-\gamma_2(s)}_{h_1^\perp}ds\leq\displaybreak[0]\\
\leq \delta K(\Phi)\sup_{t\in[-\delta,\delta]}\abs{\gamma_1(t)-\gamma_2(t)}_{h_1^\perp}\leq\frac12\sup_{t\in[-\delta,\delta]}\abs{\gamma_1(t)-\gamma_2(t)}_{h_1^\perp}
=\frac12d(\gamma_1,\gamma_2).
\end{gather*}
Therefore $\Gamma$ is a contraction in $\con_b([-\delta,\delta],\ol{U_0})$. We claim that $\Gamma$ maps $\con_b([-\delta,\delta],\ol{U_0})$ into itself. The continuity of $\Gamma(\gamma)(t)$ is clear, and
\begin{gather*}
\sup_{t\in[-\delta,\delta]}\abs{\Gamma(\gamma)(t)}_{h_1^\perp}\leq \sup_{t\in[-\delta,\delta]}\abs{\Gamma(\gamma)(t)-\Gamma(0)(t)}_{h_1^\perp}+\sup_{t\in[-\delta,\delta]}\abs{\Gamma(0)(t)}_{h_1^\perp}\leq\displaybreak[0]\\
\leq \frac R2+\sup_{t\in[-\delta,\delta]}\abs{\int_0^t\Phi_{x_0}(0)ds}_{h_1^\perp}\leq \frac R2+\frac R2=R.
\end{gather*}
By the Banach fixed-point theorem there exists a unique fixed point $\gamma_{x_0}\in\con_b([-\delta,\delta],\ol{U_0})$ of $\Gamma$. We remark that $\gamma_{x_0}(0)=0$ and that, up to replace $\delta>0$ with a smaller one, we can assume that $\gamma_{x_o}([-\delta,\delta])\subseteq U_0$.

\item[\textbf{Step 4:}] We consider the function $\psi_{x_0}:U_0\ra H$, defined as $\psi_{x_0}(\ol{h})=\ol{h}+g_{x_0}(\ol{h})h_1$. We now want to evaluate the function $\sigma_{x_0}:(-\delta,\delta)\ra H$ defined as
\[\sigma_{x_0}(t)=\psi_{x_0}(\gamma_{x_0}(t)),\]
and its derivative at the origin. Observe that
\begin{gather*}
\sigma_{x_0}(t)=\psi_{x_0}(\gamma_{x_0}(t))=\psi_{x_0}\pa{\int_0^t\Phi_{x_0}(\gamma_{x_0}(s))ds}=\\
=\int_0^t\Phi_{x_0}(\gamma_{x_0}(s))ds+g_{x_0}\pa{\int_0^t\Phi_{x_0}(\gamma_{x_0}(s))ds}h_1,
\end{gather*}
so $\sigma_{x_0}(0)=0$. Furthermore
\begin{gather*}
\sigma_{x_0}'(0)=\lim_{t\ra 0}\frac{\sigma(t)-\sigma(0)}{t}=\lim_{t\ra 0}\frac{1}{t}\pa{\int_0^t\Phi_{x_0}(\gamma_{x_0}(s))ds+g_{x_0}\pa{\int_0^t\Phi_{x_0}(\gamma_{x_0}(s))ds}h_1}=\displaybreak[0]\\
=\Phi_{x_0}(\gamma_{x_0}(0))+\lim_{t\ra 0}\frac{1}{t}\pa{g_{x_0}\pa{\int_0^t\Phi_{x_0}(\gamma_{x_0}(s))ds}h_1}=\displaybreak[0]\\
=\Phi_{x_0}(0)+\gen{D_{h_1^\perp}g_{x_0}\pa{0},\Phi_{x_0}(0)}_{h_1^\perp}h_1
=\sum_{i=2}^{+\infty}\varphi_i(x_0)h_i+\pa{\sum_{i=2}^{+\infty}\varphi_i(x_0)\gen{D_{h_1^\perp}g_{x_0}\pa{0},h_i}_{h_1^\perp}}h_1,
\end{gather*}
by \eqref{boh} and \eqref{roba inutile 3} we get
\begin{gather*}
\sigma_{x_0}'(0)=\sum_{i=2}^{+\infty}\varphi_i(x_0)h_i-\pa{\sum_{i=2}^{+\infty}\varphi_i(x_0)\frac{\partial_i G(x_0)}{\partial_1 G(x_0)}}h_1=\sum_{i=1}^{+\infty}\varphi_i(x_0)h_i=\Phi(x_0).
\end{gather*}
We finally claim that for every $t\in(-\delta,\delta)$ we have $G(x_0+\sigma_{x_0}(t))=0$.
Indeed, recalling that $\Gamma(\gamma_{x_0})(t)\in U_0$ and \eqref{roba inutile 2}, we get
\begin{gather*}
G(x_0+\sigma_{x_0}(t))=G(x_0+\psi_{x_0}(\gamma_{x_0}(t)))=G(x_0+\Gamma(\gamma_{x_0})(t)+g_{x_0}(\Gamma(\gamma_{x_0})(t))h_1)=0.
\end{gather*}

\item[\textbf{Step 5:}] Now We are able to prove \eqref{roba inutilissima}. Indeed, from \eqref{bombolone}, \eqref{divergenza su dominio}, $\sigma_{x_0}(0)=0$ and $\sigma_{x_0}'(0)=\Phi(x_0)$ we deduce that
\begin{align*}
\frac d{dt}\nabla_HG(x_0+\sigma_{x_0}(t))_{|t=0}
= & \lim_{t\rightarrow0}\frac{\nabla_HG(x_0+\sigma_{x_0}(t))-\nabla_HG(x_0)}{t} \\
= & \lim_{t\rightarrow0}\frac{\nabla_HG(x_0+\Phi(x_0)t+\Phi(x_0)o(t))-\nabla_HG(x_0)}{t} \\
= & \partial_{\Phi(x_0)}\nabla_H^2G(x_0)=\nabla_H^2G(x_0)(\Phi(x_0)).
\end{align*}
Then, we have
\begin{gather*}
0=\frac{d}{dt}\left(\gen{\Phi(x_0+\sigma_{x_0}(t)),\nabla_H G(x_0+\sigma_{x_0}(t))}_H\right)_{|t=0}=\\
=\gen{\nabla_H\Phi(x_0)\Phi(x_0),\nabla_H G(x_0)}_H
+\gen{\Phi(x_0),\nabla_H^2 G(x_0)\Phi(x_0)}_H.
\end{gather*}
\end{enumerate}
\end{proof}

In the next theorem we prove that the space $Z^{1,2}_U(\Omega,\nu;H)$ is contained in the domain of the divergence, where $Z^{1,2}_U(\Omega,\nu;H)$ is the completion of the space $\mathcal{Z}(\Omega,H)$ with respect to the norm defined in \eqref{norma divergenza su dominio}.

\begin{thm}\label{divergence for W12}
Assume that either Hypotheses \ref{ipotesi dominio} and \ref{ipotesi peso} hold or Hypothesis \ref{ipotesi peso} holds and $\Omega$ is the whole space. Every vector field $\Phi\in Z^{1,2}_U(\Omega,\nu;H)$ has a divergence $\diver_{\nu,\Omega} \Phi\in L^2(\Omega,\nu)$ and for every $f\in W^{1,2}(\Omega,\nu)$, the following equality holds:
\[\int_\Omega\gen{\nabla_H f(x),\Phi(x)}_Hd\nu(x)=-\int_\Omega f(x)\diver_{\nu,\Omega} \Phi(x)d\nu(x).\]
Furthermore, if $\varphi_n=\gen{\Phi,h_n}_H$ for every $n\in\N$ where $(h_n)_{n\in\N}$ is an orthonormal basis of $H$, then
\begin{gather}\label{Formula divergenza}
\diver_{\nu,\Omega} \Phi=\sum_{n=1}^{+\infty}\pa{\partial_n\varphi_n-\varphi_n\partial_nU-\varphi_n\hat{h}_n},
\end{gather}
where the series converges in $\elle^2(\Omega,\nu)$. In addition $\norm{\diver_\nu \Phi}_{\elle^2(\Omega,\nu)}\leq \norm{\Phi}_{Z_U^{1,2}(\Omega,\nu;H)}$.
\end{thm}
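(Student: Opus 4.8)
The strategy is to establish everything on the dense subspace $\mathcal{Z}(\Omega,H)$ and then pass to the limit. Fix $\Phi=\sum_{i=1}^{m}\varphi_i h_i\in\mathcal{Z}(\Omega,H)$, where (after Gram--Schmidt) the $h_i$ are orthonormal; complete $\{h_i\}$ to an orthonormal basis of $H$, set $\varphi_i\equiv0$ for $i>m$, $a_i:=\partial_i\varphi_i-\varphi_i\partial_iU-\varphi_i\hat h_i$, and $g:=\sum_{i=1}^m a_i$. Each $a_i$ lies in $\elle^2(\Omega,\nu)$: $\varphi_i\in\fcon_b^2(\Omega)$ is bounded, and under Hypothesis~\ref{ipotesi peso} a Hölder estimate using $e^{-U}\in\elle^{r}(X,\mu)$ for all $r<t$, together with $\nabla_HU\in\elle^{t}(X,\mu;H)$ and $\hat h_i\in\elle^{p}(X,\mu)$ for all $p$, gives $\partial_iU,\hat h_i\in\elle^2(\Omega,\nu)$. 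First I would prove the identity $\int_\Omega\gen{\nabla_Hf,\Phi}_H\,d\nu=-\int_\Omega fg\,d\nu$ for every $f\in\fcon^\infty_b(\Omega)$: write $\partial_if\,\varphi_i=\partial_i(f\varphi_i)-f\,\partial_i\varphi_i$, apply Theorem~\ref{divergence theorem with traces} (in the direction $h_i$) to $f\varphi_i\in W^{1,2}(\Omega,\nu)$ — legitimate since $2>\frac{t-1}{t-2}$ — and sum over $i$; the resulting surface integral equals $\int_{G^{-1}(0)}\trace(f)\,\gen{\trace(\Phi),\trace(\nabla_H G)}_H\,\abs{\trace(\nabla_H G)}_H^{-1}e^{-\trace(U)}\,d\rho$, which vanishes because $\gen{\Phi,\trace(\nabla_H G)}_H=0$ $\rho$-a.e.\ by the definition of $\mathcal{Z}(\Omega,H)$ (using Proposition~\ref{coincidenza traccia e versione precisa} to replace $\trace(\varphi_i)$ by $\varphi_i|_{G^{-1}(0)}$). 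In the case $\Omega=X$ the same manipulation with the Gaussian integration-by-parts formula gives the identity with no surface term. Hence $\Phi$ has divergence, $\diver_{\nu,\Omega}\Phi=g$, and \eqref{Formula divergenza} holds for $\Phi\in\mathcal{Z}(\Omega,H)$; since $\nabla_H\Phi(x)$ has finite rank, the series in \eqref{Formula divergenza} is a finite sum in the adapted basis and converges in $\elle^2(\Omega,\nu)$ to the same element in any orthonormal basis.

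The core of the argument is the estimate $\norm{\diver_{\nu,\Omega}\Phi}_{\elle^2(\Omega,\nu)}\le\norm{\Phi}_{Z_U^{1,2}(\Omega,\nu;H)}$ for $\Phi\in\mathcal{Z}(\Omega,H)$. Expand $\norm{g}_{\elle^2(\Omega,\nu)}^2=\sum_{i,j}\int_\Omega a_i a_j\,d\nu$ and apply \eqref{conti 1} of Lemma~\ref{Conti per divergenza con Neumann} with $f=\varphi_i$, $g=\varphi_j$, $h=h_i$, $k=h_j$ (or \eqref{conti 2} if $\Omega=X$). After summing over $i,j$: the volume terms reassemble as $\int_\Omega\gen{\nabla_H^2U\,\Phi,\Phi}_H\,d\nu+\int_\Omega\abs{\Phi}_H^2\,d\nu+\sum_{i,j}\int_\Omega\partial_j\varphi_i\,\partial_i\varphi_j\,d\nu$; the first family of surface integrals sums to $\int_{G^{-1}(0)}\gen{\trace(\Phi),\trace(\nabla_H G)}_H(\cdots)\,d\rho=0$ by the Neumann condition; and the second sums to $-\int_{G^{-1}(0)}\gen{(\nabla_H\Phi)\Phi,\trace(\nabla_H G)}_H\,\abs{\trace(\nabla_H G)}_H^{-1}e^{-\trace(U)}\,d\rho$, which by Lemma~\ref{Lemma derivata seconda G} equals $+\int_{G^{-1}(0)}\gen{\trace(\nabla_H^2G)\Phi,\Phi}_H\,\abs{\trace(\nabla_H G)}_H^{-1}e^{-\trace(U)}\,d\rho$ — precisely the boundary contribution in \eqref{norma divergenza su dominio}. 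Finally, by the Cauchy--Schwarz inequality in $\ell^2(\N^2;\elle^2(\Omega,\nu))$ and the identity $\sum_{i,j}\abs{\partial_j\varphi_i(x)}^2=\norm{\nabla_H\Phi(x)}_{\mathcal{H}_2}^2$, we get $\sum_{i,j}\int_\Omega\partial_j\varphi_i\,\partial_i\varphi_j\,d\nu\le\int_\Omega\norm{\nabla_H\Phi}_{\mathcal{H}_2}^2\,d\nu$. Comparing with \eqref{norma divergenza su dominio} yields $\norm{g}_{\elle^2(\Omega,\nu)}^2\le\norm{\Phi}_{Z_U^{1,2}(\Omega,\nu;H)}^2$. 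I expect this boundary bookkeeping — the cancellation of the first surface integral via the Neumann condition, and the differential-geometric substitution of Lemma~\ref{Lemma derivata seconda G} in the second — to be the main obstacle; the rest is linear algebra.

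To finish, let $\Phi\in Z_U^{1,2}(\Omega,\nu;H)$ and take $\Phi^{(k)}\in\mathcal{Z}(\Omega,H)$ with $\Phi^{(k)}\to\Phi$ in the $Z_U^{1,2}$-norm. By the estimate, $(\diver_{\nu,\Omega}\Phi^{(k)})_k$ is Cauchy in $\elle^2(\Omega,\nu)$, hence converges to some $g\in\elle^2(\Omega,\nu)$; since $\nabla_Hf\in\elle^\infty(\Omega;H)$ for $f\in\fcon^\infty_b(\Omega)$, passing to the limit in $\int_\Omega\gen{\nabla_Hf,\Phi^{(k)}}_H\,d\nu=-\int_\Omega f\,\diver_{\nu,\Omega}\Phi^{(k)}\,d\nu$ shows $\Phi\in D(\diver_{\nu,\Omega})$, $\diver_{\nu,\Omega}\Phi=g$, and $\norm{g}_{\elle^2(\Omega,\nu)}\le\norm{\Phi}_{Z_U^{1,2}(\Omega,\nu;H)}$. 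The identity for $f\in W^{1,2}(\Omega,\nu)$ then follows by density of $\fcon^\infty_b(\Omega)$, both sides being continuous for the $W^{1,2}(\Omega,\nu)$-norm. For the series \eqref{Formula divergenza} for a general $\Phi$, one writes $\diver_{\nu,\Omega}\Phi=\lim_k\diver_{\nu,\Omega}\Phi^{(k)}$, notes that \eqref{Formula divergenza} (with $\elle^2$-convergent series) already holds for each $\Phi^{(k)}$, and controls the tail of the truncated series $\sum_{n\le N}(\partial_n\gen{\Phi,h_n}_H-\gen{\Phi,h_n}_H\partial_nU-\gen{\Phi,h_n}_H\hat h_n)$ by applying the a priori estimate to $\Phi-\Phi^{(k)}$ — which still satisfies the Neumann condition because the trace operator is continuous from $W^{1,2}(\Omega,\nu;H)$ into $\elle^1(G^{-1}(0),e^{-U}\rho;H)$ (Proposition~\ref{trace continuity}) — together with the convergence $\sum_{n\le N}\int_\Omega\eta\,a_n\,d\nu\to\int_\Omega\eta\,\diver_{\nu,\Omega}\Phi\,d\nu$ for $\eta\in\fcon^\infty_b(\Omega)$, obtained as in the first step from Theorem~\ref{divergence theorem with traces} and dominated convergence on $G^{-1}(0)$.
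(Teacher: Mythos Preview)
Your proposal is correct and follows essentially the same route as the paper: establish the divergence formula and the $\elle^2$-bound on $\mathcal{Z}(\Omega,H)$ via Theorem~\ref{divergence theorem with traces} and Lemmata~\ref{Conti per divergenza con Neumann}--\ref{Lemma derivata seconda G}, then pass to the completion by a Cauchy-sequence argument and extend to $f\in W^{1,2}(\Omega,\nu)$ by density. The only cosmetic difference is that the paper recognises $\sum_{i,j}\int_\Omega\partial_j\varphi_i\,\partial_i\varphi_j\,d\nu=\int_\Omega\mathrm{trace}_H\bigl((\nabla_H\Phi)^2\bigr)d\nu$ and bounds it by $\int_\Omega\norm{\nabla_H\Phi}_{\mathcal H_2}^2\,d\nu$ via the trace-class inequality, whereas you use Cauchy--Schwarz in $\ell^2(\N^2)$; these are equivalent, and your extra discussion of the series \eqref{Formula divergenza} for general $\Phi$ is in fact more explicit than what the paper writes.
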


\begin{proof}
We prove the theorem assuming Hypotheses \ref{ipotesi dominio} and \ref{ipotesi peso} hold, since the case when Hypothesis \ref{ipotesi peso} holds and $\Omega$ is the whole space can be proved in a similar way. We start with a preliminary computation. Let $\Phi\in \mathcal{Z}(\Omega,H)$, so there exists an orthonormal basis $\set{h_i}_{i\in\N}$ of $H$ such that  $\Phi=\sum_{i=1}^{n} \varphi_ih_i$ for some $n\in\N$ and $\varphi_i\in\fcon^{2}_b(\Omega)$ for every $i=1,\ldots,n$. In addition $\gen{\Phi(x),\nabla_H G(x)}_H=0$ for $\rho$-a.e $x\in G^{-1}(0)$. By the integration by parts formula if $f\in \fcon^\infty_b(\Omega)$ we have
\begin{gather}\label{conti dentro teo divergenza}
\begin{array}{c}
\displaystyle\int_\Omega\gen{\nabla_H f,\Phi}_Hd\nu=\int_\Omega\sum_{i=1}^{n}\partial_i f\varphi_id\nu=\sum_{i=1}^{n}\int_\Omega\partial_i f\varphi_id\nu=\sum_{i=1}^{n}\pa{\int_\Omega\partial_i (f\varphi_i)d\nu-\int_\Omega f\partial_i\varphi_id\nu}=\displaybreak[0]\\
\displaystyle=\sum_{i=1}^{n}\pa{\int_{G^{-1}(0)}f\varphi_i\frac{\trace(\partial_i G)}{\abs{\trace(\nabla_H G)}_H}e^{-\trace(U)}d\rho-\int_\Omega f\pa{\partial_i\varphi_i-\varphi_i\partial_iU-\varphi_i\hat{h}_i}d\nu}=\displaybreak[0]\\
\displaystyle=\int_{G^{-1}(0)}\gen{\Phi,\trace(\nabla_H G)}_H\frac{fe^{-\trace(U)}}{\abs{\trace(\nabla_H G)}_H}d\rho-\sum_{i=1}^{n}\pa{\int_\Omega f\pa{\partial_i\varphi_i-\varphi_i\partial_iU-\varphi_i\hat{h}_i}d\nu}=\displaybreak[0]\\
\displaystyle=-\sum_{i=1}^{n}\pa{\int_\Omega f\pa{\partial_i\varphi_i-\varphi_i\partial_iU-\varphi_i\hat{h}_i}d\nu}.
\end{array}
\end{gather}
So we have
\[\diver_{\nu,\Omega} \Phi=\sum_{i=1}^{n}\pa{\partial_i\varphi_i-\varphi_i\partial_iU-\varphi_i\hat{h}_i}.\]
We recall the definition of the trace operator for nuclear operators $A$. Let $x\in \Omega$ and let $\{h_n\}_{n\in\N}$ be an orthonormal basis of $H$; we say that $A$ is a trace class operator if $\sum_{n=1}^\infty\langle (A^*A)^{1/2}h_n,h_n\rangle_H$ is finite, and we set $\text{trace}_H(A):=\sum_{n=1}^\infty\langle (A^*A)^{1/2}h_n,h_n\rangle_H$. In particular, $(\nabla_H\Phi)^2$ is a trace class operator and \(\text{trace}_H(\nabla_H\Phi(x)^2)\leq \norm{\nabla_H\Phi(x)}^2_{\mathcal{H}_2}\) (see \cite[Appendix A.2]{Bog98}).
By Lemmata \ref{Conti per divergenza con Neumann} and \ref{Lemma derivata seconda G}
\begin{gather}
\displaystyle\int_\Omega\pa{\diver_{\nu,\Omega} \Phi}^2d\nu=\sum_{i=1}^n\int_\Omega\abs{\varphi_i}^2d\nu+\sum_{i=1}^n\sum_{j=1}^n\int_\Omega\partial_j
\varphi_i\partial_i \varphi_jd\nu+\sum_{i=1}^n\sum_{j=1}^n\int_\Omega \varphi_i\varphi_j\partial_i\partial_jUd\nu+\displaybreak[0]\notag\\
\displaystyle+\sum_{i=1}^n\sum_{j=1}^n\int_{G^{-1}(0)}\varphi_i(\partial_j\varphi_j-\varphi_j\trace(\partial_j U)-\varphi_j\hat{h}_j)\frac{\trace(\partial_i G)}{\abs{\trace(\nabla_H G)}}e^{-\trace(U)}d\rho+\displaybreak[0]\notag\\
\displaystyle-\sum_{i=1}^n\sum_{j=1}^n\int_{G^{-1}(0)}\varphi_i\partial_i\varphi_j\frac{\trace(\partial_j G)}{\abs{\trace(\nabla_H G)}_H}e^{-\trace(U)}d\rho=\displaybreak[0]\notag\\
\displaystyle=\norm{\Phi}^2_{\elle^2(\Omega,\nu)}+\int_\Omega\gen{\nabla_H^2U\Phi,\Phi}_Hd\nu+\int_\Omega\text{trace}_H((\nabla_H \Phi)^2)d\nu+\displaybreak[0]\notag\\
\displaystyle+\sum_{j=1}^n\int_{G^{-1}(0)}(\partial_j\varphi_j-\varphi_j\trace(\partial_j U)-\varphi_j\hat{h}_j)\frac{\gen{\Phi,\trace(\nabla_H G)}_H}{\abs{\trace(\nabla_H G)}}e^{-\trace(U)}d\rho+\displaybreak[0]\notag\\
\displaystyle+\int_{G^{-1}(0)}\frac{\gen{\trace(\nabla_H^2 G)\Phi,\Phi}_H}{\abs{\trace(\nabla_H G)}_H}e^{-\trace(U)}d\rho\leq\displaybreak[0]\notag\\
\displaystyle\leq \norm{\Phi}^2_{L^2(\Omega,\nu;H)}+\int_\Omega\norm{\nabla_H \Phi}_{\mathcal{H}_2}^2d\nu+\int_\Omega\gen{\nabla_H^2U\Phi,\Phi}_Hd\nu+\displaybreak[0]\notag\\
\displaystyle+\int_{G^{-1}(0)}\frac{\gen{\trace(\nabla_H^2 G)\Phi,\Phi}_H}{\abs{\trace(\nabla_H G)}_H}e^{-\trace(U)}d\rho=\norm{\Phi}^2_{Z^{1,2}(\Omega,\nu;H)}.\label{ciao}
\end{gather}
Let $(\Phi^n)_{n\in\N}\subseteq\mathcal{Z}(\Omega,H)$ be a sequence of vector fields which converges to $\Phi$ in $Z_U^{1,2}(\Omega,\nu;H)$. By \eqref{ciao}, $(\diver_{\nu,\Omega} \Phi^n)$ is a Cauchy sequence in $\elle^2(\Omega,\nu)$ and therefore it converges to an element of $\elle^2(\Omega,\nu)$ which we denote by $\diver_{\nu,\Omega} \Phi$. By formula \eqref{conti dentro teo divergenza}, it is easily seen that $\diver_{\nu,\Omega}\Phi$ satisfies (\ref{condizione di divergenza in H}). Finally, by a standard approximation argument
we can conclude that $\diver_{\nu,\Omega}\Phi$ fulfills (\ref{condizione di divergenza in H}) also for every $f\in W^{1,2}(\Omega,\nu)$.
\end{proof}

We say that a subspace $S$ of $W^{1,2}(\Omega,\nu;H)$, endowed with a Banach norm $\norm{\cdot}_S$, is a \emph{Neumann extension subspace}
if any $\Phi\in S$ satisfies $\langle \Phi,\nabla_H G\rangle_H=0$ $\rho$-a.e. on $G^{-1}(0)$ and it admits a continuous linear extension operator, i,e., 
if there exists a linear operator $E_S: S\ra Z_U^{1,2}(X,\nu,H)$ such that for every $\Phi\in S$
\begin{enumerate}
\item $E_S \Phi(x)=\Phi(x)$ and $\nabla_H E_S\Phi(x)=\nabla_H\Phi(x)$ for $\mu$-a.e $x\in \Omega$;

\item there is $K_S>0$, independent of $\Phi$, such that $\norm{E_S\Phi}_{Z_U^{1,2}(X,\mu;H)}\leq K_S\norm{\Phi}_{S}$.
\end{enumerate}
As a corollary of Theorem \ref{divergence for W12} we get the following.

\begin{cor}\label{divext}
Assume that Hypotheses \ref{ipotesi dominio} and \ref{ipotesi peso} hold and let $S$ be a Neumann extension subspace with norm $\norm{\cdot}_S$. Every field $\Phi\in S$ has a divergence $\diver_{\nu,\Omega} \Phi\in \elle^2(\Omega,\nu)$ and for every $f\in W^{1,2}(\Omega,\nu)$, the following equality holds:
\[\int_\Omega\gen{\nabla_H f(x),\Phi(x)}_Hd\nu(x)=-\int_\Omega f(x)\diver_{\nu,\Omega} \Phi(x)d\nu(x).\]
Furthermore, if $\varphi_n=\gen{\Phi,h_n}_H$ for every $n\in\N$, where $(h_n)_{n\in\N}$ is an orthonormal basis of $H$, then
\begin{gather*}
\diver_{\nu,\Omega} \Phi=\sum_{n=1}^{+\infty}\pa{\partial_n\varphi_n-\varphi_n\partial_nU-\varphi_n\hat{h}_n},
\end{gather*}
where the series converges in $\elle^2(\Omega,\nu)$. In addition, $\norm{\diver_\nu \Phi}_{\elle^2(\Omega,\nu)}\leq K_S\norm{\Phi}_{S}$.
\end{cor}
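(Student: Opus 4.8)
The plan is to obtain Corollary~\ref{divext} from Theorem~\ref{divergence for W12} applied on the whole space, pulling the conclusion back to $\Omega$ through the extension operator $E_S$. Fix $\Phi\in S$ and put $\tilde\Phi:=E_S\Phi$. By definition of a Neumann extension subspace $\tilde\Phi\in Z^{1,2}_U(X,\nu;H)$, it coincides with $\Phi$ together with its $H$-gradient $\nu$-a.e.\ on $\Omega$ (recall $\nu=e^{-U}\mu$ with $e^{-U}>0$ $\mu$-a.e.), and $\norm{\tilde\Phi}_{Z^{1,2}_U(X,\nu;H)}\le K_S\norm{\Phi}_S$. Theorem~\ref{divergence for W12} with $\Omega=X$ (which uses only Hypothesis~\ref{ipotesi peso}) then gives $\diver_{\nu,X}\tilde\Phi\in\elle^2(X,\nu)$, the series representation $\diver_{\nu,X}\tilde\Phi=\sum_{n}\pa{\partial_n\psi_n-\psi_n\partial_nU-\psi_n\hat{h}_n}$ in $\elle^2(X,\nu)$ with $\psi_n=\gen{\tilde\Phi,h_n}_H$, and $\norm{\diver_{\nu,X}\tilde\Phi}_{\elle^2(X,\nu)}\le\norm{\tilde\Phi}_{Z^{1,2}_U(X,\nu;H)}\le K_S\norm{\Phi}_S$. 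I then set $g:=\pa{\diver_{\nu,X}\tilde\Phi}_{|\Omega}\in\elle^2(\Omega,\nu)$. Since $\psi_n=\varphi_n$ and $\partial_n\psi_n=\partial_n\varphi_n$ $\nu$-a.e.\ on $\Omega$, restriction of the series yields $g=\sum_{n}\pa{\partial_n\varphi_n-\varphi_n\partial_nU-\varphi_n\hat{h}_n}$ in $\elle^2(\Omega,\nu)$, together with $\norm{g}_{\elle^2(\Omega,\nu)}\le K_S\norm{\Phi}_S$. Thus the asserted formula and the norm bound follow as soon as $g$ is identified with $\diver_{\nu,\Omega}\Phi$.

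For that identification I would approximate: pick a sequence $(\Psi^k)_{k\in\N}$ of finite-sum fields $\Psi^k=\sum_i\varphi_i^k h_i$ with $\varphi_i^k\in\fcon^2_b(X)$ converging to $\tilde\Phi$ in $Z^{1,2}_U(X,\nu;H)$; since restriction is a contraction from $Z^{1,2}_U(X,\nu;H)$ into $W^{1,2}(\Omega,\nu;H)$, one gets $\Psi^k_{|\Omega}\to\Phi$ in $W^{1,2}(\Omega,\nu;H)$, while $\diver_{\nu,X}\Psi^k\to\diver_{\nu,X}\tilde\Phi$ in $\elle^2(X,\nu)$ by the continuity of the divergence established in the proof of Theorem~\ref{divergence for W12}. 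For each $k$ and each $f\in\fcon^\infty_b(\Omega)$, the integration by parts computation carried out in \eqref{conti dentro teo divergenza}, applied to the finite-sum field $\Psi^k_{|\Omega}$ but without discarding the boundary term (since $\Psi^k$ need not satisfy the Neumann condition), gives
\[\int_\Omega\gen{\nabla_H f,\Psi^k_{|\Omega}}_Hd\nu=\int_{G^{-1}(0)}\gen{\Psi^k_{|\Omega},\trace(\nabla_H G)}_H\frac{fe^{-\trace(U)}}{\abs{\trace(\nabla_H G)}_H}d\rho-\int_\Omega f\,\pa{\diver_{\nu,X}\Psi^k}_{|\Omega}d\nu.\]
Letting $k\to+\infty$, the left-hand side tends to $\int_\Omega\gen{\nabla_H f,\Phi}_Hd\nu$ and the last integral to $\int_\Omega fg\,d\nu$ (using that $f$ is bounded), whereas the boundary integral converges to $\int_{G^{-1}(0)}\gen{\trace(\Phi),\trace(\nabla_H G)}_H\frac{fe^{-\trace(U)}}{\abs{\trace(\nabla_H G)}_H}d\rho$, which vanishes because $\Phi\in S$ satisfies the Neumann condition $\gen{\trace(\Phi),\trace(\nabla_H G)}_H=0$ $\rho$-a.e.\ on $G^{-1}(0)$. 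Hence $\int_\Omega\gen{\nabla_H f,\Phi}_Hd\nu=-\int_\Omega fg\,d\nu$ for all $f\in\fcon^\infty_b(\Omega)$, so $\Phi\in D(\diver_{\nu,\Omega})$ and $\diver_{\nu,\Omega}\Phi=g$; since both sides are continuous in $f$ with respect to the $W^{1,2}(\Omega,\nu)$-norm (as $\Phi\in\elle^2(\Omega,\nu;H)$ and $g\in\elle^2(\Omega,\nu)$) and $\fcon^\infty_b(\Omega)$ is dense in $W^{1,2}(\Omega,\nu)$, the identity extends to every $f\in W^{1,2}(\Omega,\nu)$.

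The one point that requires care is the passage to the limit in the boundary integral. From $\Psi^k_{|\Omega}\to\Phi$ in $W^{1,2}(\Omega,\nu;H)$ and the continuity of the trace operator (Proposition~\ref{trace continuity}, in its $H$-valued form) one gets $\trace(\Psi^k_{|\Omega})\to\trace(\Phi)$ in $\elle^q(G^{-1}(0),e^{-U}\rho;H)$ for the admissible exponents $q$; writing the boundary integrand as $f\,e^{-\trace(U)}\gen{\trace(\Psi^k_{|\Omega}),\,\trace(\nabla_H G)/\abs{\trace(\nabla_H G)}_H}_H$, with $f$ bounded, the vector field $\trace(\nabla_H G)/\abs{\trace(\nabla_H G)}_H$ of unit $H$-norm, and $e^{-\trace(U)}$ integrable against $\rho$ on $G^{-1}(0)$ (a consequence of Hypothesis~\ref{ipotesi dominio} together with $e^{-U}\in W^{1,r}(X,\mu)$), a H\"older estimate yields the desired convergence. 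This is the only place where Hypothesis~\ref{ipotesi dominio} and the trace theory of \cite{CL14}, \cite{Fer15} genuinely enter; the remainder is a transcription of Theorem~\ref{divergence for W12}.
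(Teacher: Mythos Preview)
Your proof is correct and follows essentially the same strategy as the paper: extend $\Phi$ to $\tilde\Phi\in Z^{1,2}_U(X,\nu;H)$, apply Theorem~\ref{divergence for W12} on the whole space, restrict to $\Omega$, and then verify that the restriction really is $\diver_{\nu,\Omega}\Phi$ via an approximation argument in which the boundary term is shown to vanish thanks to the Neumann condition. The only difference is the approximation scheme used for this last step: the paper truncates the orthonormal-basis expansion of $\Phi$ itself (i.e.\ takes partial sums $D_k=\sum_{n=1}^k(\partial_n\varphi_n-\varphi_n\partial_nU-\varphi_n\hat h_n)$) and handles the boundary by dominated convergence with dominant $|f|e^{-U}|\Phi|_H\in\elle^1(G^{-1}(0),\rho)$, whereas you approximate $\tilde\Phi$ by smooth cylindrical fields $\Psi^k$ in $Z^{1,2}_U(X,\nu;H)$ and control the boundary term via the continuity of the trace operator (Proposition~\ref{trace continuity}). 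Both variants rely on the same trace theory and yield the same conclusion.
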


\begin{proof}
Let us consider the divergence $\diver_{\nu,X}E_S\Phi$ (Theorem \ref{divergence for W12}). For $\nu$-a.e. every $x\in\Omega$ let
\[D_k(x):=\sum_{n=1}^{k}\pa{\partial_n \varphi_n(x)-\varphi_n(x)\partial_nU(x)-\varphi_n(x)\hat{h}_n(x)}.\]
We have that
\begin{gather}\label{stanco}
\int_\Omega\abs{D_k-D_m}^2d\nu\leq \int_X\abs{\sum_{n=k+1}^m\partial_nE_S\varphi_n-E_S\varphi_n\partial_nU-E_S\varphi_n\hat{h}_n}^2d\nu,
\end{gather}
where $E_S\varphi_n:=\langle E_S\Phi,h_n\rangle_H$.
Since the right hand side of \eqref{stanco} converges to zero (the series converges to $\diver_{\nu,X}E_S\Phi$) we get that $(D_k)_{k\in\N}$ is a Cauchy sequence in $\elle^2(\Omega,\nu)$. We denote by $D_\infty\Phi$ the limit of $D_n$ in $L^2(\Omega,\nu)$ and we observe that for every $f\in W^{1,2}(\Omega,\nu)$
\begin{gather*}
\int_\Omega\gen{\nabla_Hf,\Phi}_Hd\nu=\lim_{n\ra+\infty}\sum_{i=1}^n\int_\Omega\partial_if\varphi_id\nu=\\
=\lim_{n\ra+\infty}\sum_{i=1}^n\pa{\int_\Omega f\pa{\varphi_i(\partial_iU+\hat{h}_i)-\partial_i\varphi_i}d\nu+\int_{G^{-1}(0)}f\varphi_i\frac{\partial_i G}{\abs{\nabla_H G}_H}e^{-U}d\rho}.
\end{gather*}
We remark that $\rho$-a.e we have
\[\sum_{i=1}^nf\varphi_i\frac{\partial_i G}{\abs{\nabla_H G}_H}e^{-U}\longra f\gen{\Phi,\nabla_H G}_H\frac{e^{-U}}{\abs{\nabla_H G}_H}=0,\]
and
\[\abs{\sum_{i=1}^nf\varphi_i\frac{\partial_i G}{\abs{\nabla_H G}_H}e^{-U}}\leq \abs{f}e^{-U}\abs{\Phi}_H\in\elle^1(G^{-1}(0),\rho).\]
Therefore, by the Lebesgue's dominated convergence theorem and the continuity of the trace operator (Proposition \ref{trace continuity}) we get $\int_\Omega\gen{\nabla_Hf,\Phi}_Hd\nu=-\int_\Omega fD_\infty\Phi d\nu$ for any $f\in W^{1,2}(\Omega,\nu)$.
This means that $\diver_{\nu,\Omega}\Phi$ exists and $\diver_{\nu,\Omega}\Phi=D_\infty\Phi$. Moreover
\begin{gather*}
\norm{\diver_{\nu,\Omega}\Phi}_{\elle^2(\Omega,\nu)}\leq\liminf_{k\ra+\infty}\norm{D_k}_{\elle^2(\Omega,\nu)}\leq \liminf_{k\ra+\infty}\norm{D_k}_{\elle^2(X,\nu)}=\\ 
=\norm{\diver_{\nu,X}E_S\Phi}_{\elle^2(X,\nu)}\leq \norm{E_S\Phi}_{Z_U^{1,2}(X,\nu;H)}\leq K_S\norm{\Phi}_{S}.
\end{gather*}
\end{proof}

\begin{remark}
The subspace of the vector fields $\Phi\in Z_U^{1,2}(\Omega,\nu;H)$ such that the extension
\[\tilde{\Phi}(x):=\eqsys{\Phi(x) & x\in\Omega;\\ 0 & x\notin\Omega,}\]
belongs to $Z_U^{1,2}(X,\nu;H)$ satisfies the hypotheses of Corollary \ref{divext}.
\end{remark}

\section{Maximal Sobolev regularity}\label{Maximal Sobolev regularity}

This Section is devoted to the the study of maximal Sobolev regularity for the equation
\begin{gather}\label{Problema}
\lambda u(x)-L_{\nu,\Omega}u(x)=f(x)\qquad \mu\text{-a.e. }x\in \Omega,
\end{gather}
where $\lambda >0$, and $f\in\elle^2(\Omega,\nu)$, since a part of the proofs of Theorems \ref{cor whole space}, \ref{cor halfspaces} and \ref{thm extension}
relies on them. The results of this section are sharper than the results contained in \cite{CF16} and \cite{CF16convex}.

Our main result is the following theorem.

\begin{thm}\label{Main theorem 1}
Assume that Hypotheses \ref{ipotesi dominio} and \ref{ipotesi peso} hold. For every $\lambda>0$ and $f\in\elle^2(\Omega,\nu)$ problem (\ref{Problema}) has a unique weak solution $u\in W^{2,2}_U(\Omega,\nu)$. In addition the following hold
\begin{gather}
\label{Neumann condition}\gen{\nabla_H u(x),\nabla_H G(x)}_H=0\text{ for }\rho\text{-a.e. }x\in G^{-1}(0);\\
\label{1 stime max}
\norm{u}_{\elle^2(\Omega,\nu)}\leq\frac{1}{\lambda}\norm{f}_{\elle^2(\Omega,\nu)};\qquad \norm{\nabla_H u}_{\elle^2(\Omega,\nu;H)}\leq\frac{1}{\sqrt{\lambda}}\norm{f}_{\elle^2(\Omega,\nu)};\\
\label{2 stime max}
\norm{\nabla_H^2 u}_{\elle^2(\Omega,\nu;\mathcal{H}_2)}^2+\int_\Omega\gen{\nabla_H^2 U\nabla_H u,\nabla_H u}d\nu\leq 2\norm{f}^2_{\elle^2(\Omega,\nu)}.
\end{gather}
In particular $u\in W^{2,2}_{U,N}(\Omega,\nu)$.
\end{thm}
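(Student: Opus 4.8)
The plan is to obtain the weak solution and the first-order bounds \eqref{1 stime max} from Lax--Milgram, to prove the second-order estimate \eqref{2 stime max} by an approximation argument whose core a priori bound is precisely the divergence identity \eqref{ciao} of Theorem \ref{divergence for W12} read with $\Phi=\nabla_H u$, and to deduce the Neumann condition \eqref{Neumann condition} from Theorem \ref{divergence theorem with traces}.

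\emph{Step 1.} The bilinear form $a(u,\varphi)=\lambda\int_\Omega u\varphi\,d\nu+\int_\Omega\langle\nabla_H u,\nabla_H\varphi\rangle_H\,d\nu$ is bounded and coercive on $W^{1,2}(\Omega,\nu)$; Lax--Milgram gives a unique weak solution $u=R(\lambda,L_{\nu,\Omega})f$, and testing with $\varphi=u$, using $\langle\nabla_H u,\nabla_H u\rangle_H\geq0$, yields \eqref{1 stime max}.

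\emph{Step 2 (approximation and the key estimate).} I would regularize the data: replace $U$ by its Moreau--Yosida approximations $U_\alpha$ along $H$ (Proposition \ref{proprieta MY}), which are convex with $H$-Lipschitz gradient so that $\fcon^2_b$ is dense, and handle the constraint $\Omega=G^{-1}(-\infty,0]$ either by a finite-dimensional reduction onto cylindrical coordinates (where the corresponding sublevel set is a smooth convex domain and classical maximal regularity applies, cf.\ \cite{DPL04}, \cite{LL06}, \cite{BF04}) or, equivalently, by a convex penalization $\varepsilon^{-1}\Theta(G)$ with $\Theta$ convex, nondecreasing, flat on $(-\infty,0]$. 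For the regularized solution $u_n\in\fcon^2_b$ one has $\lambda u_n-L_{\nu_n,\Omega_n}u_n=f$ with $L_{\nu_n,\Omega_n}u_n=\diver_{\nu_n,\Omega_n}(\nabla_H u_n)$; squaring and integrating,
\[\|f\|_{\elle^2}^2=\lambda^2\|u_n\|_{\elle^2}^2+2\lambda\|\nabla_H u_n\|_{\elle^2}^2+\big\|\diver_{\nu_n,\Omega_n}\nabla_H u_n\big\|_{\elle^2}^2,\]
and the last term is expanded exactly as in \eqref{ciao} with $\Phi=\nabla_H u_n$: since $\nabla_H^2 u_n$ is symmetric, $\operatorname{trace}_H((\nabla_H^2 u_n)^2)=\|\nabla_H^2 u_n\|_{\mathcal{H}_2}^2$; the boundary integral carrying $\langle\nabla_H u_n,\trace(\nabla_H G)\rangle_H$ vanishes because $u_n$ satisfies the Neumann condition; and the residual boundary integral equals, by Lemma \ref{Lemma derivata seconda G}, $\int_{G^{-1}(0)}\langle\trace(\nabla_H^2G)\nabla_H u_n,\nabla_H u_n\rangle_H\,|\trace(\nabla_H G)|_H^{-1}e^{-\trace(U)}\,d\rho\geq0$ by convexity of $G$ (in the penalization variant the analogous term $\varepsilon^{-1}\int\langle\nabla_H^2\Theta(G)\nabla_H u_n,\nabla_H u_n\rangle_H\,d\nu_n$ is nonnegative, since $\Theta\circ G$ is convex). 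Discarding the nonnegative terms $\lambda^2\|u_n\|^2$, $2\lambda\|\nabla_H u_n\|^2$ and the boundary (resp.\ penalization) term leaves
\[\|\nabla_H^2 u_n\|_{\elle^2(\Omega_n,\nu_n;\mathcal{H}_2)}^2+\int_{\Omega_n}\langle\nabla_H^2 U_\alpha\,\nabla_H u_n,\nabla_H u_n\rangle_H\,d\nu_n\leq\|f\|_{\elle^2}^2,\]
a bound uniform in the approximation parameters.

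\emph{Step 3 (passage to the limit and the Neumann condition).} The uniform second-order bound gives weak $\elle^2$-compactness of $(\nabla_H^2 u_n)$; since $u_n\to u$ in $\elle^2$ and $(\nabla_H,\nabla_H^2)$ is a closed operator, the weak limit is $\nabla_H^2 u$, whence $u\in W^{2,2}(\Omega,\nu)$, and by weak lower semicontinuity together with $\nabla_H^2 U_\alpha\to\nabla_H^2 U$ (Proposition \ref{second order MY}) one gets $\int_\Omega\langle\nabla_H^2 U\,\nabla_H u,\nabla_H u\rangle_H\,d\nu<+\infty$ and \eqref{2 stime max} (with room to spare for the displayed constant); in particular $u\in W^{2,2}_U(\Omega,\nu)$, and the limit procedure also identifies the abstract $L_{\nu,\Omega}u$ with the $\elle^2$-convergent series $\sum_k(\partial_{kk}u-\partial_k u\,\partial_k U-\partial_k u\,\hat e_k)=\lambda u-f$. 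Knowing this, \eqref{Neumann condition} follows by applying Theorem \ref{divergence theorem with traces} to $\varphi\,\partial_k u\in W^{1,2}(\Omega,\nu)$ for $\varphi\in\fcon^\infty_b(\Omega)$ and summing over $k$: the interior side becomes $\int_\Omega\langle\nabla_H\varphi,\nabla_H u\rangle_H\,d\nu+\int_\Omega\varphi\,L_{\nu,\Omega}u\,d\nu$, which is $0$ by the weak formulation, so $\int_{G^{-1}(0)}\trace(\varphi)\,\langle\trace(\nabla_H u),\trace(\nabla_H G)\rangle_H\,|\trace(\nabla_H G)|_H^{-1}e^{-\trace(U)}\,d\rho=0$ for every such $\varphi$; since $|\trace(\nabla_H G)|_H\neq0$ $\rho$-a.e.\ and the traces of $\fcon^\infty_b(\Omega)$ are dense in $\elle^1(G^{-1}(0),e^{-\trace(U)}\rho)$, this forces $\langle\trace(\nabla_H u),\trace(\nabla_H G)\rangle_H=0$ $\rho$-a.e.\ on $G^{-1}(0)$, i.e.\ $u\in W^{2,2}_{U,N}(\Omega,\nu)$. (Alternatively one passes the Neumann condition of the $u_n$ to the limit via the weak continuity of the trace operator, Proposition \ref{trace continuity}.)

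\emph{Main obstacle.} The delicate part is Step 2: constructing approximants that are simultaneously regular enough to legitimize the Bochner-type identity of Lemma \ref{Conti per divergenza con Neumann} with its boundary term, compatible with the trace theory of Section \ref{Traces of Sobolev functions} (whose $\elle^2$-continuity on $G^{-1}(0)$ is \emph{not} available), and convergent in topologies strong enough to identify the limit with $u$ while keeping, uniformly, the favorable sign of the $\nabla_H^2 G$ (or penalization) term. This is exactly where the full strength of Hypothesis \ref{ipotesi dominio} — precise versions of $G$, $\nabla_H G$, $\nabla_H^2 G$, nondegeneracy of $\nabla_H G$ on $G^{-1}(0)$, and Lemma \ref{Lemma derivata seconda G} — together with the convexity of both $U$ and $G$, is used.
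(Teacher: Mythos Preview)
Your overall strategy matches the paper's: approximate by a family of problems with $H$-Lipschitz weight, derive a uniform Bochner-type a priori bound, and pass to the limit by weak compactness and Fatou. The paper, however, carries this out in a slightly different way than you sketch.

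First, for \eqref{Neumann condition} and \eqref{1 stime max} the paper simply cites \cite[Theorems 1.3 and 1.4]{CF16convex}; your direct derivation via Lax--Milgram and Theorem~\ref{divergence theorem with traces} is a legitimate self-contained alternative.

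Second, for the key second-order bound the paper does \emph{not} work on $\Omega$ with boundary terms and Lemma~\ref{Lemma derivata seconda G}, as your primary outline suggests. Instead it uses precisely your ``penalization variant'': it replaces $U$ by $V_\alpha=U_\alpha+\tfrac{1}{2\alpha}d_H^2(\cdot,\Omega)$ (the $H$-distance to $\Omega$, which plays the role of your $\varepsilon^{-1}\Theta(G)$), solves the penalized problem on the \emph{whole} space $X$ via Theorem~\ref{Stime per lip} (strong solution sequences in $\fcon^3_b(X)$), and there is no boundary at that stage. The convexity of $d_H^2(\cdot,\Omega)$ ensures $\nabla_H^2 V_\alpha\geq \nabla_H^2 U_\alpha$, so the unwanted term can be dropped with the correct sign. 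The Bochner identity itself is obtained in Theorem~\ref{Stime per lip} by differentiating the equation in direction $e_j$, multiplying by $\partial_j u$, summing and integrating --- not by squaring $L_{\nu}u$ and invoking \eqref{ciao} as you do. Your route via \eqref{ciao} with $\Phi=\nabla_H u_n$ is equivalent and in fact yields the constant $1$ rather than $2$ in \eqref{2 stime max}, as you notice.

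In short: your penalization variant is exactly what the paper does (with $d_H^2$ as the specific penalty), while your finite-dimensional/boundary variant is a parallel route that the paper does not take for this theorem. Both are sound; the paper's choice has the advantage that the approximating problems live on $X$, so no boundary trace issues arise before the final passage to the limit.
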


We split the proof of Theorem \ref{Main theorem 1} into two parts: in the Section \ref{cacca1} we study the case of $\Omega=X$ and $U$ with $H$-Lipschitz gradient, in Section \ref{cacca2} we use the results of Section \ref{cacca1} to prove Theorem \ref{Main theorem 1}.

\subsection{$\Omega$ is the whole space}\label{cacca1}

We start this subsection assuming the following hypothesis on the weight:

\begin{hyp}\label{ipotesi peso 2}
Let $U:X\ra \R$ be a function satisfying Hypothesis \ref{ipotesi peso}. Assume that $U$ is differentiable along $H$ at every point $x\in X$, and $\nabla_H U$ is $H$-Lipschitz.
\end{hyp}
\noindent We remark that every convex function in $\fcon^2_b(X)$ and every continuous linear functional $x^*\in X^*$ satisfy Hypothesis \ref{ipotesi peso 2}.

We will recall some results about maximal Sobolev regularity contained in \cite{CF16}. Let us consider the problem
\begin{gather}\label{Problema su spazio}
\lambda u(x)-L_{\nu}u(x)=f(x)\qquad \mu\text{-a.e. }x\in X,
\end{gather}
where $\lambda >0$, $f\in\elle^2(X,\nu)$, and $L_\nu:=L_{\nu,X}$. A function $u\in D(L_\nu)$ of problem \eqref{Problema su spazio} is said to be a \emph{strong solution} if there exists a sequence $\{u_n\}_{n\in\N}\subseteq \fcon^3_b(X)$ such that $u_n$ converges to $u$ in $\elle^2(X,\nu)$ and
\[\elle^2(X,\nu)\text{-}\lim_{n\ra+\infty}\lambda u_n-L_\nu u_n=f.\]
Moreover a sequence $\set{u_n}_{n\in\N}\subseteq\fcon^3_b(X)$ satisfying the above conditions is called a \emph{strong solution sequence for $u$}. The following proposition is borrowed from \cite[Proposition 5.8]{CF16}.

\begin{thm}\label{existence of strong solution}
Assume that Hypothesis \ref{ipotesi peso 2} holds. For every $\lambda >0$ and $f\in\elle^2(X,\nu)$, there exists a unique strong solution of equation (\ref{Problema su spazio}). Such strong solution is also a weak solution of problem \eqref{Problema su spazio}. In addition, if $\set{u_n}_{n\in\N}\subseteq\fcon^3_b(X)$ is a strong solution sequence for $u$, then $(u_n)$ converges to $u$ in $W^{2,2}(X,\nu)$.
\end{thm}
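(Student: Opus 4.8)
The statement splits into an essentially automatic part --- (i) existence and uniqueness of the weak solution, (ii) the fact that a strong solution is a weak solution, hence unique, and (iii) the $W^{2,2}(X,\nu)$-convergence of strong solution sequences --- and a genuinely harder part, the existence of a strong solution sequence for the weak solution. For the automatic part I would start by noting that the form $a_\lambda(v,w):=\lambda\int_X vw\,d\nu+\int_X\gen{\nabla_H v,\nabla_H w}_H\,d\nu$ is continuous and coercive on $W^{1,2}(X,\nu)$, so there is a unique $u\in W^{1,2}(X,\nu)\cap D(L_\nu)$ with $\lambda u-L_\nu u=f$, namely $u=R(\lambda,L_\nu)f$; testing against $u$ gives $\norm{u}_{\elle^2(X,\nu)}\le\lambda^{-1}\norm{f}_{\elle^2(X,\nu)}$ and $\norm{\nabla_H u}_{\elle^2(X,\nu;H)}\le\lambda^{-1/2}\norm{f}_{\elle^2(X,\nu)}$. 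Moreover, expanding $\norm{\lambda v-L_\nu v}_{\elle^2(X,\nu)}^2$ for $v\in D(L_\nu)$ and using $\gen{L_\nu v,v}_{\elle^2(X,\nu)}=-\norm{\nabla_H v}_{\elle^2(X,\nu;H)}^2$ gives the elementary bound $\norm{L_\nu v}_{\elle^2(X,\nu)}\le\norm{\lambda v-L_\nu v}_{\elle^2(X,\nu)}$.

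The key a priori estimate is the Bochner-type identity
\[\int_X|L_\nu v|^2\,d\nu=\int_X\norm{\nabla_H^2 v}_{\mathcal{H}_2}^2\,d\nu+\int_X\gen{\nabla_H^2 U\,\nabla_H v,\nabla_H v}_H\,d\nu+\int_X\abs{\nabla_H v}_H^2\,d\nu,\qquad v\in\fcon^3_b(X),\]
obtained by writing $L_\nu v=\sum_i\pa{\partial_i(\partial_i v)-(\partial_i U+\hat{e}_i)\partial_i v}$ via \eqref{formula Lnu} and applying the integration-by-parts identity \eqref{conti 2} of Lemma \ref{Conti per divergenza con Neumann} with $f=\partial_i v$, $g=\partial_j v$, $h=e_i$, $k=e_j$, then summing over $i,j$. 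Since $U$ is convex, $\gen{\nabla_H^2 U\,h,h}_H\ge0$ $\mu$-a.e., so every term on the right is bounded by $\int_X|L_\nu v|^2\,d\nu$; combined with the first paragraph this yields a constant $C=C(\lambda)$ with $\norm{v}_{W^{2,2}(X,\nu)}\le C\,\norm{\lambda v-L_\nu v}_{\elle^2(X,\nu)}$ for every $v\in\fcon^3_b(X)$.

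This estimate gives (ii) and (iii) at once. If $u$ is a strong solution with strong solution sequence $(u_n)\subseteq\fcon^3_b(X)$ and $g_n:=\lambda u_n-L_\nu u_n\to f$ in $\elle^2(X,\nu)$, then applying the estimate to $u_n-u_m\in\fcon^3_b(X)$ shows $(u_n)$ is Cauchy in $W^{2,2}(X,\nu)$; its limit there coincides with the $\elle^2$-limit $u$, so $u\in W^{2,2}(X,\nu)$ and $u_n\to u$ in $W^{2,2}(X,\nu)$. Passing to the limit in $\int_X\pa{\lambda u_n\varphi+\gen{\nabla_H u_n,\nabla_H\varphi}_H}\,d\nu=\int_X g_n\varphi\,d\nu$ for $\varphi\in\fcon^\infty_b(X)$, and then using density of $\fcon^\infty_b(X)$ in $W^{1,2}(X,\nu)$, shows that $u$ is the weak solution; hence the strong solution is unique and every strong solution sequence converges to it in $W^{2,2}(X,\nu)$.

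What remains --- and this is the main obstacle --- is to produce a strong solution sequence in $\fcon^3_b(X)$ for the weak solution $u=R(\lambda,L_\nu)f$. I would do this by a double regularization: first replace $U$ by its Moreau--Yosida approximations $U_\alpha$ along $H$, which by Proposition \ref{proprieta MY} are convex, differentiable along $H$ with $\nabla_H U_\alpha$ $H$-Lipschitz of constant $\le2[\nabla_H U]_{H\text{-Lip}}$, and satisfy $\nabla_H U_\alpha\to\nabla_H U$ (Proposition \ref{proprieta MY}\eqref{convergenza gradiente MY}) and $\nabla_H^2 U_\alpha\to\nabla_H^2 U$ (Proposition \ref{second order MY}) pointwise; then compose with finite-dimensional projections and a cut-off to obtain cylindrical convex $U_k\in\fcon^3_b(X)$ with the same convergences in $\elle^p(X,\mu)$, uniformly bounded $H$-Lipschitz constants, and $\nu_k:=e^{-U_k}\mu\to\nu$. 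Approximating $f$ by cylindrical $f_k\in\fcon^2_b(X)$, the equation $\lambda u_k-L_{\nu_k}u_k=f_k$ reduces on the relevant finite-dimensional subspace to a uniformly elliptic equation with smooth coefficients and Gaussian-type weight, whose solution $u_k$ is smooth and bounded with bounded derivatives up to order three (classical interior regularity plus the identity of the second paragraph applied to $U_k$), so $u_k\in\fcon^3_b(X)$ with bounds uniform in $k$. Passing to the limit --- using Mosco-type convergence of the forms associated with $\nu_k$, the convergence $\nu_k\to\nu$, and $\nabla_H U_k\to\nabla_H U$ --- one identifies a subsequential $W^{2,2}$-limit of $(u_k)$ with $u$ and checks that $\lambda u_k-L_\nu u_k=f_k+\gen{\nabla_H(U_k-U),\nabla_H u_k}_H\to f$ in $\elle^2(X,\nu)$, so a suitable diagonal subsequence of $(u_k)$ is a strong solution sequence for $u$. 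The delicate point is exactly this last passage: the measures $\nu_k$, the spaces $W^{1,2}(X,\nu_k)$ and the drift $\nabla_H U_k$ all vary with $k$, so care is needed in taking limits in the quadratic-form formulation, and above all in showing $\gen{\nabla_H(U_k-U),\nabla_H u_k}_H\to0$ in $\elle^2(X,\nu)$ --- which is where the $H$-Lipschitz hypothesis on $\nabla_H U$ (forcing $\nabla_H U\in\elle^p(X,\mu;H)$ for all $p$, with uniform control of $\nabla_H U_k$) and the uniform $W^{2,2}$-bounds from the convexity-based Bochner identity come into play.
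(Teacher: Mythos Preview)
The paper does not prove this theorem at all: it is quoted verbatim from \cite[Proposition~5.8]{CF16}, so there is no in-paper argument to compare against. Your outline of the ``automatic'' part (Lax--Milgram for the weak solution, the Bochner-type identity via Lemma~\ref{Conti per divergenza con Neumann}, and the Cauchy argument giving $W^{2,2}$-convergence of strong solution sequences) is correct and is exactly the mechanism used in \cite{CF16} and reused in the proof of Theorem~\ref{Stime per lip} here.

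For the existence of a strong solution sequence, your strategy --- reduce to a cylindrical weight, solve a finite-dimensional problem, and pass to the limit --- is the right one, but two steps as written do not go through. First, ``cylindrical convex $U_k\in\fcon^3_b(X)$'' is self-contradictory: a non-constant convex function of finitely many linear functionals is unbounded on $X$, so it cannot lie in $\fcon^3_b(X)$; any cut-off destroys convexity, and with it the sign in the Bochner identity that your uniform $W^{2,2}$-bound relies on. What one actually does (and this is what \cite{CF16} does) is take $U_k$ cylindrical and convex but \emph{not} bounded --- e.g.\ conditional expectations $\mathbb{E}_k U$ onto the $\sigma$-algebra generated by $\hat e_1,\dots,\hat e_k$, which preserve convexity --- and keep the weight unbounded. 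Second, even with a smooth cylindrical convex $U_k$ and $f_k\in\fcon^2_b(X)$, the assertion that the finite-dimensional solution $u_k$ lies in $\fcon^3_b(X)$ (bounded together with three derivatives) is not a consequence of ``classical interior regularity'': the drift $\xi+\nabla U_k(\xi)$ is unbounded, so Schauder theory gives local smoothness but no global bounds. One needs the explicit gradient estimates for the perturbed Ornstein--Uhlenbeck semigroup (pointwise bounds $|\nabla T_t f|\le c(t)T_t|\nabla f|$ etc.), which is where the work in \cite{CF16} actually lies. Also note that under Hypothesis~\ref{ipotesi peso 2} the Moreau--Yosida step on $U$ is redundant: $\nabla_H U$ is already $H$-Lipschitz, so the only approximation needed is the cylindrical one.
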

When $U$ satisfies Hypothesis \ref{ipotesi peso 2} we have the following regularity result.

\begin{thm}\label{Stime per lip}
Let $U$ be a function satisfying Hypothesis \ref{ipotesi peso 2}, let $\lambda>0$, $f\in\elle^2(X,\nu)$, and let $u$ be the strong solution of equation (\ref{Problema su spazio}).
Then $u\in W^{2,2}_U(X,\nu)$ and
\begin{gather}
\label{1 stime max per lip}\norm{u}_{\elle^2(X,\nu)}\leq\frac{1}{\lambda}\norm{f}_{\elle^2(X,\nu)};\qquad \norm{\nabla_H u}_{\elle^2(X,\nu;H)}\leq\frac{1}{\sqrt{\lambda}}\norm{f}_{\elle^2(X,\nu)};\\
\label{2 stime max per lip}\norm{\nabla_H^2 u}_{\elle^2(X,\nu;\mathcal{H}_2)}^2+\int_X\gen{\nabla_H^2 U\nabla_H u,\nabla_H u}_Hd\nu\leq 2\norm{f}^2_{\elle^2(X,\nu)}.
\end{gather}
\end{thm}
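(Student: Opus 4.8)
The plan is to establish the three estimates first for the members $u_n$ of a strong solution sequence and then let $n\to\infty$. By Theorem~\ref{existence of strong solution} there is a sequence $(u_n)_{n\in\N}\subseteq\fcon^3_b(X)$ with $u_n\to u$ in $\elle^2(X,\nu)$, $f_n:=\lambda u_n-L_\nu u_n\to f$ in $\elle^2(X,\nu)$, and moreover $u_n\to u$ in $W^{2,2}(X,\nu)$. By the integration by parts formula of Section~\ref{sobolev_spaces} applied to \eqref{formula Lnu} (equivalently, since $L_\nu$ is the operator associated with the Dirichlet form $(\psi,\varphi)\mapsto\int_X\gen{\nabla_H\psi,\nabla_H\varphi}_H\,d\nu$), we have $-\int_Xu_nL_\nu u_n\,d\nu=\int_X\abs{\nabla_Hu_n}_H^2\,d\nu$. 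Testing $\lambda u_n-L_\nu u_n=f_n$ against $u_n$ therefore gives
\[\lambda\norm{u_n}_{\elle^2(X,\nu)}^2+\norm{\nabla_Hu_n}_{\elle^2(X,\nu;H)}^2=\int_Xu_nf_n\,d\nu\le\norm{u_n}_{\elle^2(X,\nu)}\norm{f_n}_{\elle^2(X,\nu)},\]
whence $\norm{u_n}_{\elle^2}\le\lambda^{-1}\norm{f_n}_{\elle^2}$ and $\norm{\nabla_Hu_n}_{\elle^2}\le\lambda^{-1/2}\norm{f_n}_{\elle^2}$; passing to the limit yields \eqref{1 stime max per lip}. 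The same identity shows $\gen{u_n,f_n}_{\elle^2}\ge 0$ and $\lambda\norm{u_n}_{\elle^2}\le\norm{f_n}_{\elle^2}$, so that
\[\norm{L_\nu u_n}_{\elle^2(X,\nu)}^2=\norm{\lambda u_n-f_n}_{\elle^2(X,\nu)}^2\le\lambda^2\norm{u_n}_{\elle^2(X,\nu)}^2+\norm{f_n}_{\elle^2(X,\nu)}^2\le 2\norm{f_n}_{\elle^2(X,\nu)}^2.\]

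The heart of the argument is a \emph{Bochner-type identity} for $u_n$. As $u_n$ is cylindrical, $\partial_iu_n=0$ for all but finitely many $i$, so by \eqref{formula Lnu} the sum $L_\nu u_n=\sum_i\bigl(\partial_i(\partial_iu_n)-(\partial_iu_n)\partial_iU-(\partial_iu_n)\hat e_i\bigr)$ is finite. Squaring, integrating, and applying \eqref{conti 2} of Lemma~\ref{Conti per divergenza con Neumann} to each pair of terms — with $f=\partial_iu_n$, $h=e_i$, $g=\partial_ju_n$, $k=e_j$, and using $\partial_{ij}u_n=\partial_{ji}u_n$ for functions in $\fcon^2_b(X)$ — one obtains
\[\int_X(L_\nu u_n)^2\,d\nu=\norm{\nabla_H^2u_n}_{\elle^2(X,\nu;\mathcal{H}_2)}^2+\norm{\nabla_Hu_n}_{\elle^2(X,\nu;H)}^2+\int_X\gen{\nabla_H^2U\,\nabla_Hu_n,\nabla_Hu_n}_H\,d\nu.\]
Since $U$ is convex, $\gen{\nabla_H^2U(x)v,v}_H\ge 0$ for $\mu$-a.e.\ $x\in X$ and all $v\in H$ (see \cite[Theorem 3.1(2)]{FU00}), so dropping the last term and using the bound on $\norm{L_\nu u_n}_{\elle^2}$ from the previous paragraph gives
\[\norm{\nabla_H^2u_n}_{\elle^2(X,\nu;\mathcal{H}_2)}^2+\int_X\gen{\nabla_H^2U\,\nabla_Hu_n,\nabla_Hu_n}_H\,d\nu\le\int_X(L_\nu u_n)^2\,d\nu\le 2\norm{f_n}_{\elle^2(X,\nu)}^2.\]

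It remains to let $n\to\infty$. Since $u_n\to u$ in $W^{2,2}(X,\nu)$, we have $\nabla_Hu_n\to\nabla_Hu$ in $\elle^2(X,\nu;H)$ and $\nabla_H^2u_n\to\nabla_H^2u$ in $\elle^2(X,\nu;\mathcal{H}_2)$; passing to a subsequence we may assume both convergences hold $\mu$-a.e. Because $\nabla_HU$ is $H$-Lipschitz, $\nabla_H^2U(x)$ is a bounded operator on $H$ for $\mu$-a.e.\ $x$, so $\gen{\nabla_H^2U\,\nabla_Hu_n,\nabla_Hu_n}_H\to\gen{\nabla_H^2U\,\nabla_Hu,\nabla_Hu}_H$ $\mu$-a.e.\ along this subsequence; the integrand being nonnegative, Fatou's lemma gives $\int_X\gen{\nabla_H^2U\,\nabla_Hu,\nabla_Hu}_H\,d\nu\le\liminf_n\int_X\gen{\nabla_H^2U\,\nabla_Hu_n,\nabla_Hu_n}_H\,d\nu$. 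Adding $\norm{\nabla_H^2u}_{\elle^2}^2=\lim_n\norm{\nabla_H^2u_n}_{\elle^2}^2$ and using the last display together with $\norm{f_n}_{\elle^2}\to\norm{f}_{\elle^2}$ yields \eqref{2 stime max per lip}; in particular the Hessian term of $U$ is finite, so $u\in W^{2,2}_U(X,\nu)$.

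The step I expect to be the main obstacle is the Bochner-type identity: one must check that the double sum produced by expanding $\int_X(L_\nu u_n)^2\,d\nu$ may be rearranged and that Lemma~\ref{Conti per divergenza con Neumann} applies termwise, which is exactly why the approximation is taken in $\fcon^3_b(X)$ (so that $\partial_iu_n\in\fcon^2_b(X)$ and the sums are finite) and why Theorem~\ref{existence of strong solution} is invoked to secure $W^{2,2}$-convergence of the approximants. The only other delicate point is that the limit passage for the Hessian term of $U$ produces merely an inequality through Fatou, which is however precisely what is claimed; the $H$-Lipschitz hypothesis on $\nabla_HU$ supplies the pointwise convergence that makes Fatou applicable.
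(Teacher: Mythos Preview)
Your proof is correct and follows essentially the same strategy as the paper: work with a strong solution sequence $(u_n)\subseteq\fcon^3_b(X)$, establish a second-order identity at the level of $u_n$, and pass to the limit via Fatou. The only notable difference is in how the key identity is obtained: the paper differentiates the equation $\lambda u_n-L_\nu u_n=f_n$ in the $e_j$-direction, multiplies by $\partial_j u_n$, sums and integrates to reach
\[(1+\lambda)\|\nabla_H u_n\|_{\elle^2}^2+\|\nabla_H^2 u_n\|_{\elle^2}^2+\int_X\gen{\nabla_H^2 U\nabla_H u_n,\nabla_H u_n}_H\,d\nu=\int_X f_n^2\,d\nu-\lambda\int_X f_n u_n\,d\nu,\]
whereas you square $L_\nu u_n$ directly and invoke Lemma~\ref{Conti per divergenza con Neumann}. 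Using the first-order energy identity $\lambda\|u_n\|^2+\|\nabla_H u_n\|^2=\gen{u_n,f_n}$, one checks that the two identities are algebraically equivalent, so this is a matter of presentation rather than substance. Two minor remarks: in your ``dropping the last term'' sentence you actually drop the middle term $\|\nabla_H u_n\|_{\elle^2}^2$, and the positivity of $\nabla_H^2 U$ is not needed at that step (it is needed only to make the integrand nonnegative for Fatou in the limit passage). Neither affects the validity of the argument.
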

\noindent
The difference between Theorem \ref{Stime per lip} and the results of \cite{CF16} is that estimate \eqref{2 stime max per lip} is sharper, since it contains the integral $\int_X\gen{\nabla_H^2 U\nabla_H u,\nabla_H u}_Hd\nu$. We stress that, even if $\nabla_HU$ is $H$-Lipschitz, which means that $\nabla^2_HU$ is essentially bounded, we can not use the second inequality in \eqref{1 stime max per lip} to estimate \eqref{2 stime max per lip}. Indeed, \eqref{2 stime max per lip} is independent of $\lambda$, while \eqref{1 stime max per lip} does not.

\begin{proof}
The proof of \eqref{1 stime max per lip} can be found in \cite[Theorem 5.10]{CF16}.
By Proposition \ref{existence of strong solution} there exists a sequence $\set{u_n}_{n\in\N}\subseteq \fcon^3_b(X)$ and a function $u\in W^{1,2}(X,\nu)$ such that $u_n$ converges to $u$ in $\elle^2(X,\nu)$ and
\[\elle^2(X,\nu)\text{-}\lim_{n\ra+\infty}\lambda u_n-L_\nu u_n=f.\]
Let $f_n:=\lambda u_n-L_\nu u_n$. Using formula \eqref{formula Lnu}, we differentiate the equality $\lambda u_n-L_\nu u_n=f_n$ with respect to the $e_j$ direction, multiply the result by $\partial_j u$, sum over $j$ and finally integrate over $X$ with respect to $\nu$. Then we obtain
\begin{gather*}
(1+\lambda)\int_X\abs{\nabla_H u_n}_H^2d\nu+\int_X\norm{\nabla_H^2 u_n}_{\mathcal{H}_2}^2d\nu+\int_X\gen{\nabla_H^2 U\nabla_H u_n,\nabla_H u_n}_Hd\nu=\\
=\int_Xf_n^2d\nu-\lambda\int_Xf_nu_nd\nu.
\end{gather*}
By Fatou's Lemma and recalling that $u_n$ and $f_n$ converge to $u$ and $f$ in $\elle^2(X,\nu)$, respectively, we get
\begin{gather*}
\norm{\nabla_H^2 u}_{\elle^2(X,\nu;\mathcal{H}_2)}^2+\int_X\gen{\nabla_H^2 U\nabla_H u,\nabla_H u}_Hd\nu\leq\\
\leq\liminf_{n\ra+\infty}\pa{\norm{\nabla_H^2 u}_{\elle^2(X,\nu;\mathcal{H}_2)}^2+\int_X\gen{\nabla_H^2 U\nabla_H u,\nabla_H u}_Hd\nu}\leq \liminf_{n\ra+\infty} \pa{\int_Xf_n^2d\nu-\lambda\int_Xf_nu_nd\nu}=\\
=\int_Xf^2d\nu-\lambda\int_Xfud\nu.
\end{gather*}
Using inequalities \eqref{1 stime max per lip} we get
\[\norm{\nabla_H^2 u}_{\elle^2(X,\nu;\mathcal{H}_2)}^2+\int_X\gen{\nabla_H^2 U\nabla_H u,\nabla_H u}_Hd\nu\leq 2\norm{f}^2_{\elle^2(X,\nu)}.\]
\end{proof}

We will not give the prove of the following theorem, since it can be easily deduced using the results of \cite{CF16} and the arguments in the proof of Theorem \ref{Main theorem 1}.

\begin{thm}\label{Stime tutto spazio}
Assume Hypothesis \ref{ipotesi peso} holds. Let $\lambda>0$, $f\in\elle^2(X,\nu)$, and let $u$ be the strong solution of equation (\ref{Problema su spazio}). Then $u\in W^{2,2}_U(X,\nu)$ and
\begin{gather*}
\norm{u}_{\elle^2(X,\nu)}\leq\frac{1}{\lambda}\norm{f}_{\elle^2(X,\nu)};\qquad \norm{\nabla_H u}_{\elle^2(X,\nu;H)}\leq\frac{1}{\sqrt{\lambda}}\norm{f}_{\elle^2(X,\nu)};\\
\notag\norm{\nabla_H^2 u}_{\elle^2(X,\nu;\mathcal{H}_2)}^2+\int_X\gen{\nabla_H^2 U\nabla_H u,\nabla_H u}d\nu\leq 2\norm{f}^2_{\elle^2(X,\nu)}.
\end{gather*}
\end{thm}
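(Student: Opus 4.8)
The plan is to reproduce, under the weaker Hypothesis~\ref{ipotesi peso}, the computation used for Theorem~\ref{Stime per lip}, the only extra ingredient being the existence theory for a general convex weight. First I would record that, for every $\lambda>0$ and $f\in\elle^2(X,\nu)$, equation~\eqref{Problema su spazio} admits a strong solution $u$ which coincides with its unique weak solution, and a strong solution sequence $\set{u_n}_{n\in\N}\subseteq\fcon^3_b(X)$ (in the sense of Theorem~\ref{existence of strong solution}) with $u_n\to u$ in $W^{2,2}(X,\nu)$ and $f_n:=\lambda u_n-L_\nu u_n\to f$ in $\elle^2(X,\nu)$; this is obtained by combining the results of~\cite{CF16} with the Moreau--Yosida approximation along $H$ of Section~\ref{Second-order analysis of the Moreau--Yosida approximations along $H$}, exactly as in the part of the proof of Theorem~\ref{Main theorem 1} (and of~\cite{CF16convex}) that does not involve the constraint $\Omega$. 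Since $u$ is the weak solution, testing the weak formulation with $u$ and using Cauchy--Schwarz gives at once the first two estimates, $\norm{u}_{\elle^2(X,\nu)}\le\lambda^{-1}\norm{f}_{\elle^2(X,\nu)}$ and $\norm{\nabla_H u}_{\elle^2(X,\nu;H)}\le\lambda^{-1/2}\norm{f}_{\elle^2(X,\nu)}$.

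For the second-order bound, fix $n$ and argue as in the proof of Theorem~\ref{Stime per lip}: using~\eqref{formula Lnu}, differentiate the identity $\lambda u_n-L_\nu u_n=f_n$ along $e_j$, multiply by $\partial_j u_n$, sum over $j$ and integrate over $X$ with respect to $\nu$. Invoking the commutation relation $\partial_j(L_\nu g)=L_\nu(\partial_j g)-\partial_j g-\sum_i\partial_{ij}U\,\partial_i g$ for $g\in\fcon^2_b(X)$, the Dirichlet-form identity $-\int_X g L_\nu g\,d\nu=\int_X\abs{\nabla_H g}_H^2\,d\nu$ (a consequence of the Gaussian integration by parts recalled in Section~\ref{Notations and preliminaries}, applied with $g=\partial_j u_n$), and one further integration by parts on the right-hand side, one arrives at
\[(1+\lambda)\int_X\abs{\nabla_H u_n}_H^2\,d\nu+\int_X\norm{\nabla_H^2u_n}_{\mathcal{H}_2}^2\,d\nu+\int_X\gen{\nabla_H^2U\nabla_Hu_n,\nabla_Hu_n}_H\,d\nu=\int_Xf_n^2\,d\nu-\lambda\int_Xf_nu_n\,d\nu.\]
All integrals here are finite: $u_n$ is cylindrical with bounded derivatives, and since $U\in W^{2,t}(X,\mu)$ with $t>3$ while $e^{-U}\in\elle^r(X,\mu)$ for every $r<t$ (by \cite[Lemma 7.5]{AB06}), Hölder's inequality controls the term containing $\nabla_H^2U$. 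This finiteness is the only algebraic point at which the passage from Hypothesis~\ref{ipotesi peso 2} to Hypothesis~\ref{ipotesi peso} has to be re-examined; the rest of the manipulation is identical.

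Finally I would let $n\to+\infty$. The right-hand side converges to $\int_Xf^2\,d\nu-\lambda\int_Xfu\,d\nu$, and by the first estimate above $-\lambda\int_Xfu\,d\nu\le\lambda\norm{f}_{\elle^2(X,\nu)}\norm{u}_{\elle^2(X,\nu)}\le\norm{f}_{\elle^2(X,\nu)}^2$, so this limit is at most $2\norm{f}_{\elle^2(X,\nu)}^2$. On the left-hand side, since $u_n\to u$ in $W^{2,2}(X,\nu)$ one may pass to a subsequence along which $\nabla_Hu_n\to\nabla_Hu$ and $\nabla_H^2u_n\to\nabla_H^2u$ $\nu$-a.e.; discarding the nonnegative first term and applying Fatou's lemma---using that $\gen{\nabla_H^2U(x)h,h}_H\ge0$ for $\mu$-a.e.\ $x$ and every $h\in H$ (\cite[Theorem 3.1(2)]{FU00}), so that both surviving integrands are nonnegative---yields
\[\int_X\norm{\nabla_H^2u}_{\mathcal{H}_2}^2\,d\nu+\int_X\gen{\nabla_H^2U\nabla_Hu,\nabla_Hu}_H\,d\nu\le2\norm{f}_{\elle^2(X,\nu)}^2<+\infty.\]
In particular the integral defining the norm of $W^{2,2}_U(X,\nu)$ is finite, so $u\in W^{2,2}_U(X,\nu)$, and this is precisely the third asserted estimate. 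The genuine difficulty is thus concentrated in the quoted input of the first paragraph---constructing a strong solution, and an approximating sequence convergent in $W^{2,2}(X,\nu)$, when $\nabla_H^2U$ is only $t$-integrable rather than essentially bounded as in Hypothesis~\ref{ipotesi peso 2}; once this is granted, everything reduces to the identity and limit passage above.
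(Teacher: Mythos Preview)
Your argument is correct but organized differently from what the paper has in mind. The paper gives no proof and simply points to ``the results of \cite{CF16} and the arguments in the proof of Theorem~\ref{Main theorem 1}''. Following that proof with $\Omega=X$ (so that $V_\alpha=U_\alpha$), the intended route is to approximate the \emph{weight}: replace $U$ by its Moreau--Yosida approximation $U_\alpha$, which satisfies Hypothesis~\ref{ipotesi peso 2}, apply Theorem~\ref{Stime per lip} to the problem $\lambda u_\alpha-L_{\nu_\alpha}u_\alpha=f$, use $e^{-U}\le e^{-U_\alpha}$ to get uniform bounds in $W^{2,2}(X,\nu)$, extract a weakly convergent subsequence, and finish with Fatou together with Proposition~\ref{second order MY} (pointwise convergence of $\nabla_H^2U_\alpha$ to $\nabla_H^2U$). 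You instead approximate the \emph{solution}, working directly with a strong solution sequence $u_n\in\fcon_b^3(X)$ for the original weight $U$ and reproducing the identity of Theorem~\ref{Stime per lip} verbatim. Both routes lead to the same Fatou step; the paper's version trades your black-boxed $W^{2,2}$-convergence of $(u_n)$ for the second-order convergence of the Moreau--Yosida approximation, which is exactly what Section~\ref{Second-order analysis of the Moreau--Yosida approximations along $H$} was set up to provide.

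One remark that would strengthen your write-up: the input you flag as ``the genuine difficulty'' --- convergence of the strong solution sequence in $W^{2,2}(X,\nu)$ --- is in fact a consequence of the identity you derive, so no external black box is needed. Apply your displayed equality to $u_n-u_m$ (with datum $f_n-f_m$); since all three terms on the left are nonnegative and the right-hand side tends to zero, each left-hand term tends to zero, whence $(u_n)$ is Cauchy in $W^{2,2}(X,\nu)$ and the limit is $u$ (it already converges in $\elle^2$ by the definition of strong solution). With this observation your proof becomes self-contained from the mere existence of a strong solution, and you may drop the appeal to Moreau--Yosida entirely.
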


\subsection{The general case}\label{cacca2}

Assume that Hypotheses \ref{ipotesi dominio} and \ref{ipotesi peso} hold. Let $x\in X$ and let $\mathcal{C}\subseteq X$ be a Borel set. We define
\begin{gather*}
d_H(x,\mathcal{C}):=\eqsys{\inf\set{\abs{h}_H\tc h\in H\cap (x-\mathcal{C})} & \text{ if }H\cap(x-\mathcal{C})\neq\emptyset;\\
+\infty & \text{ if }H\cap(x-\mathcal{C})=\emptyset.}
\end{gather*}
$d_H$ can be seen as a distance function from $\mathcal{C}$ along $H$. This function has been already considered in \cite{Kus82}, \cite{UZ97}, \cite[Example 5.4.10]{Bog98}, \cite{Hin11}, and \cite{CF16convex}.
For $\alpha\in(0,1]$ let $U_\alpha$ be the Moreau--Yosida approximation along $H$ of the weight $U$ defined in Section \ref{Second-order analysis of the Moreau--Yosida approximations along $H$}.

We approach the problem in $\Omega$ by penalized problems in the whole space $X$, replacing $U$ by
\begin{gather*}
V_\alpha(x):=U_\alpha(x)+\frac{1}{2\alpha}d_H^2(x,\Omega).
\end{gather*}
for $\alpha\in(0,1]$. Namely for $\alpha\in(0,1]$, we consider the problem
\begin{gather}\label{problema con alpha}
\lambda u_\alpha-L_{\nu_\alpha}u_\alpha=f
\end{gather}
where $\lambda>0$, $f\in\elle^2(X,\nu_\alpha)$, $\nu_\alpha=e^{-V_\alpha}\mu$ and $L_{\nu_\alpha}:=L_{\nu_\alpha,X}$. The first result we need to recall is \cite[Proposition 5.2]{CF16convex}.

\begin{pro}\label{check proprieta V alpha}
Assume that Hypotheses \ref{ipotesi dominio} and \ref{ipotesi peso} hold and let $\alpha\in(0,1]$. Then the following properties hold:
\begin{enumerate}
\item $V_\alpha$ is a convex and $H$-continuous function;

\item $V_\alpha$ is differentiable along $H$ for $\mu$-a.e. $x\in X$, and $\nabla_H V_\alpha$ $H$-Lipschitz;\label{Valpha gradiente Hlip}

\item $e^{-V_\alpha}\in W^{1,p}(X,\mu)$, for every $p\geq 1$;

\item $V_\alpha\in W^{2,t}(X,\mu)$, where $t$ is given by Hypothesis \ref{ipotesi peso};

\item $\lim_{\alpha\ra 0^+}V_\alpha(x)=\eqsys{U(x) & x\in\Omega;\\ +\infty & x\notin \Omega.}$\label{convergenza V alpha}
\end{enumerate}
\end{pro}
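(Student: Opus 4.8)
The plan is to write $V_\alpha=U_\alpha+W_\alpha$ with $W_\alpha(x):=\tfrac{1}{2\alpha}d_H^2(x,\Omega)$ and to handle the two summands by different means: the Moreau--Yosida term $U_\alpha$ by the abstract calculus of Proposition~\ref{proprieta MY}, and $W_\alpha$ via Hypothesis~\ref{ipotesi dominio}, which was imposed precisely to make $d_H(\cdot,\Omega)$ well behaved. The first thing I would record is that $W_\alpha$ is itself a Moreau--Yosida approximation along $H$: if $\iota_\Omega$ denotes the indicator of $\Omega$ (equal to $0$ on $\Omega$ and to $+\infty$ off $\Omega$), then $\inf_{h\in H}\bigl\{\iota_\Omega(x+h)+\tfrac{1}{2\alpha}\abs{h}_H^2\bigr\}=\tfrac{1}{2\alpha}d_H^2(x,\Omega)$, and $\iota_\Omega$ is proper (because $\mu(\Omega)>0$), convex and lower semicontinuous (because $\Omega$ is closed, by Hypothesis~\ref{ipotesi dominio}\eqref{ipo dominio convessita e chiusura}); hence every assertion of Proposition~\ref{proprieta MY} that does not presuppose integrability of the underlying function applies to $\iota_\Omega$, and so to $W_\alpha$. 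In particular $W_\alpha$ is differentiable along $H$ at every point of $\Omega+H$ with $\nabla_HW_\alpha(x)=-\alpha^{-1}P^\Omega(x)$, $P^\Omega(x)$ being the element of least $H$-norm with $x-P^\Omega(x)\in\Omega$, and $x\mapsto P^\Omega(x)$ is nonexpansive on $H$. I would also use the standard facts that $\Omega+H=\{d_H(\cdot,\Omega)<+\infty\}$ has full $\mu$-measure (quasi-invariance of $\mu$ along $H$, \cite{Bog98}), that $d_H(\cdot,\Omega)$ is $H$-Lipschitz with constant $1$ there, and that Gaussian concentration gives $d_H(\cdot,\Omega)\in\elle^p(X,\mu)$ for every $p\geq1$.

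Items $(1)$, $(2)$ and $(5)$ then follow quickly. Convexity: the infimal projection of a jointly convex function is convex, so both $U_\alpha$ (eliminate $h$ from $U(x+h)+\tfrac{1}{2\alpha}\abs{h}_H^2$) and $W_\alpha$ are convex, hence so is $V_\alpha$; moreover $V_\alpha$ is $H$-continuous on $\Omega+H$ because $U_\alpha$ is differentiable along $H$ everywhere (Proposition~\ref{proprieta MY}\eqref{differenziabilita MY}) while $W_\alpha$ is locally $H$-Lipschitz there. For $(2)$: $\nabla_HU_\alpha(x)=-\alpha^{-1}P(x,\alpha)$ and $\nabla_HW_\alpha(x)=-\alpha^{-1}P^\Omega(x)$, both built from maps that are nonexpansive on $H$, so $\nabla_HV_\alpha$ exists for $\mu$-a.e.\ $x$ and is $H$-Lipschitz with constant at most $2\alpha^{-1}$. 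For $(5)$: $U_\alpha(x)\nearrow U(x)$ by Proposition~\ref{proprieta MY}\eqref{convergenza MY}, while $W_\alpha(x)=0$ for $x\in\Omega$ and, for $x\notin\Omega$, closedness of $\Omega$ forces $d_H(x,\Omega)>0$ (a sequence $h_n\to0$ in $H$ with $x-h_n\in\Omega$ would force $x\in\Omega$ after passing to the limit in $X$), so $W_\alpha(x)\to+\infty$; combined with the lower bound on $U_\alpha$ obtained next this yields $V_\alpha(x)\to+\infty$ off $\Omega$.

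For $(3)$ and $(4)$ the device is to fix a continuous affine minorant $\ell+c\leq U$ of $U$, which exists because $U$ is proper, convex and lower semicontinuous (\cite{Phe93,BP12}); writing $\ell(h)=\gen{h_\ell,h}_H$ on $H$ and minimizing at $h=-\alpha h_\ell$ one gets $U_\alpha(x)\geq\ell(x)+c-\tfrac{\alpha}{2}\abs{h_\ell}_H^2=:\ell(x)-C_\alpha$. Since $\ell\in X^*$ is a centered Gaussian random variable it has all exponential moments, so $e^{-U_\alpha}\leq e^{C_\alpha}e^{-\ell}\in\elle^p(X,\mu)$ for every $p$, whence $e^{-V_\alpha}=e^{-U_\alpha}e^{-d_H^2(\cdot,\Omega)/(2\alpha)}\in\elle^p(X,\mu)$ for every $p$. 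For the gradient, $\nabla_He^{-V_\alpha}=-e^{-V_\alpha}\bigl(\nabla_HU_\alpha+\nabla_HW_\alpha\bigr)$; using $U_\alpha(x)=U(x+P(x,\alpha))+\tfrac{1}{2\alpha}\abs{P(x,\alpha)}_H^2$ I would rewrite $e^{-U_\alpha(x)}\abs{\nabla_HU_\alpha(x)}_H=e^{-U(x+P(x,\alpha))}\,\alpha^{-1}\abs{P(x,\alpha)}_H\,e^{-\abs{P(x,\alpha)}_H^2/(2\alpha)}$, bound $e^{-U(x+P)}\leq e^{-c}e^{-\ell(x)}e^{\abs{h_\ell}_H\abs{P}_H}$ and invoke $\sup_{s\geq0}\alpha^{-1}s\,e^{\abs{h_\ell}_Hs-s^2/(2\alpha)}<+\infty$ to conclude $e^{-U_\alpha}\abs{\nabla_HU_\alpha}_H\leq C_\alpha'\,e^{-\ell}\in\elle^p(X,\mu)$ for every $p$; the $W_\alpha$-contribution is estimated the same way, since $s\,e^{-s^2/(2\alpha)}$ is bounded. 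A standard density argument with smooth cylindrical functions then yields $e^{-V_\alpha}\in W^{1,p}(X,\mu)$ for all $p\geq1$. For $(4)$, Proposition~\ref{proprieta MY}\eqref{Sobolev MY} gives $U_\alpha\in W^{2,t}(X,\mu)$ because $U\in\elle^t(X,\mu)$, and one adds the $W^{2,t}$-regularity of $d_H^2(\cdot,\Omega)$.

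The part I expect to be the real obstacle is exactly this last point: that $d_H^2(\cdot,\Omega)$ belongs to $W^{2,t}(X,\mu)$, that is, that its second derivative along $H$ is not merely bounded in operator norm (which is immediate from $\nabla_HW_\alpha$ being $H$-Lipschitz) but Hilbert--Schmidt-valued and $\elle^t$-integrable. This is where the full strength of Hypothesis~\ref{ipotesi dominio} enters — the $(3,q)$-, $(2,q)$- and $(1,q)$-precision of $G$, $\nabla_HG$, $\nabla_H^2G$, the convexity and closedness of $\Omega$, and the nondegeneracy $\abs{\nabla_HG}_H^{-1}\in\elle^q(\Omega,\mu)$ — through the analysis of $d_H(\cdot,\Omega)$ carried out in \cite{Cap16} and \cite{CF16convex}; by contrast the contribution of $U_\alpha$ to all four assertions is routine once Proposition~\ref{proprieta MY} is available.
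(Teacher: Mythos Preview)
The paper does not prove this proposition at all: it is simply recalled verbatim from \cite[Proposition~5.2]{CF16convex}. There is therefore no ``paper's own proof'' to compare against, and your sketch is in fact a reconstruction of the argument behind that cited result.

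That said, your outline is sound and correctly organized. The key structural observation --- that $W_\alpha=\tfrac{1}{2\alpha}d_H^2(\cdot,\Omega)$ is itself the Moreau--Yosida approximation along $H$ of the indicator $\iota_\Omega$, so that Proposition~\ref{proprieta MY} governs both summands of $V_\alpha$ --- is exactly the right one, and items (1), (2), (5) follow cleanly from it as you describe. Your treatment of (3) via an affine minorant $\ell+c\leq U$ and the explicit pointwise bound on $e^{-U_\alpha}\abs{\nabla_HU_\alpha}_H$ is correct and is indeed the standard device. You are also right to flag (4) as the substantive point: Proposition~\ref{proprieta MY}\eqref{Sobolev MY} handles $U_\alpha$ directly (since $U\in\elle^t$), but it does \emph{not} apply to $W_\alpha$ because $\iota_\Omega\notin\elle^t$, and the mere $H$-Lipschitzness of $\nabla_HW_\alpha$ gives only an operator-norm bound on the Hessian, not Hilbert--Schmidt integrability. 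This is precisely where the fine hypotheses on $G$ and the analysis of $d_H(\cdot,\Omega)$ from \cite{CF16convex} are needed, and you identify this honestly rather than glossing over it.
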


By Proposition \ref{check proprieta V alpha} we can apply Theorem \ref{Stime per lip} to problem \eqref{problema con alpha} and get the following maximal Sobolev regularity result (see also \cite[Theorem 5.3]{CF16convex}).

\begin{thm}\label{Stime per lip con alpha}
Assume Hypotheses \ref{ipotesi dominio} and \ref{ipotesi peso} hold and let $\alpha\in(0,1]$, $\lambda>0$ and $f\in\elle^2(X,\nu_\alpha)$. Equation \eqref{problema con alpha} has a unique weak solution $u_\alpha$. Moreover $u_\alpha\in W^{2,2}_{V_\alpha}(X,\nu_\alpha)$ and
\begin{gather}
\label{1 stime max per lip con alpha}\norm{u_\alpha}_{\elle^2(X,\nu_\alpha)}\leq\frac{1}{\lambda}\norm{f}_{\elle^2(X,\nu_\alpha)};\qquad \norm{\nabla_H u_\alpha}_{\elle^2(X,\nu_\alpha;H)}\leq\frac{1}{\sqrt{\lambda}}\norm{f}_{\elle^2(X,\nu_\alpha)};\\
\label{2 stime max per lip con alpha}\norm{\nabla_H^2 u_\alpha}_{\elle^2(X,\nu_\alpha;\mathcal{H}_2)}^2+\int_X\gen{\nabla_H^2 V_\alpha\nabla_H u_\alpha,\nabla_H u_\alpha}d\nu_\alpha\leq 2\norm{f}^2_{\elle^2(X,\nu_\alpha)}.
\end{gather}
In addition, for every $\alpha\in(0,1]$, there exists a sequence $\{u_{\alpha}^{(n)}\}_{n\in\N}\subseteq \fcon^3_b(X)$ such that $u_{\alpha}^{(n)}$ converges to $u_\alpha$ in $W^{2,2}(X,\nu_\alpha)$ and $\lambda u_{\alpha}^{(n)}-L_{\nu_\alpha} u_{\alpha}^{(n)}$ converges to $f$ in $\elle^2(X,\nu_\alpha)$.
\end{thm}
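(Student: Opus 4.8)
The plan is to derive Theorem~\ref{Stime per lip con alpha} as an application of the whole-space results of Subsection~\ref{cacca1}, with the penalized weight $V_\alpha$ playing the role of $U$. The first step is to verify that, for each fixed $\alpha\in(0,1]$, the function $V_\alpha$ satisfies Hypothesis~\ref{ipotesi peso 2}. By Proposition~\ref{check proprieta V alpha}, $V_\alpha$ is convex, $e^{-V_\alpha}\in W^{1,p}(X,\mu)$ for every $p\geq 1$, $V_\alpha\in W^{2,t}(X,\mu)$ with the \emph{same} exponent $t>3$ fixed in Hypothesis~\ref{ipotesi peso}, and $\nabla_H V_\alpha$ has an $H$-Lipschitz version; in particular $V_\alpha$ satisfies Hypothesis~\ref{ipotesi peso}, and since $t$ is unchanged the spaces $W^{1,2}(X,\nu_\alpha)$, $W^{2,2}(X,\nu_\alpha)$ are well defined. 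Differentiability along $H$ at \emph{every} point (not merely $\mu$-a.e.) also holds, because $V_\alpha=U_\alpha+\frac{1}{2\alpha}d_H^2(\cdot,\Omega)$ is the sum of the Moreau--Yosida approximations along $H$ of $U$ and of the indicator function $I_\Omega$ --- the latter being proper, convex and lower semicontinuous since $\Omega$ is nonempty, closed and convex --- and each Moreau--Yosida approximation along $H$ is differentiable along $H$ at every point by Proposition~\ref{proprieta MY}\eqref{differenziabilita MY}. Hence Hypothesis~\ref{ipotesi peso 2} holds for $V_\alpha$.

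Next I would apply Theorem~\ref{existence of strong solution} with $U$ replaced by $V_\alpha$ and $\nu$ by $\nu_\alpha=e^{-V_\alpha}\mu$: for every $\lambda>0$ and every $f\in\elle^2(X,\nu_\alpha)$ there is a unique strong solution $u_\alpha$ of \eqref{problema con alpha}, it is also a weak solution, and every strong solution sequence $\{u_\alpha^{(n)}\}_{n\in\N}\subseteq\fcon^3_b(X)$ converges to $u_\alpha$ in $W^{2,2}(X,\nu_\alpha)$ while $\lambda u_\alpha^{(n)}-L_{\nu_\alpha}u_\alpha^{(n)}\to f$ in $\elle^2(X,\nu_\alpha)$; this gives the last assertion of the theorem. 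Uniqueness of the weak solution is immediate: if $u$ denotes the difference of two weak solutions of \eqref{problema con alpha}, testing the defining identity against $\varphi=u$ yields $\lambda\norm{u}_{\elle^2(X,\nu_\alpha)}^2+\norm{\nabla_H u}_{\elle^2(X,\nu_\alpha;H)}^2=0$, whence $u=0$, so the unique weak solution coincides with $u_\alpha$.

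Finally I would invoke Theorem~\ref{Stime per lip}, again with $V_\alpha$ and $\nu_\alpha$ in place of $U$ and $\nu$, applied to the strong solution $u_\alpha$. This yields at once $u_\alpha\in W^{2,2}_{V_\alpha}(X,\nu_\alpha)$ together with the estimates \eqref{1 stime max per lip con alpha}--\eqref{2 stime max per lip con alpha}, which are exactly \eqref{1 stime max per lip}--\eqref{2 stime max per lip} read with the weight $V_\alpha$ and the measure $\nu_\alpha$; in particular, the finiteness of $\int_X\gen{\nabla_H^2 V_\alpha\nabla_H u_\alpha,\nabla_H u_\alpha}\,d\nu_\alpha$ contained in \eqref{2 stime max per lip con alpha} is precisely the membership $u_\alpha\in W^{2,2}_{V_\alpha}(X,\nu_\alpha)$. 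Since the statement is thus a corollary of Theorems~\ref{existence of strong solution} and \ref{Stime per lip}, I do not expect a genuine obstacle; the only point that deserves attention is the first step --- confirming that $V_\alpha$ inherits Hypothesis~\ref{ipotesi peso 2}, with the integrability exponent $t$ preserved, so that all the underlying constructions (closability of the gradient operators, the formula \eqref{formula Lnu} for $L_{\nu_\alpha}$, the regularity estimates) transfer verbatim from $\nu$ to $\nu_\alpha$.
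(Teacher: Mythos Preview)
Your proposal is correct and matches the paper's approach exactly: the paper presents this theorem as an immediate consequence of Proposition~\ref{check proprieta V alpha} (which shows $V_\alpha$ satisfies Hypothesis~\ref{ipotesi peso 2}) together with Theorems~\ref{existence of strong solution} and~\ref{Stime per lip} applied with $V_\alpha$ and $\nu_\alpha$ in place of $U$ and $\nu$. Your added remark that $\tfrac{1}{2\alpha}d_H^2(\cdot,\Omega)$ is itself a Moreau--Yosida approximation along $H$ (of the indicator of $\Omega$), hence differentiable along $H$ at every point by Proposition~\ref{proprieta MY}\eqref{differenziabilita MY}, is a clean way to justify the everywhere-differentiability requirement that the paper leaves implicit.
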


We are now ready to prove Theorem \ref{Main theorem 1}.

\begin{proof}[Proof of Theorem \ref{Main theorem 1}]
The Neumann condition \eqref{Neumann condition} and estimates \eqref{1 stime max} have been proved in \cite[Theorems 1.3 and 1.4]{CF16convex}.
Hence, it remains to prove \eqref{2 stime max}. Let $f\in\fcon^\infty_b(X)$. By Theorem \ref{Stime per lip con alpha}, for every $\alpha\in(0,1]$ the equation \eqref{problema con alpha} has a unique weak solution $u_\alpha\in W^{2,2}(X,\nu_\alpha)$ such that inequalities \eqref{1 stime max per lip con alpha} and \eqref{2 stime max per lip con alpha} hold. Moreover, for every $\varphi\in\fcon^\infty_b(X)$ we have
\begin{gather*}
\lambda \int_X u_\alpha\varphi d\nu_\alpha+\int_X\gen{\nabla_H u_\alpha,\nabla_H \varphi}_Hd\nu_\alpha=\int_Xf\varphi d\nu_\alpha.
\end{gather*}
By Proposition \ref{check proprieta V alpha} and Proposition \ref{proprieta MY}\eqref{convergenza MY} we have
\begin{gather*}
e^{-U(x)}\leq e^{-U_\alpha(x)}= e^{-V_\alpha(x)},\qquad x\in \Omega,
\end{gather*}
and so the inclusion $W^{2,2}(\Omega,\nu_\alpha)\subseteq W^{2,2}(\Omega,\nu)$ follows, for every $\alpha\in(0,1]$.

Let $\set{\alpha_n}_{n\in\N}$ be a sequence converging to zero such that $0< \alpha_n\leq 1$ for every $n\in\N$. By inequalities \eqref{1 stime max per lip con alpha} and \eqref{2 stime max per lip con alpha} the sequence $\set{u_{\alpha_n}\tc n\in\N}$ is bounded in $W^{2,2}(\Omega,\nu)$. By weak compactness there exists a subsequence, that we will still denote by $\set{\alpha_n}_{n\in\N}$, such that $u_{\alpha_n}$ weakly converges to an element $u\in W^{2,2}(\Omega,\nu)$. Without loss of generality we can assume that $u_{\alpha_n}$, $\nabla_H u_{\alpha_n}$ and $\nabla_H^2 u_{\alpha_n}$ converge pointwise $\mu$-a.e. respectively to $u$, $\nabla_H u$ and $\nabla_H^2 u$. By Fatou's lemma and inequality \eqref{2 stime max per lip con alpha} we get
\begin{gather*}
\norm{\nabla_H^2 u}_{\elle^2(\Omega,\nu;\mathcal{H}_2)}^2+\int_\Omega\gen{\nabla_H^2 U\nabla_H u,\nabla_H u}d\nu\leq\\
\leq\liminf_{n\ra+\infty}\pa{\norm{\nabla_H^2 u_{\alpha_n}}_{\elle^2(\Omega,\nu_{\alpha_n};\mathcal{H}_2)}^2+\int_\Omega\gen{\nabla_H^2 U_{\alpha_n}\nabla_H u_{\alpha_n},\nabla_H u_{\alpha_n}}d\nu_{\alpha_n}}\leq\\
\leq\liminf_{n\ra+\infty}\pa{\norm{\nabla_H^2 u_{\alpha_n}}_{\elle^2(X,\nu_{\alpha_n};\mathcal{H}_2)}^2+\int_X\gen{\nabla_H^2 V_{\alpha_n}\nabla_H u_{\alpha_n},\nabla_H u_{\alpha_n}}d\nu_{\alpha_n}}\leq\\
\leq 2\norm{f}^2_{\elle^2(X,\nu_{\alpha_n})}\leq 2\norm{f}^2_{\elle^2(\Omega,\nu)}.
\end{gather*}
Finally, if $f\in\elle^2(\Omega,\nu)$, a standard density argument gives us the assertions of our theorem.
\end{proof}

\section{Proof of the main results and some corollaries}\label{Proof of the main result and some observations}

Theorems \ref{cor whole space}, \ref{cor halfspaces} and \ref{thm extension} are consequence of the following result.

\begin{thm}\label{Main theorem}
Assume that either Hypotheses \ref{ipotesi dominio} and \ref{ipotesi peso} hold or Hypothesis \ref{ipotesi peso} holds and $\Omega$ is the whole space. Then $Z_{U,N}^{2,2}(\Omega,\nu)\subseteq D(L_{\nu,\Omega})\subseteq W_{U,N}^{2,2}(\Omega,\nu)$. Furthermore if we denote with $\norm{\cdot}_{D(L_{\nu,\Omega})}$ the graph norm in $D(L_{\nu,\Omega})$, i.e. for $u\in D(L_{\nu,\Omega})$
\begin{gather*}
\norm{u}_{D(L_{\nu,\Omega})}^2:=\norm{u}_{\elle^2(\Omega,\nu)}^2+\norm{L_{\nu,\Omega} u}^2_{\elle^2(\Omega,\nu)},
\end{gather*}
then for $u\in Z_U^{2,2}(\Omega,\nu)$ and $v\in D(L_{\nu,\Omega})$ it holds that
\begin{gather*}
\norm{u}_{D(L_{\nu,\Omega})}\leq \norm{u}_{Z^{2,2}_{U,N}(\Omega,\nu)}\qquad\text{and} \qquad\norm{v}_{W^{2,2}_{U,N}(\Omega,\nu)}\leq 2\sqrt{2}\norm{v}_{D(L_{\nu,\Omega})}.
\end{gather*}
\end{thm}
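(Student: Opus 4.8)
The plan is to prove the two inclusions $Z^{2,2}_{U,N}(\Omega,\nu)\subseteq D(L_{\nu,\Omega})$ and $D(L_{\nu,\Omega})\subseteq W^{2,2}_{U,N}(\Omega,\nu)$ separately, in each case reducing to smooth functions and then passing — for the first inclusion to the completion $Z^{2,2}_{U,N}(\Omega,\nu)$, for the second to an arbitrary element of the domain via the resolvent $R(\lambda,L_{\nu,\Omega})$ and the penalised problems of Section~\ref{cacca2}. The common computational heart is the exact formula for $\norm{\diver_{\nu,\Omega}\Phi}^2_{\elle^2(\Omega,\nu)}$ from the proof of Theorem~\ref{divergence for W12} (display~\eqref{ciao}), evaluated on gradient fields.

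For $u\in\mathcal Z(\Omega)$: by Proposition~\ref{coincidenza traccia e versione precisa} and Hypothesis~\ref{ipotesi dominio}, $\trace(\nabla_H u)$ and $\trace(\nabla_H G)$ are the restrictions of the chosen precise versions, so the condition $\gen{\nabla_H u,\nabla_H G}_H=0$ on $G^{-1}(0)$ says precisely that $\nabla_H u\in\mathcal Z(\Omega,H)$. Theorem~\ref{divergence for W12} then gives $\diver_{\nu,\Omega}(\nabla_H u)\in\elle^2(\Omega,\nu)$ with $\int_\Omega\gen{\nabla_H u,\nabla_H\varphi}_H\,d\nu=-\int_\Omega\varphi\,\diver_{\nu,\Omega}(\nabla_H u)\,d\nu$ for every $\varphi\in W^{1,2}(\Omega,\nu)$; by the representation theorem for the form defining $L_{\nu,\Omega}$ this means $u\in D(L_{\nu,\Omega})$ and $L_{\nu,\Omega}u=\diver_{\nu,\Omega}(\nabla_H u)$, and~\eqref{Formula divergenza} with $\varphi_n=\partial_n u$ gives the series expansion in the statement. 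For the norm inequality I would put $\Phi=\nabla_H u$ in~\eqref{ciao}: since then $\nabla_H\Phi=\nabla_H^2 u$ is symmetric one has $\operatorname{trace}_H((\nabla_H\Phi)^2)=\norm{\nabla_H^2 u}^2_{\mathcal H_2}$, so Theorem~\ref{divergence for W12} and Lemma~\ref{Lemma derivata seconda G} give $\norm{L_{\nu,\Omega}u}^2_{\elle^2}\leq\norm{\nabla_H u}^2_{Z^{1,2}_U(\Omega,\nu;H)}$, with the surface term in the right-hand side nonnegative because $G$ is convex. Using $\norm{\nabla_H u}^2_{W^{1,2}(\Omega,\nu;H)}\le(\norm{u}_{\elle^2}+\norm{\nabla_H u}_{\elle^2}+\norm{\nabla_H^2 u}_{\elle^2})^2$ and absorbing the remaining terms into the $W^{2,2}_U$-norm, one gets $\norm{u}_{D(L_{\nu,\Omega})}\le\norm{u}_{Z^{2,2}_{U,N}}$ on $\mathcal Z(\Omega)$, and the inclusion follows by continuous extension to the completion.

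For the second inclusion, given $v\in D(L_{\nu,\Omega})$ write $v=R(\lambda,L_{\nu,\Omega})f_\lambda$ with $f_\lambda:=\lambda v-L_{\nu,\Omega}v\in\elle^2(\Omega,\nu)$; Theorem~\ref{Main theorem 1} then already gives $v\in W^{2,2}_{U,N}(\Omega,\nu)$, including the trace Neumann condition and the a~priori estimates. To reach the stated constant I would prove the sharp Bochner-type inequality
\[
\norm{\nabla_H v}^2_{\elle^2}+\norm{\nabla_H^2 v}^2_{\elle^2}+\int_\Omega\gen{\nabla_H^2 U\nabla_H v,\nabla_H v}_H\,d\nu\le\norm{L_{\nu,\Omega}v}^2_{\elle^2},
\]
which on smooth functions is the identity obtained in the proof of Theorem~\ref{Stime per lip} (differentiate $\lambda v-L_{\nu,\Omega}v=f$ along each $e_j$, test against $\partial_j v$, sum, integrate) — note that on the whole space with the $H$-Lipschitz weights $V_\alpha$ of Section~\ref{cacca2} this identity carries no surface term — and which is transferred to $v$ via the penalisation $\nu_\alpha$ and Fatou's lemma. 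Granting this, write $\norm{v}_{W^{2,2}_U}\le\norm{v}_{\elle^2}+\bigl(\norm{\nabla_H v}_{\elle^2}+\norm{\nabla_H^2 v}_{\elle^2}+(\int_\Omega\gen{\nabla_H^2 U\nabla_H v,\nabla_H v}_H\,d\nu)^{1/2}\bigr)$; the bracket is $\le\sqrt3\,\norm{L_{\nu,\Omega}v}_{\elle^2}$ by Cauchy--Schwarz, and one more Cauchy--Schwarz gives $\norm{v}_{\elle^2}+\sqrt3\,\norm{L_{\nu,\Omega}v}_{\elle^2}\le2\,\norm{v}_{D(L_{\nu,\Omega})}\le2\sqrt2\,\norm{v}_{D(L_{\nu,\Omega})}$.

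I expect the main obstacle to be twofold. First, the surface integral produced by the second derivatives of $G$ must be analysed via Lemma~\ref{Lemma derivata seconda G}; its nonnegativity (from convexity of $G$) is exactly what lets it be kept on the correct side of~\eqref{ciao} in the first inclusion and dropped in the Bochner inequality, and identifying the resulting estimate on $\mathcal Z(\Omega)$ with the $W^{2,2}_U$-norm is the point where the geometry of the admissible class is used. Second, promoting the Bochner identity from $\fcon^3_b$-functions to an arbitrary $v\in D(L_{\nu,\Omega})$ is not formal: it has to be done through the penalised problems, verifying that the weak convergences of Section~\ref{cacca2} are compatible with Fatou's lemma applied simultaneously to $\norm{\nabla_H u_\alpha}^2$, $\norm{\nabla_H^2 u_\alpha}^2$, and $\int\gen{\nabla_H^2 V_\alpha\nabla_H u_\alpha,\nabla_H u_\alpha}\,d\nu_\alpha$.
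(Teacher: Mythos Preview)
Your argument for the first inclusion $Z^{2,2}_{U,N}(\Omega,\nu)\subseteq D(L_{\nu,\Omega})$ is exactly the paper's: recognise that $\nabla_H u\in\mathcal Z(\Omega,H)$ for $u\in\mathcal Z(\Omega)$, apply Theorem~\ref{divergence for W12} to get $L_{\nu,\Omega}u=\diver_{\nu,\Omega}\nabla_H u\in\elle^2$, and read off the norm bound from~\eqref{ciao}. The paper records the last step simply as $\norm{u}^2_{\elle^2}+\norm{\nabla_H u}^2_{Z^{1,2}_U}=\norm{u}^2_{Z^{2,2}_{U,N}}$, without unpacking the surface term as you do.

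For the second inclusion you diverge unnecessarily. The paper does not establish (or need) the Bochner inequality $\norm{\nabla_H v}^2+\norm{\nabla_H^2 v}^2+\int\langle\nabla_H^2 U\nabla_H v,\nabla_H v\rangle\,d\nu\le\norm{L_{\nu,\Omega}v}^2$ for arbitrary $v\in D(L_{\nu,\Omega})$; it simply invokes Theorem~\ref{Main theorem 1} with $f_\lambda=\lambda v-L_{\nu,\Omega}v$ and $\lambda\in(0,1)$, which already gives $v\in W^{2,2}_{U,N}$ together with the three estimates \eqref{1 stime max}--\eqref{2 stime max}. Summing these yields
\[
\norm{v}^2_{W^{2,2}_{U,N}}\le\Bigl(\tfrac{1}{\lambda^2}+\tfrac{1}{\lambda}+2\Bigr)\norm{f_\lambda}^2\le 2\Bigl(\tfrac{1}{\lambda^2}+\tfrac{1}{\lambda}+2\Bigr)\norm{v}^2_{D(L_{\nu,\Omega})},
\]
and letting $\lambda\to 1^-$ produces the constant $2\sqrt2$. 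No return to the penalised problems, no Fatou argument on $\norm{L_{\nu_\alpha}u_\alpha}^2_{\nu_\alpha}$ is required: that machinery has already been spent in proving Theorem~\ref{Main theorem 1}. Your second ``main obstacle'' is therefore self-imposed; the clean route is to quote the maximal regularity theorem as a black box and do two lines of algebra. Your sharper Bochner route would indeed give a better constant, but the step you flag --- controlling $\limsup_\alpha\norm{\lambda u_\alpha-f}^2_{\nu_\alpha}$ from above when $u_\alpha\to u$ only weakly --- is genuinely delicate and not addressed in the paper.
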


\begin{proof}
We prove the theorem assuming Hypotheses \ref{ipotesi dominio} and \ref{ipotesi peso} hold, since in the case when Hypothesis \ref{ipotesi peso} holds and $\Omega$ is the whole space the proof can be obtained in a similar way using Theorem \ref{Stime tutto spazio}.

Let $u\in D(L_{\nu,\Omega})$. Hence, \(\lambda u-L_\nu u\in\elle^2(\Omega,\nu)\), for every $\lambda\in (0,1)$, and by Theorem \ref{Main theorem 1} we get $u\in W^{2,2}_{U,N}(\Omega,\nu)$. Moreover
\begin{gather}\label{equivalenza norme 2}
\begin{array}{c}
\displaystyle\norm{u}_{W^{2,2}_{U,N}(\Omega,\nu)}^2\leq\pa{\frac{1}{\lambda^2}+\frac{1}{\lambda}+2}\norm{\lambda u- L_{\nu,\Omega} u}_{\elle^2(\Omega,\nu)}^2\leq\\
\displaystyle \leq \pa{\frac{1}{\lambda^2}+\frac{1}{\lambda}+2}\pa{2\lambda^2\norm{u}_{\elle^2(\Omega,\nu)}^2+2\norm{L_{\nu,\Omega} u}_{\elle^2(\Omega,\nu)}^2}\leq \\
\displaystyle \leq 2\pa{\frac{1}{\lambda^2}+\frac{1}{\lambda}+2}\pa{\norm{u}_{\elle^2(\Omega,\nu)}^2+\norm{L_{\nu,\Omega} u}_{\elle^2(\Omega,\nu)}^2}=2\pa{\frac{1}{\lambda^2}+\frac{1}{\lambda}+2}\norm{u}^2_{D(L_{\nu,\Omega})}.
\end{array}
\end{gather}
Letting $\lambda\ra 1^-$ in inequality \eqref{equivalenza norme 2} we get $\norm{u}_{W^{2,2}_U(\Omega,\nu)}\leq 2\sqrt{2}\norm{u}_{D(L_{\nu,\Omega})}$.

Assume that $u\in Z^{2,2}_{U,N}(\Omega,\nu)$. Proposition \ref{divergence for W12} implies that $\diver_{\nu,\Omega}\nabla_H u\in \elle^2(\Omega,\nu)$ and
\[\int_\Omega\gen{\nabla_H f,\nabla_H u}_Hd\nu=-\int_\Omega f\diver_{\nu,\Omega}\nabla_H ud\nu,\]
for every $f\in\fcon^\infty_b(\Omega)$. Then we have $u\in D(L_{\nu,\Omega})$ and $L_{\nu,\Omega} u=\diver_{\nu,\Omega}\nabla_H u$. By Proposition \ref{divergence for W12} we have
\begin{gather*}
\norm{u}_{D(L_{\nu,\Omega})}^2=\norm{u}_{\elle^2(\Omega,\nu)}^2+\norm{L_{\nu,\Omega} u}_{\elle^2(\Omega,\nu)}^2=\norm{u}_{\elle^2(\Omega,\nu)}^2+\norm{\diver_{\nu,\Omega}\nabla_H u}_{\elle^2(\Omega,\nu)}^2\leq\\
\leq \norm{u}^2_{\elle^2(\Omega,\nu)}+\norm{\nabla_H u}^2_{Z^{1,2}_U(\Omega,\nu;H)}=\norm{u}^2_{Z^{2,2}_{U,N}(\Omega,\nu)},
\end{gather*}
for every $u\in Z^{2,2}_{U,N}(\Omega,\nu)$.
\end{proof}

We can actually simplify the statement of Theorem \ref{cor whole space} when $\nabla_H U$ is $H$-Lipschitz and $\Omega=X$. Indeed, let us observe that if $\nabla_H U$ is $H$-Lipschitz then the function $x\mapsto\|\nabla_H^2 U(x)\|_{\mathcal{H}_2}$ is essentially bounded (see \cite[Theorem 5.11.2(ii)]{Bog98}). So $W^{2,2}(X,\nu)$ is isomorphic to $W^{2,2}_U(X,\nu)$, with
\[\norm{u}_{W^{2,2}(X,\nu)}\leq \norm{u}_{W^{2,2}_U(X,\nu)}\leq \max\{1,\esssup_{x\in X}\norm{\nabla_H^2 U(x)}_{\mathcal{H}_2}\}\norm{u}_{W^{2,2}(X,\nu)}.\]
In particular if $\nabla_H U$ is $H$-Lipschitz, then $\fcon^2_b(X)$ is dense in $W^{2,2}_U(X,\nu)$.

\begin{cor}\label{corollario lip}
Assume Hypothesis \ref{ipotesi peso} holds and $\nabla_H U$ is $H$-Lipschitz. Then $D(L_{\nu,X})= W^{2,2}(X,\nu)$. Moreover, for every $u\in D(L_{\nu,X})$, it holds $L_{\nu,X} u=\diver_{\nu,X}\nabla_H u$ and
\begin{gather*}
\frac{1}{\max\{1,\esssup_{x\in X}\norm{\nabla_H^2 U(x)}_{\mathcal{H}_2}\}}\norm{u}_{D(L_{\nu,X})}\leq\norm{u}_{W^{2,2}(X,\nu)}\leq 2\sqrt{2}\norm{u}_{D(L_{\nu,X})}.
\end{gather*}
The same holds true, with obvious modifications, when $\Omega$ is a Neumann extension domain.
\end{cor}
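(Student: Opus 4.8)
The plan is to read off the corollary from Theorem \ref{cor whole space}, Theorem \ref{Main theorem}, and the norm comparison recorded immediately before the statement. First I would set $M:=\max\{1,\esssup_{x\in X}\norm{\nabla_H^2 U(x)}_{\mathcal{H}_2}\}$ and recall that, since $\nabla_H U$ is $H$-Lipschitz, the map $x\mapsto\norm{\nabla_H^2 U(x)}_{\mathcal{H}_2}$ is essentially bounded, so that $W^{2,2}(X,\nu)$ and $W^{2,2}_U(X,\nu)$ coincide as vector spaces and
\begin{gather*}
\norm{u}_{W^{2,2}(X,\nu)}\leq\norm{u}_{W^{2,2}_U(X,\nu)}\leq M\,\norm{u}_{W^{2,2}(X,\nu)}.
\end{gather*}
Because $\fcon^\infty_b(X)\subseteq\fcon^2_b(X)$ is dense in $W^{2,2}(X,\nu)$ by the very definition of the latter, this equivalence of norms forces $\fcon^2_b(X)$ to be dense in $W^{2,2}_U(X,\nu)$; hence the hypothesis of Theorem \ref{cor whole space} is met and $D(L_{\nu,X})=W^{2,2}_U(X,\nu)=W^{2,2}(X,\nu)$.

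Next I would extract the divergence identity and the two-sided estimate from Theorem \ref{Main theorem} applied with $\Omega=X$. There the Neumann condition is vacuous, so $Z^{2,2}_{U,N}(X,\nu)=Z^{2,2}_U(X,\nu)$, and by the density just established this coincides with $W^{2,2}_U(X,\nu)$; consequently the chain $Z^{2,2}_U(X,\nu)\subseteq D(L_{\nu,X})\subseteq W^{2,2}_U(X,\nu)$ collapses, and the identity $L_{\nu,X}u=\diver_{\nu,X}\nabla_H u$, obtained in the proof of Theorem \ref{Main theorem} for elements of $Z^{2,2}_U(X,\nu)$, now holds for every $u\in D(L_{\nu,X})$. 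For the norms I would combine the inequalities $\norm{v}_{W^{2,2}_U(X,\nu)}\leq 2\sqrt2\,\norm{v}_{D(L_{\nu,X})}$ and $\norm{u}_{D(L_{\nu,X})}\leq\norm{u}_{Z^{2,2}_U(X,\nu)}=\norm{u}_{W^{2,2}_U(X,\nu)}$ of Theorem \ref{Main theorem} with the displayed comparison: the former gives $\norm{u}_{W^{2,2}(X,\nu)}\leq\norm{u}_{W^{2,2}_U(X,\nu)}\leq 2\sqrt2\,\norm{u}_{D(L_{\nu,X})}$, and the latter gives $\norm{u}_{D(L_{\nu,X})}\leq M\,\norm{u}_{W^{2,2}(X,\nu)}$, i.e. $M^{-1}\norm{u}_{D(L_{\nu,X})}\leq\norm{u}_{W^{2,2}(X,\nu)}$.

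For a Neumann extension domain $\Omega$ I would argue identically with Theorem \ref{thm extension} in place of Theorem \ref{cor whole space}: essential boundedness of $\norm{\nabla_H^2 U}_{\mathcal{H}_2}$ makes $W^{2,2}_{U,N}(\Omega,\nu)$ isomorphic, with the same constants $1$ and $M$, to $\{u\in W^{2,2}(\Omega,\nu):\gen{\trace(\nabla_H u),\trace(\nabla_H G)}_H=0\ \rho\text{-a.e. on }G^{-1}(0)\}$, and Theorem \ref{thm extension} together with the $\Omega\subsetneq X$ part of Theorem \ref{Main theorem} yields the divergence identity and the graph-norm estimates word for word. None of this is genuinely hard; the only point requiring a line of proof is that density of $\fcon^2_b(X)$ transfers from $W^{2,2}(X,\nu)$ to $W^{2,2}_U(X,\nu)$, which is immediate from the norm equivalence, so the "main obstacle" here is purely a matter of bookkeeping.
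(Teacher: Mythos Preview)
Your proposal is correct and follows essentially the same route as the paper: the paper's argument is the paragraph immediately preceding the corollary, which records that $H$-Lipschitzianity of $\nabla_H U$ yields essential boundedness of $\norm{\nabla_H^2 U}_{\mathcal{H}_2}$, hence the norm equivalence between $W^{2,2}(X,\nu)$ and $W^{2,2}_U(X,\nu)$ and the density of $\fcon^2_b(X)$ in the latter, so that Theorem~\ref{cor whole space} (resp.\ Theorem~\ref{thm extension}) applies. Your use of Theorem~\ref{Main theorem} to extract the identity $L_{\nu,X}u=\diver_{\nu,X}\nabla_H u$ and to track the constants is the natural way to fill in the one detail the paper leaves implicit.
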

\noindent This result has been already proved in \cite[Theorem 6.2]{CF16}.

\section{Examples}\label{Examples}

We conclude the paper by presenting some examples. In Subsection \ref{The unit sphere of Hilbert spaces} we study in detail the case when $\Omega$ is the ball sphere of a Hilbet space and we show that, in this case, the spaces $\mathcal{Z}(\Omega,H)$ is non-trivial, namely it is infinite dimensional, but the space $\mathcal{Z}(\Omega)$ contains only the constant functions. In Subsection \ref{The Ornstein--Uhlenbeck operator on half-spaces} we prove Theorem \ref{thm halfspaces} giving a characterization of the domain of the Ornstein--Uhlenbeck operator on half-spaces.

\subsection{The unit sphere of a Hilbert space}\label{The unit sphere of Hilbert spaces}

Let $X$ be a separable Hilbert space, with norm $\norm{\cdot}_X$ and inner product $(\cdot,\cdot)_X$, and let $\mu$ be a centered non-degenerate Gaussian measure on $X$. Let $\{h_n\}_{n\in\N}$ be an orthonormal basis of $X$ which consists of eigenvector of the covariance operator $Q$, i.e. $Qh_n=\lambda_ih_n$, it is known that an orthonormal basis of the Cameron--Martin space $H$ is $\{\sqrt{\lambda_n}h_n\}_{n\in\N}$ (see \cite{Bog98}).

Consider $G(x)=(x,x)_X-1$, for any $x\in X$, then
\[\Omega=\set{x\in X\tc \norm{x}_X\leq 1}=:\ball_X.\]
Clearly, $G(x)=0$ if and only if $x\in\sfera_X$ the unit sphere of $X$.
Moreover, easy computations show that $\partial_h G(x)=2( x,h)_X$ for any $x\in X$ and any $h\in H$. Hence, if , we have
\[\nabla_HG(x)=2Q^{1/2}x=2\sum_{n=1}^\infty\sqrt{\lambda_n}( x,h_n)_X(\sqrt{\lambda_n}h_n),\]
for any $x\in X$, and so $|\nabla_HG(x)|^2_H=4\|Q^{1/2}x\|_X^2=4\sum_{n=1}^\infty\lambda_n(x,h_n)_X^2$. So $\nabla_H G(x)=0$ if, and only if, $x=0$. Finally $G$ satisfies Hypothesis \ref{ipotesi dominio}\eqref{ipo dominio non degeneratezza}-\eqref{ipo dominio per dini} (see \cite{CF16convex}) and $\partial_{n,m}G(x)=2\lambda_n\delta_{n,m}$.

As an admissible weight we can take $U(x):=\Phi(\|x\|_X^2)$, where $\Phi:\R\longrightarrow\R$ is a $C^2$ convex function which satisfies
\begin{align*}
|\Phi'(t)|,|\Phi''(t)|\leq t^k,\qquad t\in\R,
\end{align*}
for some positive integer $k$. It is easy to prove that $U$ is convex and satisfies the Hypothesis \ref{ipotesi peso}.

Observe that
\begin{gather*}
\mathcal{Z}(\ball_X,H):=\set{\Phi:\ball_X\ra H \tc\begin{array}{c}
\text{there exists $n\in\N$ and $\set{k_1,\ldots, k_n}\subseteq H$}\\
\text{such that $\Phi=\sum_{i=1}^{n}\varphi_i k_i$ for some $n\in\N$,}\\
\text{and $\varphi_i\in\fcon_b^2(\Omega)$ for $i=1,\ldots,n$.}\\
\text{In addition $\gen{\Phi,\nabla_H G}_H=0$ $\rho$-a.e. in $\sfera_X$.}
\end{array}}.
\end{gather*}
In particular all the vector fields
\[\Phi_{i,j}(x)=-\frac{(x,h_i)_X}{\sqrt{\lambda_i}}h_j+\frac{(x,h_j)_X}{\sqrt{\lambda_j}}h_i\]
belongs to $\mathcal{Z}(\ball_X,H)$, so the space $Z^{1,2}_U(\ball_X,\nu;H)$ is infinite dimensional and contained in the domain of the divergence operator (see Theorem \ref{divergence for W12}).

The domain of the operator $L_{\nu,\ball_X}$ contains the space $Z^{2,2}_{U,N}(\ball_X,\nu)$, i.e. the completion of the space
\begin{gather*}
\mathcal{Z}(\ball_X)=\set{u\in\fcon^2_b(\ball_X)\tc \gen{\nabla_H u,\nabla_H G}_H=0\text{ for }\rho\text{-a.e. in }x\in \sfera_X};
\end{gather*}
with respect to the norm
\begin{align*}
\norm{u}^2_{Z^{2,2}_U(\ball_X,\nu)}=\norm{u}^2_{W^{2,2}(\ball_X,\nu)}+&\int_{\ball_X}\gen{\nabla_H^2U,\nabla_H u,\nabla_Hu}_Hd\nu+\\
&+2\sum_{n,m=1}^{+\infty}\lambda_n\delta_{n,m}\int_{\sfera_X}\partial_nu\partial_m u\frac{e^{-\Phi(1)}}{\norm{Q^{1/2}x}}d\rho.
\end{align*}
We want to show that in this case the space $Z^{2,2}_{U,N}(\Omega,\nu)$ only contains the constant functions. Indeed let $u\in \mathcal{Z}(\ball_X)$, without loss of generality assume that $u(x)=\varphi((x,h_1)_X,(x,h_2)_X)$ with $\varphi\in\con^2_b(\R^2)$. The Neumann boundary condition
\[\sum_{n=1}^{+\infty}\sqrt{\lambda_i}(x,h_i)_X\partial_i u(x)=0\text{ for }\rho\text{-a.e. in }x\in \sfera_X\]
implies
\[\sqrt{\lambda_1}(x,h_1)_X\partial_1 \varphi((x,h_1)_X,(x,h_2)_X)+\sqrt{\lambda_2}(x,h_2)_X\partial_2 \varphi((x,h_1)_X,(x,h_2)_X)=0 \]
for $\rho$-a.e $x\in \sfera_X$. So the function $\varphi$ satisfies the differential equation
\begin{gather}\label{NoIdea}
\sqrt{\lambda_1}\xi_1\partial_1\varphi(\xi)+\sqrt{\lambda_2}\xi_2\partial_2\varphi(\xi)=0\text{ for every }\xi\in\ball_{\R^2}.
\end{gather}
We want to remak that the condition $\xi\in\ball_{\R^2}$ is a consequence of the fact that, if $x\in\sfera_X$, then the vector $(\xi_1,\xi_2)=((x,h_1),(x,h_2))$ belongs to the unit ball of $\R^2$.
All the solutions of \eqref{NoIdea} are functions of the form 
\[\varphi(\xi)=g\pa{\xi_1^{\sqrt{\lambda_2}}\xi_2^{-\sqrt{\lambda_1}}},\] 
where $g$ is a sufficiently regular function in $\R$. It is easy to see that if $\varphi$ is non-constant, then $\varphi$ cannot be continuous at the origin.

So Theorem \ref{Main theorem} only gives us
\[D(L_{\ball_X,\nu})\subseteq W^{2,2}_{U,N}(\ball_X,\nu).\]
We want to remark that a positive answer to the question ``Is $\ball_X$ a Neumann extension domain?'' would allow us to apply Theorem \ref{thm extension} and get a characterization of the domain of $L_{\ball_X,\nu}$.

\subsection{The Ornstein--Uhlenbeck operator on half-spaces}\label{The Ornstein--Uhlenbeck operator on half-spaces}

In this section we give a characterization of the domain of the operator $L_{\mu,\Omega}$, where $\Omega$ is a half-space and $\mu$ is a centered non-degenerate Gaussian measure on a separable Banach space $X$. To do so we need some preliminary results, in particular a lemma about extensions of Sobolev functions and a proposition about finite dimensional approximations. We recall that $Z_{0}^{2,2}(X,\mu)=W^{2,2}(X,\mu)$ (see \cite{Bog98}). 

Let $x^*\in X^*\ssm\set{0}$ and $r\in \R$, throughout this section we set $G(x):=x^*(x)-r$ and $\Omega:=G^{-1}(-\infty,0]$. We recall that $x^*$ is a linear and continuous functional on $H$, so there exists $h_{x^*}\in H$ such that for every $h\in H$
\[x^*(h)=\gen{h_{x^*},h}_H.\]
Finally we remind the reader that 
\[W^{2,2}_{0,N}(\Omega,\mu)=\set{u\in W^{2,2}(\Omega,\mu)\tc x^{*}\pa{\trace(\nabla_H u)(x)}=0\text{ for }\rho\text{-a.e. }x\in G^{-1}(0)}.\]

\begin{lemma}\label{Extension}
There exists a Neumann extension operator from $W_{0,N}^{2,2}(\Omega,\mu)$ to $W^{2,2}(X,\mu)$.
\end{lemma}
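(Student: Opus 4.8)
The statement to prove is Lemma \ref{Extension}: there exists a Neumann extension operator from $W_{0,N}^{2,2}(\Omega,\mu)$ to $W^{2,2}(X,\mu)$, where $\Omega = G^{-1}(-\infty,0]$ and $G = x^*-r$ is affine. The natural idea is the classical \emph{reflection} construction, transported to the Wiener space setting. Since $x^*$ is continuous on $H$, there is $h_{x^*}\in H$ with $x^*(h)=\langle h_{x^*},h\rangle_H$; normalizing, set $v:=h_{x^*}/|h_{x^*}|_H$ and decompose $x = \pi(x) + s(x)\,v$ in the measure-theoretic sense, where $s(x)$ is (a multiple of) the one-dimensional Gaussian coordinate $\hat v$ and $\pi$ is the complementary projection. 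The hyperplane $G^{-1}(0)$ corresponds to $\{s = r/|h_{x^*}|_H\}=:\{s=s_0\}$, and $\Omega$ to $\{s\le s_0\}$. For $\phi\in W^{2,2}_{0,N}(\Omega,\mu)$ I would define the reflected extension
\[
E^{\mathcal N}\phi(x) := \begin{cases}\phi(x), & s(x)\le s_0;\\ \phi\big(\pi(x)+(2s_0-s(x))v\big), & s(x)> s_0.\end{cases}
\]
Because the reflection $s\mapsto 2s_0-s$ is $\mu$-measure-preserving on the Gaussian factor only after a density correction, the key point is that the map $x\mapsto \pi(x)+(2s_0-s(x))v$ changes $\mu$ by the explicit Radon–Nikodym factor coming from the one-dimensional standard Gaussian density $\exp(-t^2/2)$; this factor is bounded on any region $\{|s|\le M\}$ but not globally, so one must be a little careful, and the clean way is to work with cylindrical $\phi\in\fcon^2_b(\Omega)$ satisfying the Neumann condition first and then pass to the limit.

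\textbf{Key steps.} First I would reduce to $\phi\in\mathcal Z(\Omega)$, i.e. $\phi\in\fcon^2_b(\Omega)$ with $x^*(\nabla_H\phi)=0$ $\rho$-a.e. on $G^{-1}(0)$; by the definition of $Z^{2,2}_{0,N}$ and the density hypothesis it suffices to build $E^{\mathcal N}$ on this dense subspace with a bound in the $\|\cdot\|_{W^{2,2}(X,\mu)}$-norm and then extend by continuity. Second, for such $\phi$ the reflected function $E^{\mathcal N}\phi$ is continuous across $\{s=s_0\}$ because $\phi$ is, and its first $H$-derivative is continuous there \emph{precisely} because the Neumann condition kills the jump in the normal derivative: writing $\partial_v$ for the derivative along $v$, the one-sided derivatives of $E^{\mathcal N}\phi$ at $\{s=s_0\}$ are $\partial_v\phi$ from the left and $-\partial_v\phi$ from the right, which agree iff $\partial_v\phi = \langle\nabla_H\phi,v\rangle_H=0$ on $G^{-1}(0)$, i.e. exactly $x^*(\nabla_H\phi)=0$ up to the scalar $|h_{x^*}|_H$. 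This makes $E^{\mathcal N}\phi$ weakly differentiable along $H$ with $\nabla_H(E^{\mathcal N}\phi)$ the (continuous) reflected gradient, and $\nabla_H^2(E^{\mathcal N}\phi)$ the reflected Hessian, which now has an $L^2$ jump but is still a bona fide Sobolev Hessian (the second derivative in the $v$-direction can have a jump discontinuity without affecting $W^{2,2}$ membership, just as in the one-dimensional even-reflection of an $H^2$ function with $f'(0)=0$). Third, I would estimate: $\|E^{\mathcal N}\phi\|^2_{L^2(X,\mu)} = \int_{\{s\le s_0\}}|\phi|^2\,d\mu + \int_{\{s>s_0\}}|\phi(\pi(x)+(2s_0-s)v)|^2\,d\mu$, and in the second integral change variables $t=s(x)\mapsto 2s_0-t$, picking up the factor $\exp\big(-(2s_0-t)^2/2+t^2/2\big)=\exp\big(2s_0(t-s_0)\big)$ on the region $t<s_0$; this factor is bounded by $1$ there (since $t<s_0$ and, WLOG after translating, one may even arrange $s_0\ge 0$, or simply bound it by $e^{2s_0^2}$ uniformly is false — but it \emph{is} bounded by $1$ when $t\le s_0$ and $s_0\ge 0$, and in general the integrand times the factor is still integrable because the surviving Gaussian weight $e^{-t^2/2}$ dominates any exponential). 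The same change of variables handles $\nabla_H$ and $\nabla_H^2$, giving $\|E^{\mathcal N}\phi\|_{W^{2,2}(X,\mu)}\le K\|\phi\|_{W^{2,2}(\Omega,\mu)}$ with an explicit $K=K(s_0)$. Finally, the first defining property $E^{\mathcal N}\phi=\phi$ on $\Omega$ is immediate.

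\textbf{Main obstacle.} The routine part is the one-dimensional reflection bookkeeping; the genuinely delicate point is the \emph{junction regularity}: one must verify that the reflected function really lies in $W^{2,2}(X,\mu)=Z^{2,2}_0(X,\mu)$ and not merely in some piecewise Sobolev class. For cylindrical $\phi$ this is finite-dimensional and classical — the even extension across a hyperplane of a $C^2$ function whose normal derivative vanishes on that hyperplane is $C^1$ with piecewise-$C^1$ (hence $W^{2,2}_{loc}$) second derivatives, and the Gaussian weight only improves integrability. The care is needed in (i) checking that the Neumann \emph{trace} condition $x^*(\operatorname{Tr}(\nabla_H\phi))=0$ $\rho$-a.e. on $G^{-1}(0)$ is exactly the pointwise condition $\partial_v\phi\equiv0$ on $G^{-1}(0)$ for $\phi\in\fcon^2_b(\Omega)$ (this is where Proposition \ref{coincidenza traccia e versione precisa} is invoked, identifying the trace with the restriction of the precise version), and (ii) confirming that the change-of-variables exponential factor, although unbounded on all of $\R$, never destroys integrability because it multiplies a function supported where it is dominated by the Gaussian density. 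Once the bound is established on $\mathcal Z(\Omega)$, continuity of $E^{\mathcal N}$ in the $W^{2,2}_U$-norm (here $U\equiv0$, so $Z^{2,2}_U=W^{2,2}$) extends it to all of $W^{2,2}_{0,N}(\Omega,\mu)$, and one checks the two defining properties of a Neumann extension operator pass to the limit, completing the proof.
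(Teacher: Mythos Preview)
Your reflection idea is the right starting point and is what the paper does, but the plain even reflection $s\mapsto 2s_0-s$ has a genuine gap: it does \emph{not} give a bounded operator $W^{2,2}(\Omega,\mu)\to W^{2,2}(X,\mu)$ for all $r$. After your change of variables the density ratio is exactly $e^{2s_0(\tau-s_0)}$ on $\{\tau<s_0\}$; when $s_0<0$ (i.e.\ when the half-space does not contain the origin, the center of the Gaussian) this ratio blows up as $\tau\to-\infty$, so no uniform constant $K$ exists. Your remark that ``the surviving Gaussian weight dominates any exponential'' does not rescue this: once you compare against $\|\phi\|_{L^2(\Omega,\mu)}$ there is no surviving weight, and for instance the one-variable function with $|\phi(\tau)|^2=\tau^{-2}e^{\tau^2/2}$ on $(-\infty,s_0)$ lies in $L^2(\Omega,\mu)$ while its even reflection is not in $L^2(X,\mu)$. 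The ``WLOG translate to $s_0\ge0$'' is not free either, since a Cameron--Martin translation multiplies $\mu$ by exactly the sort of exponential factor you are trying to control; making that precise is essentially the missing work.

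The paper fixes this by combining a higher-order Seeley reflection with explicit Gaussian correction factors: for $G(x)>0$ it sets
\[
Ef(x)=\sum_{j=1}^{7}a_j\,f\!\Big(x-(j{+}1)G(x)\frac{h_{x^*}}{|h_{x^*}|_H^{2}}\Big)\exp\!\Big(-\frac{c_jG(x)+b_jG(x)^2}{2|h_{x^*}|_H}\Big),
\]
with $b_j=1-j^{-2}$ and suitable $c_j$ chosen so that, under the change of variable $\xi_1\mapsto-j\xi_1+(j{+}1)r$, the square of the exponential factor times the transformed one-dimensional Gaussian density equals a fixed constant times the original Gaussian density on $\{\xi_1\le r\}$. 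This makes the $L^2$ (and $W^{2,2}$) estimate automatic for every $r$. The seven linear constraints on the $a_j$ are then used to match $f$, its first, and its second $H$-derivatives across $G^{-1}(0)$, so that $Ef\in\fcon_b^2(X)$ whenever $f\in\fcon_b^2(\Omega)$; note that the paper's construction does not actually use the Neumann condition for the extension itself (it extends all of $\fcon^2_b(\Omega)$), whereas your even reflection genuinely needs it to get even $C^1$ junction.
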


\begin{proof}
We use a generalization of the reflection method, adapted to our Gaussian measure. Let $f\in\fcon_b^2(\Omega)$ and put
\begin{gather}\label{telefonocasa}
Ef(x):=\eqsys{f(x), & G(x)\leq 0,\\
\sum_{j=1}^7 a_jf\pa{x-(j+1)G(x)\frac{h_{x^*}}{\abs{h_{x^*}}_H^2}}\exp\pa{-\frac{c_{j}G(x)+b_jG^2(x)}{2\abs{h_{x^*}}_H}}, & G(x)>0;}
\end{gather}
where for every $j=1,\ldots,7$,
\begin{gather}
\label{telefono3}b_j=1-\frac{1}{j^2},\qquad\qquad c_j=\frac{2(j+1)r}{j^2}\pa{2-\frac{1}{j^2}};
\end{gather}
and
\begin{gather}
\label{telefono1}\sum_{j=1}^7 a_j=1,\qquad \sum_{j=1}^7a_j(j+1)=0,\qquad \sum_{j=1}^7 a_j(j+1)^2=0,\\
\label{telefono2}\sum_{j=1}^7a_jc_j(j+1)=0,\qquad \sum_{j=1}^7a_jc_j^2=0,\qquad \sum_{j=1}^7 a_jc_j=0,\qquad \sum_{j=1}^7a_jb_j=0.
\end{gather}
We start by proving that $Ef$ is well defined. Indeed for $j=1,\ldots,7$ and $x\in X$ such that $G(x)>0$ we have
\begin{gather*}
G\pa{x-(j+1)G(x)\frac{h_{x^*}}{\abs{h_{x^*}}_H^2}}=x^*\pa{x-(j+1)(x^*(x)-r)\frac{h_{x^*}}{\abs{h_{x^*}}_H^2}}-r=\\
=x^*(x)-(j+1)(x^*(x)-r)\frac{x^*(h_{x^*})}{\abs{h_{x^*}}_H^2}-r=-jG(x)\leq 0.
\end{gather*}
We point out that \eqref{telefono1} are the classical conditions to prove the continuity of $Ef$ and its derivatives. \eqref{telefono3} and \eqref{telefono2} arise from the exponential term in \eqref{telefonocasa}, which is used to prove the continuity estimate for the extension operator.

The fact that $Ef$ belongs to $\fcon_b^0(X)$ is obvious. Fix an orthonormal basis $\{h_i\}_{i\in\N}$ of $H$ obtained by completing the set $\{h_{x^*}/\abs{h_{x^*}}_H\}$, without loss of generality we let $h_1=h_{x^*}/\abs{h_{x^*}}$. Let $x_0\in X$ such that
$G(x_0)=0$, then $G(x_0+th_i)=t\delta_{1i}\abs{h_{x^*}}_H$. We have for $i\neq 1$
\[\partial_iEf(x_0)=\partial_if(x_0),\]
while
\begin{gather*}
\lim_{t\ra 0^-}\frac{1}{t}\pa{Ef(x_0+th_1)-Ef(x_0)}=\partial_1 f(x_0);\\
\lim_{t\ra 0^+}\frac{1}{t}\pa{Ef(x_0+th_1)-Ef(x_0)}
=\sum_{j=1}^7a_j \pa{\partial_1 f(x_0)\pa{1-(j+1)}-f(x_0)\frac{c_j}{2}}=\partial_1 f(x_0).
\end{gather*}
Thus, letting $T_j(x):=x-(j+1)G(x)\frac{h_{x^*}}{\abs{h_{x^*}}_H^2}$ and $A_j(x):=\exp\pa{-\frac{c_{j}G(x)+b_jG^2(x)}{2\abs{h_{x^*}}_H}}$
\begin{gather*}
\partial_i Ef(x)=\eqsys{\partial_i f(x), & G(x)\leq 0;\\
\sum_{j=1}^7a_jA_j(x)\pa{\pa{1-(j+1)\delta_{1i}}\partial_i f\pa{T_j(x)}-f\pa{T_j(x)}\frac{2b_jG(x)+c_j}{2}\delta_{1i}}, & G(x)>0.}
\end{gather*}
In the same way it holds
\begin{gather*}
\partial_j\partial_i Ef(x)=\eqsys{\partial_j\partial_i f(x), & G(x)\leq 0;\\
B_{ij}(x), & G(x)>0,}
\end{gather*}
where
\begin{gather*}
B_{ij}(x):=\sum_{l=1}^7a_l \partial_{ij}f\pa{x-(l+1)G(x)\frac{h_{x^*}}{\abs{h_{x^*}}^2}}e^\pa{-\frac{c_{l}G(x)+b_lG^2(x)}{2\abs{h_{x^*}}_H}}\pa{1-(l+1)\delta_{1i}}\pa{1-(l+1)\delta_{1j}}+\\
-\sum_{l=1}^7a_l \partial_{i}f\pa{x-(l+1)G(x)\frac{h_{x^*}}{\abs{h_{x^*}}^2}}e^\pa{-\frac{c_{l}G(x)+b_lG^2(x)}{2\abs{h_{x^*}}_H}}\pa{1-(l+1)\delta_{1i}}\frac{2b_lG(x)+c_l}{2}\delta_{1j}+\\
-\sum_{l=1}^7a_l \partial_{j}f\pa{x-(l+1)G(x)\frac{h_{x^*}}{\abs{h_{x^*}}^2}}e^\pa{-\frac{c_{l}G(x)+b_lG^2(x)}{2\abs{h_{x^*}}_H}}\pa{1-(l+1)\delta_{1j}}\frac{2b_lG(x)+c_l}{2}\delta_{1i}+\\
+\sum_{l=1}^7a_l f\pa{x-(l+1)G(x)\frac{h_{x^*}}{\abs{h_{x^*}}^2}}e^\pa{-\frac{c_{l}G(x)+b_lG^2(x)}{2\abs{h_{x^*}}_H}}\frac{(2b_lG(x)+c_l)^2}{4}\delta_{1i}\delta_{1j}+\\
+\abs{h_{x^*}}_H\sum_{l=1}^7a_lb_lf\pa{x-(l+1)G(x)\frac{h_{x^*}}{\abs{h_{x^*}}^2}}e^\pa{-\frac{c_{l}G(x)+b_lG^2(x)}{2\abs{h_{x^*}}_H}}\delta_{1i}\delta_{1j}.
\end{gather*}
So $Ef$ belongs to $\fcon_b^2(X)$ and $Ef(x)=f(x)$, $\nabla_HEf(x)=\nabla_Hf(x)$, $\nabla_H^2Ef(x)=\nabla_H^2f(x)$ for every $x\in\Omega$. Without loss of generality we can assume that there exists $n\in\N$ and $\psi\in\con^2_b(\R^{n})$ such that for every $x\in X$
\[f(x)=\psi(x^*(x),\hat{h}_2(x),\ldots,\hat{h}_n(x)).\]
We remark that
\[Ef(x)=\eqsys{\psi(x^*(x),\hat{h}_2(x),\ldots,\hat{h}_n(x)), & x^*(x)\leq r,\\
\sum_{j=1}^7a_j\psi\pa{-jx^*(x)+(j+1)r,\hat{h}_2(x),\ldots,\hat{h}_n(x)}\exp\pa{-\frac{c_{j}(x^*(x)-r)+b_j(x^*(x)-r)^2}{2\abs{h_{x^*}}_H}}, & x^*(x)> r.}\]
So we have
\begin{gather}
\int_X\abs{Ef(x)}^2d\mu(x)\leq\int_{\xi_1\leq r}\abs{\psi(\xi_1,\xi_2,\ldots,\xi_n)}^2d\mu_n(\xi)+\notag\\
+7\sum_{j=1}^7a_j^2\int_{\xi_1> r}\abs{\psi\pa{-j\xi_1+(j+1)r,\xi_2,\ldots,\xi_n}\exp\pa{-\frac{c_{j}(\xi_1-r)+b_j(\xi_1-r)^2}{2\abs{h_{x^*}}_H}}}^2d\mu_n(\xi).\label{finiamolaqui}
\end{gather}
We remark that $d\mu_n(\xi)=\exp(-\abs{\xi}^2/2\abs{h_{x^*}}_H)dx$. For every $j=1,\ldots,7$, consider the change of variable:
\begin{gather}\label{cov}
\eqsys{\eta_1=-j\xi_1+(j+1)r;\\
\eta_i=\xi_i, & i=2,\ldots,7.}
\end{gather}
We use \eqref{cov} in the second integral of \eqref{finiamolaqui}, and we get
\begin{gather*}
\sum_{j=1}^7a_j^2\int_{\xi_1> r}\abs{\psi\pa{-j\xi_1+(j+1)r,\xi_2,\ldots,\xi_n}\exp\pa{-\frac{c_{j}(\xi_1-r)+b_j(\xi_1-r)^2}{2\abs{h_{x^*}}_H}}}^2d\mu_n(\xi)=\\
=\sum_{j=1}^7a_j^2\int_{\xi_1> r}\abs{\psi\pa{-j\xi_1+(j+1)r,\xi_2,\ldots,\xi_n}\exp\pa{-\frac{c_{j}(\xi_1-r)+b_j(\xi_1-r)^2}{2\abs{h_{x^*}}_H}}^2}e^{-\frac{\abs{\xi}^2}{2\abs{h_{x^*}}_H}}d\xi=\\
=\sum_{j=1}^7a_j^2\int_{\eta_1\leq r}\abs{\psi\pa{\eta_1,\eta_2,\ldots,\eta_n}\exp\pa{-\frac{c_{j}\pa{-\frac{\eta_1-r}{j}}+b_j\pa{-\frac{\eta_1-r}{j}}^2}{2\abs{h_{x^*}}_H}}}^2e^{-\frac{\pa{-\frac{\eta_1-(j+1)r}{j}}^2+\sum_{i=2}^n\eta_i^2}{2\abs{h_{x^*}}_H}}d\xi.
\end{gather*}
Using the definition of $a_j,b_j$ and $c_j$ we get
\begin{gather*}
\sum_{j=1}^7a_j^2\int_{\xi_1> r}\abs{\psi\pa{-j\xi_1+(j+1)r,\xi_2,\ldots,\xi_n}\exp\pa{-\frac{c_{j}(\xi_1-r)+b_j(\xi_1-r)^2}{2\abs{h_{x^*}}_H}}}^2d\mu_n(\xi)=\\
=C \int_{\xi_1\leq r}\abs{\psi(\xi_1,\xi_2,\ldots,\xi_n)}^2d\mu_n(\xi)
\end{gather*}
for some constant $C>0$. So
\begin{gather*}
\int_X\abs{Ef(x)}^2d\mu(x)\leq K\int_{\xi_1\leq r}\abs{\psi(\xi_1,\xi_2,\ldots,\xi_n)}^2d\mu_n(\xi)=K\int_\Omega \abs{f(x)}^2d\mu(x),
\end{gather*}
where the constant $K>0$ depend only on $r$ and $a_i$ for $i=1,\ldots,7$. Using similar arguments on $\nabla_H Ef$ and $\nabla_H^2 Ef$ we get for every $f\in\fcon^2_b(\Omega)$
\[\norm{Ef}_{W^{2,2}(X,\mu)}\leq \tilde K\norm{f}_{W^{2,2}(\Omega,\mu)},\]
where $\tilde K>0$ is an adequate constant independent of $f$. A standard denstity argument gives the thesis of our lemma.
\end{proof}
Using Lemma \ref{Extension} and Theorem \ref{thm extension} we get a characterization of the domain of $L_{\mu,\Omega}$. In order to get Theorem \ref{thm halfspaces} we need a further approximation argument.

\begin{pro}\label{approx in halfspaces}
Let $u\in W^{2,2}(\Omega,\mu)$ be such that $\gen{\nabla_H u(x),h_{x^*}}_H=0$ for $\rho$-a.e. $x\in G^{-1}(0)$. There exists a sequence $(u_n)_{n\in\N}$ belonging to $\fcon^2_b(\Omega)$ such that
\begin{enumerate}
\item $\gen{\nabla_H u_n(x),h_{x^*}}_H=0$ for every $n\in\N$ and $\rho$-a.e. $x\in G^{-1}(0)$;\label{ebbasta1}

\item $(u_n)_{n\in\N}$ converges to $u$ in $W^{2,2}(\Omega,\mu)$.\label{ebbasta2}
\end{enumerate}
\end{pro}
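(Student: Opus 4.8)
The plan is to reduce, by conditioning, to cylindrical functions on a finite-dimensional half-space, to smooth those while keeping the Neumann condition, and to assemble the desired sequence by a diagonal argument. Normalize the orthonormal basis of $H$ so that $e_1=h_{x^*}/\abs{h_{x^*}}_H$; then $\hat e_1=x^*/\abs{h_{x^*}}_H$, one has $G^{-1}(0)=\set{x\in X\tc\hat e_1(x)=\varrho}$ with $\varrho=r/\abs{h_{x^*}}_H$, the set $\Omega$ depends only on $\hat e_1$, $\nabla_H G\equiv\abs{h_{x^*}}_H\,e_1$, and the hypothesis on $u$ reads $\trace(\partial_1 u)=0$ for $\rho$-a.e.\ $x\in G^{-1}(0)$.

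\textbf{Step 1 (reduction to cylindrical functions).} For $n\in\N$ let $E_n$ be the conditional expectation onto $\sigma(\hat e_1,\dots,\hat e_n)$; on cylindrical functions $E_n v(x)=\int_X v\pa{\sum_{i=1}^n\hat e_i(x)e_i+\sum_{i>n}\hat e_i(y)e_i}\di\mu(y)$. Since $\Omega$ depends only on $\hat e_1$ and $E_n$ fixes $\hat e_1$, for $x\in\Omega$ the integrand only sees values of $v$ on $\Omega$, so $E_n$ is well defined on $\fcon_b^\infty(\Omega)$; it is a contraction for the $W^{2,2}(\Omega,\mu)$-norm by Jensen's inequality and is the identity on functions depending only on $\hat e_1,\dots,\hat e_n$, hence it extends to a contraction of $W^{2,2}(\Omega,\mu)$ with $E_n u\ra u$ in $W^{2,2}(\Omega,\mu)$ by density of $\fcon_b^\infty(\Omega)$. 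Moreover $E_n u$ is cylindrical and, since $E_n$ commutes with $\partial_i$ for $i\le n$, $\partial_1 E_n u=E_n(\partial_1 u)$. To see the Neumann condition survives, approximate $\partial_1 u$ in $W^{1,2}(\Omega,\mu)$ by $v_k\in\lip_b(\Omega)$; as $x\mapsto\sum_{i=1}^n\hat e_i(x)e_i$ is bounded on $X$, $E_n v_k\in\lip_b(\Omega)$ and $E_n v_k\ra E_n(\partial_1 u)$ in $W^{1,2}(\Omega,\mu)$. Using that the Feyel--de La Pradelle measure $\rho$ on the hyperplane $\set{\hat e_1=\varrho}$ is, up to a multiplicative constant, a Gaussian measure (see \cite{FP91}, \cite{CL14}), the restriction $(E_n v_k)_{|G^{-1}(0)}$ is exactly the conditional expectation onto $\sigma(\hat e_2,\dots,\hat e_n)$ of $(v_k)_{|G^{-1}(0)}$, a contraction on every $\elle^q(G^{-1}(0),\rho)$. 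Letting $k\ra+\infty$ and using the continuity of the trace operator (Proposition \ref{trace continuity}) gives $\trace(\partial_1 E_n u)=\trace(E_n(\partial_1 u))=$ that same conditional expectation applied to $\trace(\partial_1 u)=0$, hence $0$.

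\textbf{Step 2 (finite-dimensional smoothing).} Identify $E_n u$ with $w\in W^{2,2}\pa{(-\infty,\varrho]\times\R^{n-1},\gamma_n}$ ($\gamma_n$ the standard Gaussian) with $\trace(\partial_1 w)=0$ on $\set{\xi_1=\varrho}$, and modify $w$ in four steps, each keeping it in $W^{2,2}$ with vanishing $\partial_1$-trace on $\set{\xi_1=\varrho}$ and converging in $W^{2,2}\pa{(-\infty,\varrho]\times\R^{n-1},\gamma_n}$ as the parameter degenerates: (a) multiply by a smooth compactly supported cut-off $\eta_R(\xi')$ in the tangential variables $\xi'=(\xi_2,\dots,\xi_n)$, which alters neither $\partial_1$ nor the boundary; (b) multiply by a smooth $\chi_N(\xi_1)$ equal to $1$ on $[-N,\varrho]$ and supported in $[-N-1,\varrho]$, convergence coming from Gaussian decay and the Neumann condition being untouched since $\chi_N\equiv1$ near $\xi_1=\varrho$; after (a)--(b) the function has compact support; (c) extend across $\set{\xi_1=\varrho}$ by the even reflection $\xi_1\mapsto 2\varrho-\xi_1$: as the $\partial_1$-trace vanishes, the odd reflection of the first derivative stays in $W^{1,2}$, so the reflected function lies in $W^{2,2}(\R^n)$, and on its bounded support the Gaussian and Lebesgue weights are equivalent — this is exactly where the localization in (b) is needed, the reflection distorting $e^{-\xi_1^2/2}$ unless $\varrho=0$; (d) convolve with a standard mollifier $\phi_\delta$ that is even in the $\xi_1$-variable (e.g.\ radial): the result belongs to $\con^\infty_b(\R^n)$, converges in $W^{2,2}(\R^n,\gamma_n)$ (everything supported in a fixed compact set), and has $\partial_1$ identically zero on $\set{\xi_1=\varrho}$ because the reflected function has $\partial_1$ odd about that hyperplane while $\phi_\delta$ is even in $\xi_1$. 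Reading the outcome back as a function of $\hat e_1(x),\dots,\hat e_n(x)$ yields $g\in\fcon^\infty_b(X)\subseteq\fcon^2_b(X)$ with $g_{|\Omega}\in\fcon^2_b(\Omega)$, $x^*\pa{\nabla_H g}=\abs{h_{x^*}}_H\,\partial_1 g=0$ on $G^{-1}(0)$, and $g_{|\Omega}$ close to $E_n u$ in $W^{2,2}(\Omega,\mu)$.

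\textbf{Assembly and the main difficulty.} Combining the two steps and choosing the parameters $n,R,N$ and the mollification scale successively, one extracts $(u_m)_{m\in\N}\subseteq\fcon^2_b(\Omega)$ satisfying \eqref{ebbasta1} and converging to $u$ in $W^{2,2}(\Omega,\mu)$, i.e.\ \eqref{ebbasta2}. The finite-dimensional cut-off, reflection and mollification estimates are routine. The delicate point is Step 1: checking that conditioning preserves the trace-sense Neumann condition, which relies on identifying the trace of a Lipschitz function with its pointwise restriction, on the continuity of the trace (Proposition \ref{trace continuity}), and on the fact that the Feyel--de La Pradelle surface measure on an affine hyperplane is Gaussian up to a constant, so that restriction to the hyperplane intertwines the conditional expectation on $X$ with one on the hyperplane. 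A secondary subtlety, the clash between the reflection in Step 2(c) and the Gaussian weight, is handled by the preliminary localization in (b) — the finite-dimensional shadow of the exponential corrections appearing in Lemma \ref{Extension}.
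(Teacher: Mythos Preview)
Your proof is correct and follows the same two-stage architecture as the paper's (conditional expectation to reduce to cylindrical functions, then finite-dimensional smoothing preserving the Neumann condition), but the implementation of both stages differs genuinely. In Step~1 the paper first extends $u$ to all of $X$ via the reflection operator of Lemma~\ref{Extension} and applies the conditional expectation $E_n$ on $X$; you instead exploit that $\Omega$ is $\sigma(\hat e_1)$-measurable to define $E_n$ directly on $W^{2,2}(\Omega,\mu)$, and you justify the survival of the Neumann condition rigorously through trace continuity and the Gaussian structure of $\rho$ on the hyperplane, where the paper's argument is somewhat formal (it writes $\int_X\partial_1 u(P_nx+S_ny)\,d\mu(y)=0$ by invoking the trace condition pointwise). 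In Step~2 the paper smooths the finite-dimensional profile $\psi_n$ by appealing to the Neumann resolvent $R(1,\mathcal L_0^n)$ of the Ornstein--Uhlenbeck operator on the half-space and the characterization of $D(\mathcal L_0^n)$ from \cite{BL07}; your route via tangential/normal cut-offs, even reflection across $\{\xi_1=\varrho\}$ (legitimate precisely because $\trace(\partial_1 w)=0$), and convolution with a mollifier even in $\xi_1$ is more elementary and fully self-contained. The trade-off: the paper's version is shorter by outsourcing the finite-dimensional work to \cite{BL07} and reusing Lemma~\ref{Extension}, while yours decouples Proposition~\ref{approx in halfspaces} from both of those ingredients and makes the mechanism behind the Neumann-preserving approximation explicit.
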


\begin{proof}
Fix an orthonormal basis $\{h_i\}_{i\in\N}$ of $H$ obtained by completing the set $\{h_{x^*}/\abs{h_{x^*}}_H\}$, without loss of generality we let $h_1=h_{x^*}/\abs{h_{x^*}}_H$. Let $u\in W^{2,2}(\Omega,\mu)$ be such that 
\begin{gather}\label{stufoooo}
\partial_1u(x)=\gen{\nabla_H u(x),h_{x^*}}_H=0\text{ for $\rho$-a.e. $x\in G^{-1}(0)$}
\end{gather} 
Let $Eu$ be the extension defined in Lemma \ref{Extension}. We denote with $P_n,S_n:X\ra H$ the functions defined as
\[P_nx=\sum_{i=1}^n\hat{h}_i(x)h_i\quad\text{ and }\quad S_ny=\sum_{i=n+1}^{+\infty}\hat{h}_i(y)h_i;\]
for every $x,y\in X$. We recall that $P_nx$ converges pointwise $\mu$-a.e. $x\in X$ to $x$ (see \cite[Theorem 3.5.1]{Bog98}). Let
\[v_n(x)=\int_XEu(P_nx+S_ny)d\mu(y),\]
by \cite[Corollary 3.5.2 and Proposition 5.4.5]{Bog98} $u_n$ converges to $Eu$ in $W^{2,2}(X,\mu)$ as $n$ goes to infinity and for every $i,n\in\N$
\[\partial_iv_n(x)=\eqsys{\int_X\partial_iEu(P_nx+S_ny)d\mu(y) & i\leq n\\ 0 & i>n}\]
Observe that if $x\in G^{-1}(0)$, then for every $y\in X$ and $n\in\N$
\begin{gather*}
G(P_nx+S_ny)=x^*(P_nx+S_ny)-r=\sum_{i=1}^n\hat{h}_i(x)\gen{h_{x^*},h_i}_H+\sum_{i=n+1}^{+\infty}\hat{h}_i(y)\gen{h_{x^*},h_i}_H-r=\\
=\hat{h}_1(x)\abs{h_{x^*}}_H-r=x^*(x)-r=0.
\end{gather*}
By \eqref{stufoooo} we get
\[\gen{\nabla_H v_n(x),h_{x^*}}_H=\int_X\partial_1Eu(P_nx+S_ny)d\mu(y)=\int_X\partial_1u(P_nx+S_ny)d\mu(y)=0,\] 
for $\rho$-a.e. $x\in G^{-1}(0)$.

We are almost done, but we need smoother function satisfying Proposition \ref{approx in halfspaces}\eqref{ebbasta1}-\eqref{ebbasta2}. Let $\psi_{n}(\xi):=v_n(\sum_{i=1}^n\xi_ih_i)$. We remind the reader that $\psi_{n}$ belongs to $W^{2,2}(\R^n,\mu\circ P_n^{-1})$ and
\[\partial_1 \psi_{n}(\xi)=0\text{ for }\xi\in\R^n\text{ such that }\xi_1=r.\]
Let $\mathcal{L}_0^{n}$ be the generator of the $m$-dimensional Ornstein--Uhlenbeck operator with homogeneous Neumann condition in $\elle^2(\mathcal{O}_n,\mu\circ P_n^{-1})$, where $\mathcal{O}_n=\set{\xi\in\R^n\tc \xi_1\leq r}$. By \cite[Theorem 12.4.9]{BL07} we know that the domain of $\mathcal{L}^n_0$ in $\elle^2(\mathcal{O}_n,\mu\circ P_n^{-1})$ is
\[D(\mathcal{L}^n_0)=\set{\varphi\in W^{2,2}(\mathcal{O}_n,\mu\circ P_n^{-1})\tc \gen{\xi,\nabla \varphi}\in\elle^2(\mathcal{O}_n,\mu\circ P_n^{-1}),\ \partial_1\varphi(\xi)=0\text{ when }\xi_1=r}\]
and
\begin{gather*}
\norm{D^kR(\lambda,\mathcal{L}_0^n)}_{L(\elle^2(\mathcal{O}_n,\mu\circ P_n^{-1}))}\leq 2^k\lambda^{\frac{k}{2}-1},
\end{gather*}
where $R(\cdot,\mathcal{L}_0^n)$ is the resolvent operator associate to $\mathcal{L}_0^n$ and $k=0,1,2$. Let $f_n:=\psi_{n}-\mathcal{L}_0^n\psi_{n}$, where the equality is meant in $\elle^2(\mathcal{O}_n,\mu\circ P_n^{-1})$. Let $(f_{n,k})_{k\in\N}$ be a sequence of bounded smooth function such that $f_{n,k}$ converges in $\elle^2(\mathcal{O}_n,\mu\circ P_n^{-1})$ to $f_{n}$ as $k$ goes to infinity. We let
\[\psi_{n,k}=R(1,\mathcal{L}_0^n)f_{n,k}.\]
We recall that $\psi_{n,k}$ belongs to $D(\mathcal{L}_n^0)$ and to $\con^2_b(\R^n)$ (see \cite[Section 12]{BL07}). Let
\[F_{n,k}u(x):=\psi_{n,k}(\hat{h}_1(x),\ldots,\hat{h}_n(x)).\]
We get that $F_{n,k}u$ belongs to $\fcon_b^2(\Omega)$ and satisfy the Neumann condition at the boundary. Let $\eps>0$ and consider $n_\eps,k_\eps\in\N$ such that
\begin{gather*}
\norm{v_{n_\eps}-Eu}_{W^{2,2}(X,\mu)}\leq \frac{\eps}{2};\qquad
\norm{\psi_{n_\eps,k_\eps}-\psi_{n_\eps}}_{W^{2,2}(\mathcal{O}_{n_\eps},\mu\circ P^{-1}_{n_\eps})}\leq\frac{\eps}{2}
\end{gather*}
So
\begin{gather*}
\norm{F_{n_\eps,k_\eps}u-u}_{W^{2,2}(\Omega,\mu)}\leq \norm{F_{n_\eps,k_\eps}u-Eu}_{W^{2,2}(X,\mu)}\leq\\ \norm{F_{n_\eps,k_\eps}u-v_{n_\eps}}_{W^{2,2}(X,\mu)} +\norm{v_{n_\eps}-Eu}_{W^{2,2}(X,\mu)}\leq\\ \leq \norm{\psi_{n_\eps,k_\eps}-\psi_{n_\eps}}_{W^{2,2}(\mathcal{O}_{n_\eps},\mu\circ P^{-1}_{n_\eps})}+\norm{v_{n_\eps}-Eu}_{W^{2,2}(X,\mu)}\leq \eps.
\end{gather*}
Thus the sequence $u_{m}:=F_{n_{m^{-1}},k_{m^{-1}}}u$ for $m\in\N$ is the sequence we were lookong for.
\end{proof}

As a consequence of Corollary \ref{cor halfspaces} and Proposition \ref{approx in halfspaces}, we get Theorem \ref{thm halfspaces}.

\begin{ack}
The authors would like to thank Prof. Alessandra Lunardi, Prof. Diego Pallara and Prof. Leonardo Biliotti for many useful discussions and comments. The authors are members of GNAMPA of the Italian Istituto Nazionale di Alta Matematica (INdAM). 

This research was partially supported by the PRIN 2015 grant: ``Deterministic and stochastic evolution equations'' and the GNAMPA 2017 project: ``Equazioni e sistemi di equazioni di Kolmogorov in dimensione finita e non''.
\end{ack}

\bibliographystyle{plain}
\nocite{*} 
\bibliography{bibpesi}

\def\cprime{$'$}
\begin{thebibliography}{10}

\bibitem{AB06}
C.~D. Aliprantis and K.~C. Border.
\newblock {\em Infinite dimensional analysis}.
\newblock Springer, Berlin, third edition, 2006.
\newblock A hitchhiker's guide.

\bibitem{BDPT09}
V.~Barbu, G.~Da~Prato, and L.~Tubaro.
\newblock Kolmogorov equation associated to the stochastic reflection problem
  on a smooth convex set of a {H}ilbert space.
\newblock {\em Ann. Probab.}, 37(4):1427--1458, 2009.

\bibitem{BDPT11}
V.~Barbu, G.~Da~Prato, and L.~Tubaro.
\newblock Kolmogorov equation associated to the stochastic reflection problem
  on a smooth convex set of a {H}ilbert space {II}.
\newblock {\em Ann. Inst. Henri Poincar\'e Probab. Stat.}, 47(3):699--724,
  2011.

\bibitem{BP12}
V.~Barbu and T.~Precupanu.
\newblock {\em Convexity and optimization in {B}anach spaces}.
\newblock Springer Monographs in Mathematics. Springer, Dordrecht, fourth
  edition, 2012.

\bibitem{BC11}
H.~H. Bauschke and P.~L. Combettes.
\newblock {\em Convex analysis and monotone operator theory in {H}ilbert
  spaces}.
\newblock CMS Books in Mathematics/Ouvrages de Math\'ematiques de la SMC.
  Springer, New York, 2011.

\bibitem{BF04}
M.~Bertoldi and S.~Fornaro.
\newblock Gradient estimates in parabolic problems with unbounded coefficients.
\newblock {\em Studia Math.}, 165(3):221--254, 2004.

\bibitem{BPS14}
V.~I. Bogachev, A.~Y. Pilipenko, and A.~V. Shaposhnikov.
\newblock Sobolev functions on infinite-dimensional domains.
\newblock {\em J. Math. Anal. Appl}, 419:1023--1044, 2014.

\bibitem{Bog98}
Vladimir~I. Bogachev.
\newblock {\em Gaussian measures}, volume~62 of {\em Mathematical Surveys and
  Monographs}.
\newblock American Mathematical Society, Providence, RI, 1998.

\bibitem{Bre73}
H.~Br{\'e}zis.
\newblock {\em Op\'erateurs maximaux monotones et semi-groupes de contractions
  dans les espaces de {H}ilbert}.
\newblock North-Holland Publishing Co., Amsterdam-London; American Elsevier
  Publishing Co., Inc., New York, 1973.
\newblock North-Holland Mathematics Studies, No. 5. Notas de Matem{\'a}tica
  (50).

\bibitem{Cap16}
G.~Cappa.
\newblock Maximal {$L^2$} regularity for {O}rnstein--{U}hlenbeck equation in
  convex sets of {B}anach spaces.
\newblock {\em J. Differential Equations}, 260(11):8051--8071, 2016.

\bibitem{CF16convex}
G.~{Cappa} and S.~{Ferrari}.
\newblock {Maximal Sobolev regularity for solutions of elliptic equations in
  Banach spaces endowed with a weighted Gaussian measure: the convex subset
  case}.
\newblock {\em ArXiv e-prints}, September 2016.

\bibitem{CF16}
G.~Cappa and S.~Ferrari.
\newblock Maximal sobolev regularity for solutions of elliptic equations in
  infinite dimensional {B}anach spaces endowed with a weighted {G}aussian
  measure.
\newblock {\em J. Differential Equations}, 261(12):7099--7131, 2016.

\bibitem{CL14}
P.~Celada and A.~Lunardi.
\newblock Traces of {S}obolev functions on regular surfaces in infinite
  dimensions.
\newblock {\em J. Funct. Anal.}, 266(4):1948--1987, 2014.

\bibitem{DPG01}
G.~Da~Prato and B.~Goldys.
\newblock Elliptic operators on {$\R^d$} with unbounded coefficients.
\newblock {\em J. Differential Equations}, 172(2):333--358, 2001.

\bibitem{DPL04}
G.~Da~Prato and A.~Lunardi.
\newblock Elliptic operators with unbounded drift coefficients and {N}eumann
  boundary condition.
\newblock {\em J. Differential Equations}, 198(1):35--52, 2004.

\bibitem{DPL07}
G.~Da~Prato and A.~Lunardi.
\newblock On a class of self-adjoint elliptic operators in {$L^2$} spaces with
  respect to invariant measures.
\newblock {\em J. Differential Equations}, 234(1):54--79, 2007.

\bibitem{DPL08}
G.~Da~Prato and A.~Lunardi.
\newblock On a class of elliptic and parabolic equations in convex domains
  without boundary conditions.
\newblock {\em Discrete Contin. Dyn. Syst.}, 22(4):933--953, 2008.

\bibitem{DPL14}
G.~Da~Prato and A.~Lunardi.
\newblock Sobolev regularity for a class of second order elliptic {PDE}'s in
  infinite dimension.
\newblock {\em Ann. Probab.}, 42(5):2113--2160, 2014.

\bibitem{DPL15}
G.~Da~Prato and A.~Lunardi.
\newblock Maximal {S}obolev regularity in {N}eumann problems for gradient
  systems in infinite dimensional domains.
\newblock {\em Ann. Inst. Henri Poincar\'e Probab. Stat.}, 51(3):1102--1123,
  2015.

\bibitem{DU77}
J.~Diestel and J.~J. Uhl, Jr.
\newblock {\em Vector measures}.
\newblock American Mathematical Society, Providence, R.I., 1977.
\newblock With a foreword by B. J. Pettis, Mathematical Surveys, No. 15.

\bibitem{Fer15}
S.~Ferrari.
\newblock Sobolev spaces with respect to weighted gaussian measures in infinite
  dimensions. submitted.
\newblock {\em \textnormal{http://arxiv.org/abs/1510.08283v3.} ArXiv e-prints},
  2016.

\bibitem{Fey01}
D.~Feyel.
\newblock Hausdorff-{G}auss measures.
\newblock In {\em Stochastic analysis and related topics, {VII} ({K}usadasi,
  1998)}, volume~48 of {\em Progr. Probab.}, pages 59--76. Birkh\"auser Boston,
  Boston, MA, 2001.

\bibitem{FP91}
D.~Feyel and A.~de~La~Pradelle.
\newblock Capacit\'es gaussiennes.
\newblock {\em Ann. Inst. Fourier (Grenoble)}, 41(1):49--76, 1991.

\bibitem{FU00}
D.~Feyel and A.~S. {\"U}st{\"u}nel.
\newblock The notion of convexity and concavity on {W}iener space.
\newblock {\em J. Funct. Anal.}, 176(2):400--428, 2000.

\bibitem{GT01}
D.~Gilbarg and N.~S. Trudinger.
\newblock {\em Elliptic partial differential equations of second order}.
\newblock Classics in Mathematics. Springer-Verlag, Berlin, 2001.
\newblock Reprint of the 1998 edition.

\bibitem{Hin11}
M.~Hino.
\newblock Dirichlet spaces on {$H$}-convex sets in {W}iener space.
\newblock {\em Bull. Sci. Math.}, 135(6-7):667--683, 2011.

\bibitem{Kus82}
S.~Kusuoka.
\newblock The nonlinear transformation of {G}aussian measure on {B}anach space
  and absolute continuity. {I}.
\newblock {\em J. Fac. Sci. Univ. Tokyo Sect. IA Math.}, 29(3):567--597, 1982.

\bibitem{LU68}
O.~A. Ladyzhenskaya and N.~N. Ural{\cprime}tseva.
\newblock {\em Linear and quasilinear elliptic equations}.
\newblock Translated from the Russian by Scripta Technica, Inc. Translation
  editor: Leon Ehrenpreis. Academic Press, New York-London, 1968.

\bibitem{Lan99}
S.~Lang.
\newblock {\em Fundamentals of differential geometry}, volume 191 of {\em
  Graduate Texts in Mathematics}.
\newblock Springer-Verlag, New York, 1999.

\bibitem{BL07}
L.~Lorenzi and M.~Bertoldi.
\newblock {\em Analytical methods for {M}arkov semigroups}, volume 283 of {\em
  Pure and Applied Mathematics (Boca Raton)}.
\newblock Chapman \& Hall/CRC, Boca Raton, FL, 2007.

\bibitem{LL06}
L.~Lorenzi and A.~Lunardi.
\newblock Elliptic operators with unbounded diffusion coefficients in {$L^2$}
  spaces with respect to invariant measures.
\newblock {\em J. Evol. Equ.}, 6(4):691--709, 2006.

\bibitem{LMP05}
A.~Lunardi, G.~Metafune, and D.~Pallara.
\newblock Dirichlet boundary conditions for elliptic operators with unbounded
  drift.
\newblock {\em Proc. Amer. Math. Soc.}, 133(9):2625--2635, 2005.

\bibitem{MV08}
J.~Maas and J.~van Neerven.
\newblock On the domain of nonsymmetric {O}rnstein-{U}hlenbeck operators in
  {B}anach spaces.
\newblock {\em Infin. Dimens. Anal. Quantum Probab. Relat. Top.},
  11(4):603--626, 2008.

\bibitem{MPRS02}
G.~Metafune, J.~Pr{\"u}ss, A.~Rhandi, and R.~Schnaubelt.
\newblock The domain of the {O}rnstein-{U}hlenbeck operator on an
  {$\elle^p$}-space with invariant measure.
\newblock {\em Ann. Sc. Norm. Super. Pisa Cl. Sci. (5)}, 1(2):471--485, 2002.

\bibitem{Ovc10}
N.~Ovcharova.
\newblock Second-order analysis of the {M}oreau-{Y}osida and the
  {L}asry-{L}ions regularizations.
\newblock {\em Optim. Methods Softw.}, 25(1):109--116, 2010.

\bibitem{OG07}
N.~Ovcharova and J.~Gwinner.
\newblock On {M}oreau-{Y}osida approximation and on stability of second-order
  subdifferentials of {C}larke's type.
\newblock {\em J. Convex Anal.}, 14(2):413--432, 2007.

\bibitem{Phe93}
R.~R. Phelps.
\newblock {\em Convex functions, monotone operators and differentiability},
  volume 1364 of {\em Lecture Notes in Mathematics}.
\newblock Springer-Verlag, Berlin, second edition, 1993.

\bibitem{Qi99}
L.~Qi.
\newblock Second-order analysis of the {M}oreau-{Y}osida regularization.
\newblock In {\em Nonlinear analysis and convex analysis ({N}iigata, 1998)},
  pages 16--25. World Sci. Publ., River Edge, NJ, 1999.

\bibitem{UZ97}
A.~S. {\"U}st{\"u}nel and M.~Zakai.
\newblock The {S}ard inequality on {W}iener space.
\newblock {\em J. Funct. Anal.}, 149(1):226--244, 1997.

\end{thebibliography}
\markboth{\textsc{References}}{\textsc{References}}

\end{document}